\documentclass[10pt, journal, twocolumn]{ieeetran}                                                       
                                                       % paper
\IEEEoverridecommandlockouts                              % T his command is only
                                                          % needed if you want to
                                                          % use the \thanks command
%\overrideIEEEmargins7
% See the \addtolength command later in the file to balance the column lengths
% on the last page of the document

%\def\baselinestretch{0.9999}

\usepackage{pgfplots}
\usepackage{graphicx}      % include this line if your document contain	s figures
\usepackage{amsmath,amsfonts} % Math packages
\usepackage{enumerate}
\usepackage{color}
\usepackage{amssymb}
\usepackage{mathrsfs}
\usepackage{tikz,pgfplots}
\usepackage{tabu}
\usepackage{mathtools}
\usepackage{epstopdf}

\usepackage{hyperref, balance}

\usepackage{subcaption}
%------------------------------------------------------
\newtheorem{theorem}{Theorem}
\newtheorem{definition}{Definition}
\newtheorem{proposition}{Proposition}
\newtheorem{lemma}{Lemma}
\newtheorem{corollary}{Corollary}

\newtheorem{remark}{Remark}
\newtheorem{assumption}{Assumption}

%--------------------------------------------

% Boldsymbols, mathcal, mathbb
\newcommand{\bs}{\boldsymbol}
\newcommand{\mc}{\mathcal}
\newcommand{\bb}{\mathbb}
\newcommand{\R}{\bb R}
\newcommand{\0}{\bs 0}
\newcommand{\1}{\bs 1}

%norm
\newcommand{\norm}[1]{\left\|#1\right\|}

% Operators and notations
\newcommand{\dom}{\operatorname{dom}}

\newcommand{\blue}{\textcolor{black}}
\newcommand{\argmin}{\operatorname{argmin}}

\newcommand{\fix}{\mathrm{fix}}
\newcommand{\proj}{\mathrm{proj}}
\newcommand{\prox}{\mathrm{prox}}
\newcommand{\Id}{\mathrm{Id}}

\newcommand{\diag}{\operatorname{diag}}
\newcommand{\col}{\operatorname{col}}
\newcommand{\zer}{\operatorname{zer}}

\newcommand{\nc}{\mathrm{N}}
\newcommand{\avg}{\mathrm{avg}}

%-------------------------------------------------------------------------
\title{%\LARGE \bf
Semi-decentralized generalized Nash equilibrium seeking in monotone aggregative games
}

\author{Giuseppe Belgioioso and Sergio Grammatico 
\thanks{G. Belgioioso is with the Automatic Control Laboratory, Swiss Federal Institute of Technology (ETH) Z\"{u}rich, Switzerland. S. Grammatico is with the Delft Center for Systems and Control (DCSC), TU Delft, The Netherlands. E-mail addresses: \texttt{gbelgioioso@ethz.ch}, \texttt{s.grammatico@tudelft.nl}. This work was partially supported by NWO under research projects OMEGA (613.001.702) and P2P-TALES (647.003.003), and by the ERC under research project COSMOS (802348).

%\smallskip \newline
}
}

%-------------------------------------------------------------------------
%\linespread{0.96}
\begin{document}

\maketitle
\thispagestyle{empty}
\pagestyle{empty}
%===========================================================================================
% Abstract

\begin{abstract}      
We address the generalized Nash equilibrium seeking problem for a population of agents playing aggregative games with affine coupling constraints. We focus on semi-decentralized communication architectures, where there is a central coordinator able to gather and broadcast signals of aggregative nature to the agents.
By exploiting the framework of monotone operator theory and operator splitting, we first critically review the most relevant available algorithms and then design two novel schemes: (i) a single-layer, fixed-step algorithm with convergence guarantee for general (non-cocoercive, non-strictly) monotone aggregative games and (ii) a single-layer proximal-type algorithm for a class of monotone aggregative games with linearly coupled cost functions. We also design novel accelerated variants of the algorithms via (alternating) inertial and over-relaxation steps. Finally, we show via numerical simulations that the proposed algorithms outperform those in the literature in terms of convergence speed.
\end{abstract}

%\begin{IEEEkeywords} Aggregative games, Generalized Nash equilibrium seeking, Operator splitting.
%\end{IEEEkeywords}
%===========================================================================================
\section{Introduction} \label{sec:Intro}

\subsection{Aggregative games}
\IEEEPARstart{A}{n} aggregative game is a set of coupled optimization problems, each associated with an autonomous agent, i.e., an independent decision maker, where the cost function of each agent depends on some aggregate effect of all the agents in the population \cite{kukushkin:04}, \cite{jensen:10}, \cite{cornes:12}. Namely, the aggregative feature arises whenever each agent is affected by the overall population behavior, hence not by some specific agents. In general, such a special feature is typical of incentive-based control in competitive markets \cite{ma:hu:spanos:14} and in fact  engineering applications of aggregative games span from demand side management in the smart grid \cite{chen:li:louie:vucetic:14} \cite{ye:hu:17} and charging control for plug-in electric vehicles \cite{ma2016efficient}, \cite{ma:zou:ran:shi:hiskens:16}, \cite{liu:18}, to spectrum sharing in wireless networks \cite{zhou:17} and network congestion control \cite{barrera:garcia:15}. With these motivating applications in mind, aggregative games have been receiving high research interest, within the operations research \cite{koshal:nedic:shanbhag:16} and especially the automatic control \cite{grammatico:17}, \cite{belgioioso:grammatico:17cdc}, \cite{liang:yi:hong:17}, \cite{paccagnan2018nash}, \cite{deng:liang:19}, \cite{depersis:grammatico:20} communities. Researchers have in fact studied and proposed solutions to the generalized Nash equilibrium problem (GNEP) in aggregative games, which is the problem to compute a set of decisions such that each is individually optimal given the others. Remarkably, the aggregative structure has been exploited to mitigate the computational complexity for large population size, and in fact the proposed solution algorithms are primarily non-centralized, i.e., almost (semi-) decentralized and distributed, where the computations by the agents are fully decoupled. Essentially, in semi-decentralized algorithms, the agents do not  communicate with each other, but rely on a reliable central coordinator (e.g. an aggregator) that gathers the local decisions in aggregative form and then broadcasts (incentive) signals, e.g. dual variables, to all the agents \cite{belgioioso:grammatico:17cdc}. On the other hand, in distributed algorithms, there is no central coordinator, so the agents communicate with each other to cooperatively estimate or reach consensus on the signals of common interest, e.g. dual and auxiliary variables. The latter algorithmic setup is also called partial-decision information \cite{gadjov2020single}, \cite{belgioioso2020distributedN}, because the agents do not have direct access to the aggregative effect on their cost functions, thus they should estimate it via reliable, truthful peer-to-peer communications, e.g. via cooperative consensus protocols. These features motivate us to focus on the semi-decentralized algorithmic structure in this paper.

\subsection{Literature review}

The literature on semi-decentralized GNE seeking in aggregative games is quite recent. In \cite{belgioioso:grammatico:17cdc} Belgioioso and Grammatico designed the first semi-decentralized GNE seeking algorithm for (non-strictly/strongly, non-cocoercive) \textit{monotone} aggregative games\footnote{For ease of reading, with (strict/strongly) monotone game, we mean game with (strict/strongly) monotone pseudo-subdifferential mapping (\S \ref{subsec:VA}).}, where the algorithm derivation relies on the so-called forward-backward-forward (FBF) operator splitting. In parallel, for the class of \textit{strongly monotone} games, Yi and Pavel proposed the first preconditioned forward-backward (pFB) operator splitting method \cite{yi2017distributed}, \cite{yi2019operator}, which is applicable to aggregative games with semi-decentralized algorithmic structure - as shown in \cite{belgioioso2018projected}, the outcome of the pFB method for aggregative games is in fact the so-called asymmetric project algorithm (APA) \cite[\S 12.5.1]{facchinei:pang}, proposed for aggregative games in \cite{paccagnan2018nash}. Effectively, \cite{belgioioso:grammatico:17cdc} and \cite{yi2017distributed} are the first works to adopt an elegant and general mathematical approach based on monotone operator theory \cite{bauschke2017convex} to explicitly model GNEPs, to decouple the coupling constraints via Lagrangian duality, and in turn to exploit operator splitting methods for systematically designing (non-centralized) GNE seeking algorithms. Next, we refer to some other relevant GNE seeking algorithms for or applicable to aggregative games. For a class of \textit{unconstrained} \textit{strictly monotone} games, in \cite{ye:hu:17}, Ye and Hu proposed continuous-time saddle-point dynamics. For \textit{strictly monotone} games with \textit{equality} coupling constraints, in \cite{liang:yi:hong:17}, Liang, Yi and Hong proposed continuous-time projected pseudo-gradient dynamics paired with discontinuous dynamics for dual and auxiliary variables. For \textit{unconstrained}, \textit{strongly monotone} aggregative games, in \cite{deng:liang:19}, Deng and Liang proposed continuous-time, integral consensus-based dynamics. Recently, in \cite{depersis:grammatico:20}, De Persis and Grammatico proposed continuous-time, integral dynamics for a class of \textit{strongly monotone} aggregative games.

From the literature on (semi-decentralized) GNE seeking in aggregative games, several critical issues emerge. First, the solution methods available for general (non-strictly, non-cocoercive) monotone aggregative games are limited to algorithms that require at least two demanding computations (projections) and two communications (between the agents and the coordinator) at each iteration, see e.g. the FBF \cite{belgioioso:grammatico:17cdc} and extra-gradient (EG) based methods \cite[\S 12.6.1]{facchinei:pang}; instead, computationally convenient algorithms, e.g. the pFB \cite{yi2019operator}, require strong monotonicity of the game.  Surprisingly, there is currently no single-communication-per-iteration, fixed-step algorithm for merely monotone aggregative games. For instance, the pFB method does not always converge in merely monotone games, not even under vanishing step sizes \cite{grammatico:18}. From a practical perspective, the available algorithms may require a large number of iterations, and in particular of communications between the agents and the central coordinator, to converge. For example, algorithms based on the iterative Tikhonov regularization (ITR) \cite{kannan2012distributed} require double-layer vanishing step sizes, which considerably slows down convergence. 
%In fact, some researchers are currently working on accelerated algorithms for certain classes of (so far non-generalized) NEPs \cite{tatarenko2018accelerated}.
Finally, often, the local step sizes of the algorithms are based on global properties of the game data, that however should be unknown to the local agents in practice - on the contrary, little or no coordination among agents should be necessary to set the step sizes with guaranteed convergence.

\subsection{Contribution}

In this paper, we fully exploit monotone operator theory and operator splitting methodologies to study and address the main technical and computational issues that currently afflict (semi-decentralized) GNE seeking in aggregative games. Specifically, our main contributions are summarized next:

\begin{enumerate}[1.]
\item We characterize the available (semi-decentralized) algorithms with a general operator-theoretic perspective, which allows us to establish basically the most general convergence results and draw a fair technical comparison among these algorithms (\S \ref{sec:CREA}), as well as to possibly improve convergence speed, e.g. via inertial accelerations;

\item We present the first single-layer, single-communication-per-iteration, fixed-step algorithm for (non-strictly, non-cocoercive) monotone aggregative games (\S \ref{sec:FoRB}) - essentially, the most desirable algorithmic features for the most general class of monotone aggregative games one could hope for;

\item We present a very fast, single-layer, single-communication-per-iteration, fixed-step, proximal algorithm for a class of (non-strictly) monotone aggregative games with linearly coupled cost functions (\S \ref{sec:cPPP}) - essentially, the most desirable algorithmic features with the fastest convergence ever experienced for a special, popular, class of monotone aggregative games;

\item We design an alternating inertial acceleration scheme which is applicable to some algorithms (\S \ref{sec:AI}) and that, remarkably, in some particular cases outperforms the classic inertial acceleration in terms of numerical convergence - mathematically, we prove that our alternating inertia preserves averagedness of operators, thus the convergence is desirably Fej\'er monotone.
\end{enumerate}

%---------------------------
\subsection{Notation and definitions}
\subsubsection*{Basic notation}
$\R$ denotes the set of real numbers, and $\overline{\R} := \R \cup \{\infty\}$ the set of extended real numbers. $\bs{0}$ ($\bs{1}$) denotes a matrix/vector with all elements equal to $0$ ($1$); to improve clarity, we may add the dimension of these matrices/vectors as subscript. $A \otimes B$ denotes the Kronecker product between the matrices $A$ and $B$. For a square matrix $A \in \R^{n \times n}$, it transpose is $A^\top$, $[A]_{i,j}$ represents the element on the row $i$ and column $j$. $A \succ 0$ ($\succeq 0$) stands for positive definite (semidefinite) matrix. Given $A \succ 0$, $\norm{\cdot}_A $ denotes the $A$-induced norm, such that $\norm{x}_{A}=x^\top A x$. $\left\| A \right\|$ denotes the largest singular value of $A$; \blue{$\text{eig}_{\max}(A)$ and $\text{eig}_{\min}(A)$ denote, respectively, the largest and the smallest eigenvalues of $A$.} Given $N$ scalars, $a_1,\ldots, a_N$, $\diag(a_1,\ldots, a_N)$ denotes the diagonal matrix with $a_1,\ldots, a_N$ on the main diagonal. Given $N$ vectors $x_1, \ldots, x_N \in \R^n$, $\boldsymbol{x} := \col\left(x_1,\ldots,x_N\right) = [ x_1^\top, \ldots , x_N^\top ]^\top$.

\smallskip
\subsubsection*{Operator-theoretic definitions}
$\Id(\cdot)$ denotes the identity operator. The mapping $\iota_{S}:\R^n \rightarrow \{ 0, \, \infty \}$ denotes the indicator function for the set $\mc{S} \subseteq \R^n$, i.e., $\iota_{\mc S}(x) = 0$ if $x \in \mc S$, $\infty$ otherwise. For a closed set $S \subseteq \R^n$, the mapping $\proj_{\mc S}:\R^n \rightarrow \mc S$ denotes the projection onto $\mc S$, i.e., $\proj_{\mc S}(x) = \argmin_{y \in \mc S} \left\| y - x\right\|$. The set-valued mapping $\nc_{S}: \R^n \rightrightarrows \R^n$ denotes the normal cone operator for the set $S \subseteq \R^n$, i.e., 
$\nc_{\mc S}(x) = \varnothing$ if $x \notin S$, $\left\{ v \in \R^n \mid \sup_{z \in \mc S} \, v^\top (z-x) \leq 0  \right\}$ otherwise.
For a function $\psi: \R^n \rightarrow \overline{\R}$, $\dom(\psi) := \{x \in \R^n \mid \psi(x) < \infty\}$; $\partial \psi: \dom(\psi) \rightrightarrows {\R}^n$ denotes its subdifferential set-valued mapping, defined as $\partial \psi(x) := \{ v \in \R^n \mid \psi(z) \geq \psi(x) + v^\top (z-x)  \textup{ for all } z \in {\rm dom}(\psi) \}$; $\prox_{\psi}(x) = \argmin_{y \in \R^n} \psi(y) + \frac{1}{2}\left\| y - x\right\|^2$ denotes its proximal operator. 
A set-valued mapping $\mathcal{F} : \R^n \rightrightarrows \R^n$ is
%$\ell$-Lipschitz continuous, with $\ell>0$, if $\|u-v\| \leq \ell\|x-y\|$ for all $x,y \in \R^n$, $u \in \mathcal{F} (x)$, $v \in \mathcal{F} (y)$; $\mathcal{F} $ is 
(strictly) monotone if $(u-v)^\top  (x-y) \geq (>) \, 0$ for all $x \neq y \in \R^n$, $u \in \mathcal{F} (x)$, $v \in \mathcal{F} (y)$; $\mathcal{F} $ is $\eta$-strongly monotone, with $\eta>0$, if 
$(u-v)^\top (x-y) \geq \eta \left\| x-y \right\|^2$ for all $x \neq y \in \R^n$, $u \in \mathcal{F} (x)$, $v \in \mathcal{F} (y)$.
%$\mathcal{F} $ is $\eta$-{averaged}, with $\eta \in (0,1)$, if  
%$\left\| \mathcal{F} (x) - \mathcal{F} (y) \right\|^2 \leq \left\| x-y \right\|^2 - \tfrac{1-\eta}{\eta}\left\| \left( \textup{Id}-\mathcal{F}  \right)(x) - \left(\textup{Id}-\mathcal{F}  \right)(y) \right\|^2$, for all $x, y \in \R^n$;
%$F$ is nonexpansive if $\left\| F(x) - F(y) \right\| \leq \left\| x-y \right\|^2$, for all $x, y \in \R^n$;
%$\mathcal{F} $ is $\beta$-cocoercive, with $\beta>0$, if $\beta \mathcal{F} $ is $\tfrac{1}{2}$-averaged.
${\rm J}_{\mathcal{F} }:=(\Id + \mathcal{F} )^{-1}$ denotes the resolvent operator of $\mathcal{F} $
%, which is $\tfrac{1}{2}$-averaged if and only if $\mathcal{F} $ is monotone
; $\fix\left( \mathcal{F}\right) := \left\{ x \in \R^n \mid x \in \mathcal{F}(x) \right\}$ and $\zer\left( \mathcal{F}\right) := \left\{ x \in \R^n \mid 0 \in \mathcal{F}(x) \right\}$ denote the set of fixed points and of zeros, respectively.

%===========================================================================================
\section{The generalized Nash equilibrium problem in aggregative games} \label{sec:PS}
\subsection{Problem statement}
We consider a set of $N$ agents, where each agent $i \in \mc I := \{1,\ldots, N \}$ shall choose its decision variable (i.e., strategy) $x_i$ from the local decision set $\Omega_i \subseteq \mathbb{R}^n$ with the aim of minimizing its local cost function $J_i\left( x_i, \bs{x}_{-i} \right)$, which depends on the local variable $x_i$ (first argument) and on the decision variables of the other agents, $\bs{x}_{-i} := \col\left( \{ x_j \}_{j\in \mc I\backslash \{ i \}} \right) \in \R^{n(N-1)}$ (second argument).

\smallskip
In this paper, we focus on the class of \textit{aggregative games}, where the cost function of each agent depends on the local decision variable and on the value of the aggregation, i.e.,
%function $\avg: \boldsymbol{\Omega} \rightarrow \frac{1}{N} \sum_{j=1}^N \Omega_j \subseteq \mathbb{R}^n$, with $\boldsymbol{\Omega} := \Omega_1 \times \ldots \times \Omega_N$. In particular, we consider \textit{average aggregative games}, where the aggregation function is the average function, i.e., 
\begin{equation} \label{eq:sigma} \textstyle
\avg(\boldsymbol{x}) := 
%(\frac{1}{N}\mathbf{1}^\top_N \otimes I_n) \bs x
\frac{1}{N}\sum_{i=1}^{N} x_i.
\end{equation}
Specifically, we consider local cost functions of the form
\begin{equation} \label{eq:CFi} \textstyle
J_i(x_i, \bs x_{-i}) := \textstyle  g_i(x_i) + f_i \left( x_i, \avg(\boldsymbol{x}) \right),
\end{equation}
where $g_i$ and $f_i$ satisfy the following assumptions.

\smallskip
\begin{assumption} \label{ass:CFn}
For each $i\in \mc I$, the function $g_i$ is continuous (possibly non-differentiable) and convex, and $f_i(\, \cdot \,,\frac{1}{N}\cdot + \, y)$ is continuously differentiable and convex, for any $y \in \R^n$.
{\hfill $\square$}
\end{assumption}

\smallskip
Cost functions as in \eqref{eq:CFi} are the most general considered in the literature of monotone games \cite[Rem.~1]{yi2018distributed}, \cite[\S~12]{palomar2010convex}.

\smallskip
Furthermore, we consider \textit{generalized games}, where the coupling among the agents arises not only via the cost functions, but also via their feasible decision sets. In our setup, the coupling constraints are described by an affine function, $\bs{x} \mapsto A \bs x - b$, where $A:= \left[ A_1| \ldots| A_N \right] \in \R^{m \times nN}$, $b := \sum_{i =1}^N b_i \in \R^m$. Thus, the global feasible set reads as
\begin{equation} 
\label{eq:calX}
\bs{\mc X}\:= \left( \prod_{i \in \mc I} \Omega_i \right) \bigcap \left\{ \bs x \in \R^{nN} | \, A \bs x - b \leq \boldsymbol{0}_m \right\} \subseteq \R^{nN};
\end{equation}
while the feasible decision set of each agent $i \in \mc I$ is characterized by the set-valued mapping $\mc X_i$, defined as
$$ \textstyle
\mc X_i(\bs x_{-i}) := \big\{ y_i \in \Omega_i | \, A_i y_i -b_i \leq \sum_{j \neq i}^N (b_j-A_j x_j) \big\},
$$
where $A_i \in \R^{m \times n}$ and $b_i$ are local parameters that define how agent $i$ is involved in the coupling constraints.

\smallskip
\begin{remark}[Affine constraints] Affine coupling constraints, as considered in \eqref{eq:calX}, are the most common in the literature of monotone games, see for example \cite{grammatico:17}, \cite{liang:yi:hong:17}, \cite{paccagnan2018nash}, \cite{yi2019operator}.
\blue{For the sake of compactness, we did not include coupling equality constraints in \eqref{eq:calX}. However, all the results in the remainder of the paper can be straightforwardly adapted to cover this case.}

{\hfill $\square$}
\end{remark}

\smallskip
Next, let us formalize standard convexity and closedness assumptions for the constraint sets.

\smallskip
\begin{assumption} \label{ass:CCF}
For each $i\in \mc I$, the local set $\Omega_i \subseteq \R^n$ is nonempty, closed and convex. Moreover, the global set $\bs{\mc X}$ satisfies Slater's constraint qualification.
{\hfill $\square$}
\end{assumption}
\smallskip

In summary, the aim of each agent $i$, given the aggregate decision $\avg(\bs x)$, is to choose a strategy, $x_i^*$, that solves its local convex optimization problem according to the game setup previously described, i.e.,  for all $i \in \mc I$
\begin{align}\label{eq:Game}
%x_i^* \in 
\left\{
\begin{array}{c l}
\underset{x_i \in \, \R^n}{\argmin}&
J_i \big( x_i, \bs x_{-i} \big) 
=
g_i(x_i) + f_i \left( x_i, \avg(\boldsymbol{x}) \right)
\\
\text{ s.t. }   &
 x_i \in \Omega_i\\
&   A_i x_i \leq b_i +\sum_{j \neq i}^N (b_j- A_j x_j)
\end{array} 
\right. 
\end{align}
where the last constraint is equivalent to $A \bs x - b \leq \0$.
From a game-theoretic perspective, we consider the problem to compute a Nash equilibrium \cite{facchinei2010generalized}, as formalized next.

\smallskip 
\begin{definition}[Generalized $\varepsilon-$Nash equilibrium]
A collective strategy $\bs x^*\in \bs{\mc X}$ is a generalized $\varepsilon-$Nash equilibrium ($\varepsilon-$GNE) of the game in \eqref{eq:Game} if, for all $i\in \mc I$:
\begin{align} \label{eq:eps_GNE}
% \label{eq:GNE}
J_i\left( x^*_{i}, \bs x^*_{-i} \right) \leq 
\inf\left\{ J_{i}(y, \, \bs x^*_{-i}) + \varepsilon \, \mid \, y \in \mc X_i(\bs x^*_{-i}) \right\}.
\end{align}
If \eqref{eq:eps_GNE} holds with $\varepsilon = 0$, then $\bs x^*$ is a GNE.
%{\vspace*{-2.4em \hfill $\square$}}
{\hfill $\square$}
\end{definition}

\smallskip
In other words, a set of strategies is a Nash equilibrium if no agent can improve its objective function by unilaterally changing its strategy to another feasible one.
%The problem to find such a set of strategies is known as generalized Nash equilibrium problem (GNEP). 

\smallskip
\begin{remark}[Existence of a GNE]
\label{rem:ExGNE}
If Assumption \ref{ass:CCF} holds with bounded local strategy sets $\Omega_i$'s, the existence of a GNE follows from Brouwer's fixed-point theorem \cite[Prop.~12.7]{palomar2010convex}, while uniqueness does not hold in general.
{ \hfill $\square$}
\end{remark}

\subsection{Nash vs Aggregative (or Wardrop) Equilibria}

In aggregative games with cost functions as in \eqref{eq:CFi}, the condition in \eqref{eq:eps_GNE} specializes as: for all $i \in \mc I$ and $y \in \mc X_i(\bs x^*_{-i}) $
\begin{align*} %\label{eq:eps_GNE_agg}
% \label{eq:GNE}
g_i(x_i^*) + f_i\left( x^*_{i}, \avg(\bs x^*) \right) \leq 
 g_{i}(y) 
\textstyle
+f_i \big( y, \, \frac{1}{N} y +
\frac{1}{N} \sum_{j\neq i}^N x_j^* \big),
\end{align*}
where the decision variable of agent $i$, i.e., $x_i^*$, appears also in the second argument of $f_i$, since $x_i^*$ contributes to form the average strategy, i.e., $\avg(\bs x^*) = \frac{1}{N} x_i^* + \frac{1}{N} \sum_{j\neq i}^N x_j^* $.

The concept of \textit{aggregative (or Wardrop) equilibrium} (formalized in Definition \ref{def:GAE}) springs from the intuition that the contribution of each agent to the average strategy decreases as the population size grows. 
Technically, the influence of the decision variable of agent $i$ on the second argument of its cost function $f_i$ vanishes as $N$ grows unbounded.

\smallskip
\begin{definition}[Generalized Aggregative equilibrium]
\label{def:GAE}
A collective strategy $\bs x^\star  \in \bs{\mc X}$ is a generalized aggregative equilibrium (GAE) of the game in \eqref{eq:Game} if, for all $i\in \mc I$:
\begin{multline*} %\label{eq:eps_GNE_agg}
% \label{eq:GNE}
g_i(x_i^\star) + f_i\left( x^\star_{i}, \avg(\bs x^\star) \right) \leq \\
\inf \left\{
 g_{i}(y) 
\textstyle
+f_i \big( y,  \avg(\bs x^\star) \big)
\, | \;  y \in \mc X_i(\bs x^{\star}_{-i}) 
\right\}. \quad
\end{multline*}

\vspace*{-1.2em}
{\hfill $\square$}
\end{definition}

\medskip
We note that Nash and aggregative equilibria are strictly connected. In fact, under some mild assumptions, it can be proven that every GAE equilibrium is an $\varepsilon$-GNE equilibrium, with $\varepsilon$ vanishing as $N$ diverges \cite[\S 4]{paccagnan2018nash}. Thus, in large-scale games where the agents are unaware of the population size, e.g. \cite{deori2018price}, 
a GAE represents a good approximation of a GNE.

\subsection{Variational equilibria and pseudo-subdifferential mapping} \label{subsec:VA}
%The collective strategy $\bs x^*$ is a GNE if all the local strategies $x_i^*$'s are best responses, i.e., $x_i^*$ is a solution to \eqref{eq:Game}, for all $i \in \mc I$. If the latter is satisfied, then there exist $N$ dual variables $\{\lambda_i^* \}_{i \in \mc I}$, with $\lambda_i^* \in \R^m_{\geq 0}$, such that the following set of KKT conditions are satisfied:
%\begin{align} \label{eq:KKT}
%\begin{array}{l}
%\0_{n} \in \partial_{x_i}J_i(x_i^*,\bs x_{-i}^*)+ \nc_{\Omega_i}(x_{i}^*) + A_i^\top \lambda_i^* \\
%\0_{n} \in \nc_{\R^m_{\geq 0}}(\lambda_i^*)- (A \bs x^* -b)
%\end{array} \quad (\forall i \in \mc I) 
%\end{align}

In this paper, we focus on the subclass of \textit{variational} GNE (v-GNE) that corresponds to the solution set of an appropriate generalized variational inequality, i.e., GVI$(P, \bs{\mc X})$, namely, the problem of finding $\bs x^* \in\ \bs{\mc X}$ such that
\begin{align*}
\langle \bs z^*, \bs x -\bs x^*\rangle \geq 0, \quad \forall \bs x  \in \bs {\mc X}, \, \bs z^* \in P(\bs x^*),
\end{align*}
where the mapping $P: \R^{nN} \rightrightarrows \R^{nN}$ denotes the so-called \textit{pseudo-subdifferential} (PS) of the game in \eqref{eq:Game}, defined as
\begin{align}  \label{eq:PsGr}
 P(\bs x) :=& \textstyle
\prod_{i =1}^N \partial_{x_i} \, J_i \left( x_i, \,  \bs x_{-i} \right).
\end{align}
Namely, the mapping $P$ is obtained by stacking together the subdifferentials of the agents' cost functions with respect to their local decision variables.
Given the splitting structure of the cost functions in \eqref{eq:CFi}, it follows by invoking \cite[Cor. 16.48 (iii)]{bauschke2017convex} component-wise that the PS can be written as the sum of a set-valued mapping and a single-valued one:
$$P = G + F,$$
where
\begin{align} \label{eq:G}
 G(\bs x) &:= \textstyle
  \prod_{i =1}^N \partial g_i(x_i), \\[.2em]
F(\bs x) &:= \textstyle
\col
  \left( { \big\{ \nabla_{x_i}f_i(x_i,\avg(\bs x)) \big\} }_{i=1}^N \right).
  \label{eq:F}
\end{align}
Note that, since the local decision variable $x_i$ of agent $i$ enters also in the second argument of the cost function $f_i(\cdot, \frac{1}{N} \cdot + \frac{1}{N}\sum_{j \neq i} x_j)$, with Leibniz notation, we have that
\begin{multline}
\label{eq:grad_agg}
\textstyle
\nabla_{x_i}f_i(x_i,\avg(\bs x)) \\[.2em]
= \textstyle
\left( \nabla_{x_i} f_i(x_i,z) + \frac{1}{N} \nabla_{z}f(x_i,z)
\right)
\hspace*{-0.2em}
\big|_{z=\avg(\bs x)}.
\end{multline}
In the remainder of the paper, let us refer to $F$ as \textit{pseudo-gradient} mapping (with a little abuse of terminology).

\smallskip
Under Assumption\blue{s \ref{ass:CFn}} and \ref{ass:CCF}, it follows by \cite[Prop.~12.4]{palomar2010convex} that any solution to GVI$(P,\bs{\mc X})$ is a (variational) Nash equilibrium of the game in \eqref{eq:Game}. The inverse implication is not true in general, and actually in passing from the Nash equilibrium problem to the GVI problem most solutions are lost \cite[\S~12.2.2]{palomar2010convex}; indeed, a game may have a Nash equilibrium while the corresponding GVI has no solution. 
Note that, if $J_i$ in \eqref{eq:CFi} is continuously differentiable for all $i \in \mc I$, then $P$ is a single-valued mapping and GVI$(P,\bs{\mc X})$ reduces to VI$(P,\bs{\mc X})$, which is commonly addressed in the context of game theory via projected pseudo-gradient algorithms, e.g. \cite{koshal:nedic:shanbhag:16,paccagnan2018nash,belgioioso2018projected}.

\smallskip
Next, we assume monotonicity of the PS mapping $P$, which ``is one of the weakest conditions under which global convergence can be proved" for VI-type methods  \cite[\S~5.2]{facchinei2010generalized}.

\smallskip
\begin{assumption}[Monotone and Lipschitz pseudo-gradient] \label{ass:monot}
The mapping $F$ in \eqref{eq:F} is maximally monotone and $\ell-$Lipschitz continuous over $\bs \Omega:= \prod_{i \in \mc I}  \Omega_i$, for some $\ell>0$.
{\hfill $\square$}
\end{assumption}

\smallskip
It directly follows that also the PS $P$ is maximally monotone since it is the sum of two maximally monotone operators \cite[Cor. 25.5]{bauschke2017convex}, i.e., $P=G+F$, where $G$ is maximally monotone as concatenation of maximally monotone operators \cite[Prop. 20.23]{bauschke2017convex} (i.e., the subdifferentials of the continuous and convex functions $g_i$'s \cite[Th. 20.25]{bauschke2017convex}), and $F$ is maximally monotone by Assumption \ref{ass:monot}.

The following lemma recalls some sufficient conditions for the existence and uniqueness of a variational GNE (v-GNE).
\begin{lemma}[Existence and Uniqueness of v-GNE]
Let Assumption \ref{ass:CCF} be satisfied. The following hold:
\begin{enumerate}[(i)]
\item If $\Omega_i$ is bounded, for all $i \in \mc I$, and $P$ is (strictly) monotone then there exists a (unique) solution to GVI$(P,\bs{\mc X})$.
\item If $P$ is strongly monotone then there exists a unique solution to GVI$(P,\bs{\mc X})$.
\end{enumerate}
\end{lemma}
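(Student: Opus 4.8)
The plan is to recast GVI$(P,\bs{\mc X})$ as a zero-finding problem for a maximally monotone operator and then apply standard surjectivity results, handling existence and uniqueness separately; uniqueness is the elementary part, while existence is the one that uses the monotone-operator machinery.

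\emph{Reformulation.} First I would use the classical equivalence (see e.g. \cite[Prop. 12.4]{palomar2010convex}, \cite[Prop. 1.5.8]{facchinei:pang}): $\bs x^*$ solves GVI$(P,\bs{\mc X})$ if and only if $\bs 0 \in (P + \nc_{\bs{\mc X}})(\bs x^*)$, i.e. $\bs x^* \in \zer(\mc A)$ with $\mc A := P + \nc_{\bs{\mc X}}$; note this automatically enforces $\bs x^* \in \bs{\mc X}$, since $\nc_{\bs{\mc X}}(\bs x) = \varnothing$ for $\bs x \notin \bs{\mc X}$. Under Assumption \ref{ass:CCF} the set $\bs{\mc X}$ is nonempty, closed and convex, hence $\nc_{\bs{\mc X}} = \partial \iota_{\bs{\mc X}}$ is maximally monotone \cite[Th. 20.25]{bauschke2017convex}; recalling that $P$ is maximally monotone (as already noted after Assumption \ref{ass:monot}) and that $\dom(P) = \R^{nN}$ because each $g_i$ is finite-valued and convex on all of $\R^n$ (Assumption \ref{ass:CFn}), the sum rule \cite[Cor. 25.5]{bauschke2017convex} gives that $\mc A$ is maximally monotone, with $\dom(\mc A) = \bs{\mc X}$.

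\emph{Existence.} For part (i), boundedness of every $\Omega_i$ makes $\dom(\mc A) = \bs{\mc X}$ bounded, and a maximally monotone operator with bounded domain is surjective (see e.g. \cite[Cor. 21.24]{bauschke2017convex}); hence $\bs 0 \in \range(\mc A)$ and $\zer(\mc A) \neq \varnothing$. For part (ii), $\eta$-strong monotonicity of $P$ is inherited by $\mc A$ on $\dom(\mc A)$ (the normal cone being monotone), and a strongly monotone maximally monotone operator is surjective (it is either coercive or has bounded domain), so again $\zer(\mc A) \neq \varnothing$, this time without any boundedness assumption on the $\Omega_i$.

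\emph{Uniqueness.} In both parts, suppose $\bs x_1^*$ and $\bs x_2^*$ both solve GVI$(P,\bs{\mc X})$, with associated $\bs z_k^* \in P(\bs x_k^*)$ for $k=1,2$. Testing the variational inequality for $\bs x_1^*$ at $\bs x = \bs x_2^*$ and the one for $\bs x_2^*$ at $\bs x = \bs x_1^*$ gives $\langle \bs z_1^*, \bs x_2^* - \bs x_1^* \rangle \ge 0$ and $\langle \bs z_2^*, \bs x_1^* - \bs x_2^* \rangle \ge 0$; summing yields $\langle \bs z_1^* - \bs z_2^*, \bs x_1^* - \bs x_2^* \rangle \le 0$, which contradicts strict monotonicity of $P$ (strict in case (i), strong hence strict in case (ii)) unless $\bs x_1^* = \bs x_2^*$. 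The main obstacle is essentially bookkeeping: making sure the maximal-monotonicity sum rule applies — which is exactly why I stress $\dom(P) = \R^{nN}$, a consequence of the $g_i$'s being finite-valued — and invoking the right surjectivity theorem for maximally monotone operators with bounded domain and for strongly monotone ones. As a fully self-contained alternative for the existence claim in (i), one could instead apply Brouwer's/Kakutani's fixed-point theorem to the set-valued map $\bs x \mapsto \proj_{\bs{\mc X}}(\bs x - P(\bs x))$ over the compact convex set $\bs{\mc X}$, as in \cite[Cor. 2.2.5]{facchinei:pang}.
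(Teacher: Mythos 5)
Your proposal is correct and follows essentially the same route as the paper: the paper's proof consists solely of citing \cite[Prop.~23.36]{bauschke2017convex} for (i) and \cite[Cor.~23.37]{bauschke2017convex} for (ii), which are exactly the existence/uniqueness results for maximally monotone operators (with bounded domain, resp.\ strongly monotone) that you apply to $\mc A = P + \nc_{\bs{\mc X}}$ after the standard GVI-to-inclusion reformulation, and your two-point uniqueness argument is the standard one implicit in those references. The only caveat — shared by the paper, since the cited results require \emph{maximal} monotonicity — is that the lemma's hypotheses state only (strict/strong) monotonicity of $P$, so your appeal to the maximal monotonicity established after Assumption~\ref{ass:monot} tacitly imports an assumption not listed in the lemma.
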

\begin{proof}
(i) \cite[Prop.~23.36]{bauschke2017convex}; (ii) \cite[Cor.~23.37]{bauschke2017convex}.
\end{proof}

\smallskip
Hereafter, we assume that a v-GNE of the game in \eqref{eq:Game} exists.

\smallskip
\begin{assumption}[Existence of a v-GNE] \label{ass:vGNEex}
The set of solutions to GVI$(P,\bs{\mc X})$ is nonempty.
{\hfill $\square$}
\end{assumption}

\smallskip 
\begin{remark}[Approximate pseudo-gradient]
Let all cost functions $(f_i)_{i \in \mathcal{I}}$ be uniformly bounded (on their respective feasible sets) for all population sizes, $N$. As the latter grows, the second term in the right hand side of \eqref{eq:grad_agg} vanishes. In fact, if $\lim_{N\rightarrow \infty}\avg(\bs x) < \infty$, we have that
\begin{equation}
\lim_{N \rightarrow \infty} \textstyle
\nabla_{x_i}f_i(x_i,\avg(\bs x))
= \textstyle
\nabla_{x_i} f_i(x_i,z)
\big|_{z= \lim_{N\rightarrow \infty}\avg(\bs x)}.
\end{equation}
Thus, let us define an approximate version of the PG in \eqref{eq:F} for large-scale games, i.e.,
\begin{equation}
\tilde{F}(\bs x) := \textstyle
\col
  \left( {\left\{\nabla_{x_i} f_i(x_i,z)
\big|_{z=\avg(\bs x)}\right\}}_{i=1}^N \right),
  \label{eq:aF}
\end{equation}
and the correspondent approximate PS, i.e.,
\begin{equation}
\label{eq:aP}
\tilde{P} := G + \tilde{F}.
\end{equation}
As for v-GNE, one can show that any solution to GVI$(\tilde{P},\bs{\mc X})$ is a (variational) GAE (v-GAE) of the game in \eqref{eq:Game} \cite{depersis:grammatico:20}. 
{\hfill $ \square$}
\end{remark}

%===========================================================================================
\subsection{Nash equilibria as zeros of a monotone operator} \label{subsec:GNEZ}
In this section, we exploit operator theory to recast the Nash equilibrium problem into a monotone inclusion, namely, the problem of finding a zero of a set-valued monotone operator.
As first step, we characterize a GNE of the game in terms of KKT conditions of the inter-dependent optimization problems in \eqref{eq:Game}. For each agent $i \in \mathcal{N}$, let us introduce the Lagrangian function $ L_i$, defined as 
\begin{equation*}
L_i(\bs x,\lambda_i) := J_i(x_i,\bs x_{-i})+ \iota_{\Omega_i}(x_i) +\lambda_i^\top (A \bs x-b),
\end{equation*}
where $\lambda_i \in \R^m_{\geq 0}$ is the Lagrangian multiplier associated with the coupling constraints.
It follows from \cite[\S 12.2.3]{palomar2010convex} that the set of strategies $\bs x^*$, where the Mangasarian--Fromovitz constraint qualification holds at $x_i^*$ for the set $\mc X_i (\bs x_{-i}^*)$, for all $i \in \mc I$, is a GNE of the game in \eqref{eq:Game} if and only if there exist some dual variables $\lambda_1^*,\ldots, \lambda_N^* \in \R^m_{\ge 0}$ such that the following coupled KKT conditions are satisfied:
\begin{equation} \label{eq:KKT}
\forall i \in \mc I:
\begin{cases}
0 \in \partial_{x_i} J_i(x_i^*,\bs x^*_{-i}) + \nc_{\Omega_i}({x}^*_i) + A_i^\top \lambda_i^*\\
0 \leq \lambda_i^* \perp -(A {\bs x}^*-b) \geq 0
\end{cases}  
\end{equation}

Similarly, we characterize a v-GNE in terms of KKT conditions by exploiting the Lagrangian duality scheme for the corresponding GVI problem, see \cite[\S 3.2]{auslender2000lagrangian}. Specifically, if $\bs {\mc X}$ satisfies the Slater's condition (Assumption 1), it follows by \cite[Th.\ 3.1]{auslender2000lagrangian} that ${\bs x}^*$ is a solution to GVI$(\bs{\mc X}, P)$ if and only if there exists a dual variable $\lambda^* \in \R^m_{\geq 0}$ such that
\begin{align} \label{eq:VI-KKT}
\begin{cases}
0 \in \partial_{x_i} J_i({x}^*_i,{\bs x}^*_{-i}) + \nc_{\Omega_i}({x}^*_i) + A_i^\top \lambda^*, 
\ \forall i \in \mc I
\\
0 \leq \lambda^* \perp -(A {\bs x}^*-b) \geq 0.
\end{cases}
\end{align} 

To cast \eqref{eq:VI-KKT} in compact form, we introduce the set-valued mapping $T: \bs{\Omega} \times \R^{m}_{\geq 0} \rightrightarrows \R^{nN} \times \R^{m}$, defined as
\begin{align} \label{eq:T}
T: 
\begin{bmatrix}
\bs x\\
\lambda
\end{bmatrix}
\mapsto
\begin{bmatrix}
\nc_{\bs \Omega}(\bs x)+P(\bs x)  + A^\top \lambda \\
\nc_{\R^{m}_{\geq 0}}(\lambda) - (A \bs x - b)
\end{bmatrix}.
\end{align}

The role of the mapping $T$ in \eqref{eq:T} is that its zeros correspond to the v-GNE of the game in \eqref{eq:Game}, or, equivalently, to the solutions to the KKT system in \eqref{eq:KKT} with equal dual variables, i.e., $\lambda_i = {\lambda}^*$ for all $i \in \mc I$, as formalized in the next statement.

%\smallskip
%\begin{proposition}[{\rm\cite[Th.~1]{belgioioso:grammatico:17cdc}}] \label{prop:zerT-x}
%The collective strategy ${\bs x}^*$ is a variational GNE of the game in \eqref{eq:Game} if and only if there exists $\lambda^* \in \R^m_{\geq 0}$ such that $\col({\bs x}^*,{\lambda}^*) \in \zer\left(T \right)$. Moreover, if $\col({\bs x}^*,{\lambda}^*) \in \zer\left(T\right)$, then ${\bs x}^*$ satisfies the KKT conditions in \eqref{eq:KKT} with Lagrangian multipliers $\lambda_i = {\lambda}^*$ for all $i \in \mc I$.
%{\hfill $\square$}
%\end{proposition}

\smallskip
\begin{proposition} \label{pr:UvGNE}
Let Assumptions \ref{ass:CFn}, \ref{ass:CCF} hold.
Then, the following statements are equivalent:
\begin{enumerate}[(i)]
\item $\bs x^*$ is a v-GNE of the game in \eqref{eq:Game};
\item $\exists \lambda^* \in \R^m_{\geq 0}$ such that, the pair $(x_i^*,\lambda^*)$ is a solution to the KKT in \eqref{eq:KKT}, for all $i \in \mc I$; 
\item $\bs x^*$ is a solution to GVI$(P,\bs{\mc X})$;
\item $\exists \lambda^* \in \R^m_{\geq 0}$ such that $\col(\bs x^*, \lambda^*) \in \zer (T)$.
{\hfill $\square$}
\end{enumerate}
\end{proposition}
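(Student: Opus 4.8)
The plan is to prove the four-way equivalence by a short cycle in which two of the implications are essentially matters of definition, so that the only substantive step is the Lagrangian-duality characterization of the generalized variational inequality.

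I would start with (i)$\,\Leftrightarrow\,$(iii): by the very definition of variational GNE adopted in \S\ref{subsec:VA}, $\bs x^*$ is a v-GNE of the game in \eqref{eq:Game} exactly when it solves GVI$(P,\bs{\mc X})$, so there is nothing to show here beyond recalling that definition (together with \cite[Prop.~12.4]{palomar2010convex} for the fact, already noted in the text, that any solution of the GVI is indeed a Nash equilibrium of \eqref{eq:Game}).

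Next, for (iii)$\,\Leftrightarrow\,$(iv) I would invoke Lagrangian duality for GVIs. The feasible set $\bs{\mc X}$ in \eqref{eq:calX} is the intersection of the closed convex set $\bs\Omega=\prod_{i\in\mc I}\Omega_i$ with the affine inequality $A\bs x-b\le\0$ and satisfies Slater's qualification by Assumption \ref{ass:CCF}; hence \cite[Th.~3.1]{auslender2000lagrangian} applies and yields that $\bs x^*$ solves GVI$(P,\bs{\mc X})$ if and only if there is $\lambda^*\in\R^m_{\ge 0}$ satisfying the primal--dual system \eqref{eq:VI-KKT}. It then remains to recognize \eqref{eq:VI-KKT} as $\0\in T(\col(\bs x^*,\lambda^*))$ with $T$ as in \eqref{eq:T}. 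This uses three elementary reformulations: first, stacking over $i\in\mc I$ the inclusions $0\in\partial_{x_i}J_i(x_i^*,\bs x^*_{-i})+\nc_{\Omega_i}(x_i^*)+A_i^\top\lambda^*$ and using that $\nc_{\bs\Omega}(\bs x)=\prod_{i\in\mc I}\nc_{\Omega_i}(x_i)$, the definition \eqref{eq:PsGr} of $P$, and the block form $A=[A_1|\dots|A_N]$, yields $\0\in P(\bs x^*)+\nc_{\bs\Omega}(\bs x^*)+A^\top\lambda^*$, the first block of $T$; second, the complementarity $0\le\lambda^*\perp-(A\bs x^*-b)\ge 0$ is equivalent to $(A\bs x^*-b)\in\nc_{\R^m_{\ge0}}(\lambda^*)$, i.e.\ $\0\in\nc_{\R^m_{\ge0}}(\lambda^*)-(A\bs x^*-b)$, the second block of $T$; third, dual feasibility $\lambda^*\ge 0$ is absorbed in $\dom\nc_{\R^m_{\ge0}}$. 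Concatenating the two blocks gives precisely $\col(\bs x^*,\lambda^*)\in\zer(T)$, i.e.\ (iv).

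Finally, (ii)$\,\Leftrightarrow\,$(iv) is immediate: statement (ii) is the coupled KKT system \eqref{eq:KKT} specialized to a common multiplier $\lambda_1^*=\dots=\lambda_N^*=:\lambda^*$, which is literally \eqref{eq:VI-KKT}; hence, by the reformulation just carried out, it is equivalent to $\col(\bs x^*,\lambda^*)\in\zer(T)$. Chaining (i)$\,\Leftrightarrow\,$(iii)$\,\Leftrightarrow\,$(iv)$\,\Leftrightarrow\,$(ii) completes the argument. The only non-bookkeeping point — and thus the place to be careful — is the middle step: one must check that the hypotheses of \cite[Th.~3.1]{auslender2000lagrangian} hold, which here amounts exactly to the convexity/closedness of $\bs\Omega$ and the Slater condition on $\bs{\mc X}$ granted by Assumption \ref{ass:CCF} (these, rather than any per-agent constraint qualification, are what allow the single multiplier $\lambda^*$ to be pulled out of the GVI); everything else is direct manipulation of normal cones and complementarity, with convexity of the cost terms (Assumption \ref{ass:CFn}) entering only to make $\partial_{x_i}J_i$ the appropriate subdifferential in \eqref{eq:PsGr}.
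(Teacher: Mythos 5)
Your proof is correct and follows essentially the same route as the paper: the same cycle (i)$\Leftrightarrow$(iii) via the definition of v-GNE and \cite[Prop.~12.4]{palomar2010convex}, (iii)$\Leftrightarrow$(iv) via \cite[Th.~3.1]{auslender2000lagrangian}, and (ii)$\Leftrightarrow$(iv) by identifying \eqref{eq:KKT} with a common multiplier as \eqref{eq:VI-KKT} and its solutions as $\zer(T)$. The only difference is that you spell out the normal-cone and complementarity bookkeeping that the paper delegates to \cite[\S 3.2]{auslender2000lagrangian}, which is a harmless (and correct) elaboration.
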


\begin{proof}
The equivalence (i)$\Leftrightarrow$(iii) is proven in \cite[Prop.\ 12.4]{palomar2010convex}. (iii)$\Leftrightarrow$(iv) follows by \cite[Th.\ 3.1]{auslender2000lagrangian}. (iv)$\Leftrightarrow$(ii) follows by noting that \eqref{eq:KKT}, with $\lambda_1^* = \ldots \lambda_N = \lambda^*$, is equivalent to \eqref{eq:VI-KKT}, whose solutions corresponds to the zeros of $T$ \cite[\S 3.2]{auslender2000lagrangian}.
\end{proof}

\smallskip
A similar equivalence can be derived for v-GAE.

\begin{proposition} \label{pr:UvGAE}
Let Assumption\blue{s \ref{ass:CFn}}, \ref{ass:CCF} hold.
Then, the following statements are equivalent:
\begin{enumerate}[(i)]
\item $\bs x^\star$ is a v-GAE of the game in \eqref{eq:Game};
\item $\exists \lambda^\star \in \R^m_{\geq 0}$ such that, the pair $(x_i^\star,\lambda^\star)$ is a solution to the KKT in \eqref{eq:KKT} with $\partial_{x_i} J_i(x_i^\star,\bs x_{-i}^\star)$ replaced by $\partial g_i(x_i^\star) +\nabla_{x_i} f_i(x_i^\star,z)
\big|_{z=\avg(\bs x^\star)}$,
for all $i \in \mc I$; 
\item $\bs x^\star$ is a solution to GVI$(\tilde{P},\bs{\mc X})$;
\item $\exists \lambda^\star \in \R^m_{\geq 0}$ such that $\col(\bs x^\star, \lambda^\star) \in \zer (\tilde T)$, where $\tilde T$ is analogous to $T$ in \eqref{eq:T} with $P$ replaced by its approximation $\tilde P$ in \eqref{eq:aP}.
{\hfill $\square$}
\end{enumerate}
\end{proposition}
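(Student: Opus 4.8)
The plan is to reproduce the proof of Proposition~\ref{pr:UvGNE} almost verbatim, systematically replacing the pseudo-subdifferential $P$ by its large-population approximation $\tilde P$ in \eqref{eq:aP}, the pseudo-gradient $F$ by $\tilde F$ in \eqref{eq:aF}, the operator $T$ by $\tilde T$, and each per-agent objective $J_i$ by the \emph{surrogate} objective $x_i \mapsto g_i(x_i) + f_i\big(x_i,z\big)\big|_{z=\avg(\bs x)}$, in which agent $i$ treats the aggregate appearing in the second slot of $f_i$ as an exogenous parameter fixed at its current value. The point to stress is that the three structural facts underlying Proposition~\ref{pr:UvGNE} --- the product (block) form of the pseudo-subdifferential, Slater's qualification for the feasible set $\bs{\mc X}$, and Lagrangian duality for the generalized VI --- are properties of the game's constraint geometry and of the \emph{form} of the operator, not of the operator $P$ itself; hence they transfer unchanged to $\tilde P$, and none of them requires monotonicity, so the statement can indeed be proved under Assumptions~\ref{ass:CFn} and \ref{ass:CCF} only, exactly as its v-GNE counterpart.

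Concretely, I would establish the cycle (i)$\,\Leftrightarrow\,$(iii)$\,\Leftrightarrow\,$(iv)$\,\Leftrightarrow\,$(ii). For (i)$\,\Leftrightarrow\,$(iii): by Definition~\ref{def:GAE}, a (variational) GAE of \eqref{eq:Game} is precisely a (variational) Nash equilibrium of the aggregative game obtained by freezing the aggregate in the second argument of each $f_i$, whose pseudo-subdifferential is, block by block, $\partial g_i(x_i) + \nabla_{x_i} f_i(x_i,z)\big|_{z=\avg(\bs x)}$, i.e., exactly $\tilde P = G + \tilde F$; thus the equivalence with GVI$(\tilde P,\bs{\mc X})$ follows from the GAE analogue of \cite[Prop.~12.4]{palomar2010convex}, as worked out in \cite[\S4]{paccagnan2018nash} and \cite{depersis:grammatico:20}. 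For (iii)$\,\Leftrightarrow\,$(iv): since $\bs{\mc X}$ satisfies Slater's constraint qualification (Assumption~\ref{ass:CCF}), \cite[Th.~3.1]{auslender2000lagrangian} applied to GVI$(\tilde P,\bs{\mc X})$ produces a dual variable $\lambda^\star \in \R^m_{\geq 0}$ certifying the solution, which is exactly $\col(\bs x^\star,\lambda^\star)\in\zer(\tilde T)$. Finally, (iv)$\,\Leftrightarrow\,$(ii) follows by unwinding $\tilde T$ --- namely $T$ in \eqref{eq:T} with $P$ replaced by $\tilde P$: the first block $0 \in \nc_{\bs\Omega}(\bs x^\star)+\tilde P(\bs x^\star)+A^\top\lambda^\star$ decouples into the modified stationarity inclusion of statement~(ii) (with $\partial_{x_i}J_i$ replaced by $\partial g_i + \nabla_{x_i}f_i(\cdot,z)|_{z=\avg(\bs x^\star)}$), while the second block reproduces $0 \le \lambda^\star \perp -(A\bs x^\star-b) \ge 0$, with a common multiplier across all agents.

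I expect the delicate step to be (i)$\,\Leftrightarrow\,$(iii), and specifically the implication from the first-order characterization back to Definition~\ref{def:GAE}: for this one needs the surrogate (frozen-aggregate) game to be a bona fide convex game, i.e., $g_i(\cdot)+f_i(\cdot,z)|_{z=\avg(\bs x^\star)}$ convex in $x_i$, so that stationarity is sufficient and not merely necessary for optimality; this has to be read into the convexity hypotheses on the $f_i$'s, since Assumption~\ref{ass:CFn} is literally phrased in terms of $f_i(\cdot,\tfrac{1}{N}\cdot+y)$. Once the surrogate game is recognized as convex with pseudo-subdifferential $\tilde P$, the remaining equivalences are a pure transcription of the Proposition~\ref{pr:UvGNE} argument, with $T$, $P$, and GVI$(P,\bs{\mc X})$ replaced by $\tilde T$, $\tilde P$, and GVI$(\tilde P,\bs{\mc X})$; no extra monotonicity, Lipschitz or boundedness assumptions are invoked, which is why Proposition~\ref{pr:UvGAE} shares the hypotheses of Proposition~\ref{pr:UvGNE}.
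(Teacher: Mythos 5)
Your proposal is correct and follows exactly the route the paper takes: the paper's own proof of Proposition~\ref{pr:UvGAE} is the single sentence ``similar to that of Proposition~\ref{pr:UvGNE}'', and your write-up is precisely that substitution of $\tilde P$, $\tilde F$, $\tilde T$ into the (i)$\Leftrightarrow$(iii)$\Leftrightarrow$(iv)$\Leftrightarrow$(ii) cycle with the same citations to \cite[Prop.~12.4]{palomar2010convex} and \cite[Th.~3.1]{auslender2000lagrangian}. Your remark that the frozen-aggregate surrogate needs $x_i \mapsto g_i(x_i)+f_i(x_i,z)$ convex for fixed $z$ --- which is not literally what Assumption~\ref{ass:CFn} states --- is a fair and worthwhile observation that the paper itself glosses over.
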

\begin{proof}
The proof is similar to that of Proposition \ref{pr:UvGNE}.
\end{proof}

%---------------------------------------------------

\section{Generalized Nash equilibrium seeking: Operator-theoretic characterization} \label{sec:CREA}

\subsection{Zero finding methods for GNE seeking}
In Section \ref{subsec:GNEZ}, we show that the original GNE seeking problem corresponds to the following generalized equation:
\begin{equation} \label{eq:Minc}
\text{find } \bs \omega^*:=\col(\bs x^*, \lambda^*) \in \zer(T).
\end{equation}
Next, we show that the mapping $T$ can be written as the sum of two operators, i.e., $T = T_1 + T_2$, where
\begin{align} \label{eq:T1}
T_1: 
\bs \omega &\mapsto
\col(F(\bs x),b);
\\
\label{eq:T2}
T_2: 
\bs \omega  &\mapsto 
\big(\nc_{\bs \Omega}(\bs x) + G(\bs x)\big) \times
\nc_{\R^m_{\geq 0}}(\lambda) 
 +S \bs \omega
\end{align}
and $S$ is a skew symmetric matrix, i.e., $S^\top = -S$,  defined as
\begin{align} \label{eq:SandPhi}
S:=
\begin{bmatrix}
0 & A^\top \\
-A & 0
\end{bmatrix}.
\end{align}
The formulation $T = T_1 + T_2$ is called \textit{splitting} of $T$, and we exploit it in different ways later on. We show next that the mappings $T_1$ and $T_2$ are both maximally monotone, which paves the way for operator splitting algorithms \cite[\S~26]{bauschke2017convex}.

\smallskip
\begin{lemma} \label{lem:U-Bmon}
Let Assumption\blue{s \ref{ass:CFn},} \ref{ass:CCF}, \ref{ass:monot} hold.
The mappings $T_1$ in \eqref{eq:T1}, $T_2$ in \eqref{eq:T2}  and $T$ in \eqref{eq:T} are maximally monotone.
{\hfill $\square$}
\end{lemma}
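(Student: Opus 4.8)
The plan is to prove maximal monotonicity of $T_1$ and of $T_2$ first, each time by assembling the operator from elementary maximally monotone building blocks, and then to conclude for $T$ by observing that $T=T_1+T_2$ and invoking the sum theorem for maximally monotone operators \cite[Cor.~25.5]{bauschke2017convex}. The recurring point of care, flagged already in the last paragraph below, is that maximal monotonicity is \emph{not} stable under addition without a domain qualification, so at each of the three sums I will have to exhibit one summand whose domain is the whole space.

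For $T_1$ in \eqref{eq:T1}: it is the direct product of the pseudo-gradient $F$ (acting on the $\bs x$-block) and the constant operator $\lambda\mapsto\{b\}$ (acting on the $\lambda$-block). By Assumption~\ref{ass:monot}, $F$ is maximally monotone, and by the $C^1$ regularity in Assumption~\ref{ass:CFn} it is everywhere single-valued and continuous, so $\dom F=\R^{nN}$; the constant operator is single-valued, continuous and monotone on all of $\R^m$ (it equals $\partial\langle b,\cdot\rangle$), hence maximally monotone. A direct product of maximally monotone operators is maximally monotone \cite[Prop.~20.23]{bauschke2017convex}, so $T_1$ is maximally monotone with $\dom T_1=\R^{nN+m}$.

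For $T_2$ in \eqref{eq:T2}: the normal-cone maps $\nc_{\bs\Omega}=\partial\iota_{\bs\Omega}$ and $\nc_{\R^m_{\geq 0}}=\partial\iota_{\R^m_{\geq 0}}$ are maximally monotone, being subdifferentials of the indicators of the nonempty closed convex sets $\bs\Omega=\prod_{i\in\mc I}\Omega_i$ (Assumption~\ref{ass:CCF}) and $\R^m_{\geq 0}$ \cite[Thm.~20.25]{bauschke2017convex}; $G$ is maximally monotone (as already noted after Assumption~\ref{ass:monot}) and, since each $g_i$ is real-valued convex on $\R^n$, has full domain $\R^{nN}$, so $\nc_{\bs\Omega}+G$ is maximally monotone by \cite[Cor.~25.5]{bauschke2017convex} (equivalently, $\nc_{\bs\Omega}+G=\partial(\iota_{\bs\Omega}+\sum_{i\in\mc I}g_i)$ by the subdifferential sum rule). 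Hence the direct product $(\nc_{\bs\Omega}+G)\times\nc_{\R^m_{\geq 0}}$ is maximally monotone \cite[Prop.~20.23]{bauschke2017convex}. The matrix $S$ in \eqref{eq:SandPhi} is skew-symmetric, so $\bs\omega^\top S\bs\omega=0$ for all $\bs\omega$ and $S$ is monotone; as a bounded linear operator with full domain it is maximally monotone \cite[Ex.~20.35]{bauschke2017convex}. Adding the two, once more via \cite[Cor.~25.5]{bauschke2017convex} (with $\dom S=\R^{nN+m}$), gives that $T_2$ is maximally monotone.

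Finally, recalling $P=G+F$, one reads off from \eqref{eq:T1}--\eqref{eq:T2} that $T=T_1+T_2$; since $\dom T_1=\R^{nN+m}$, the sum theorem \cite[Cor.~25.5]{bauschke2017convex} applies a third time and yields that $T$ is maximally monotone. I do not expect a genuine difficulty here: the only non-mechanical ingredient is exactly what Assumption~\ref{ass:monot} grants, namely that $F$ is not merely monotone and Lipschitz on $\bs\Omega$ but a maximally monotone operator, which is what makes the building-block arguments legitimate; everything else is careful bookkeeping of domains so that the addition theorem can be invoked at each of the three sums, and in each case one summand ($G$, then $S$, then $T_1$) has full domain.
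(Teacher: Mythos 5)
Your proof is correct and follows essentially the same route as the paper's: build $T_1$ and $T_2$ from maximally monotone blocks via concatenation \cite[Prop.~20.23]{bauschke2017convex} and skew-symmetry \cite[Ex.~20.35]{bauschke2017convex}, then sum using \cite[Cor.~25.5]{bauschke2017convex} with a full-domain summand at each stage. Your extra care in justifying $\nc_{\bs\Omega}+G$ via the domain qualification on $G$ is a slightly more explicit bookkeeping of a step the paper leaves implicit, but it is not a different argument.
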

\begin{proof}
$T_1$ is maximally monotone since $F$ is such by Assumption \ref{ass:monot}, $b$ is a constant, thus maximally monotone, and the concatenation of maximally monotone operator remains maximally monotone \cite[Prop.~20.23]{bauschke2017convex}. 
The first term of $T_2$, i.e., $(\nc_{\bs \Omega} + G\big) \times
\nc_{\R^m_{\geq 0}} $, is maximally monotone, since normal cones of closed convex sets are maximally monotone and the concatenation preserves maximality \cite[Prop.~20.23]{bauschke2017convex}; the second term, i.e., $S$, is linear and skew symmetric, i.e., $S^\top = - S$, thus maximally monotone \cite[Ex.~20.35]{bauschke2017convex}. Then, the sum of the previous terms, namely, $T_2$, is maximally monotone by \cite[Cor.~25.5]{bauschke2017convex}, since $\dom S = \R^{nN+m}$. Equivalently, the maximal monotonicity of $T = T_1 + T_2$ follows from \cite[Cor.~25.5]{bauschke2017convex}, since $\dom T_1 = \R^{nN+m}$.
\end{proof}

\medskip
In the remainder of this section, we characterize the main features and limitations of some existing semi-decentralized algorithms for aggregative games with coupling constraints from a general operator-theoretic perspective.  

\smallskip

\begin{remark}[Generalized aggregative equilibrium seeking]
In light of Proposition \ref{pr:UvGAE}, the same operator-theoretic approach can be exploited to recast the GAE seeking problem as a monotone inclusion problem. It follows that all the GNE seeking algorithms introduced next can be adopted for seeking a GAE. Specifically, for gradient-based algorithms, it is sufficient to replace $\nabla_{x_i}f_i(x_i,\avg(\bs x))$ in \eqref{eq:F} with its approximate version, i.e., $\textstyle
\nabla_{x_i} f_i(x_i,z)
\big|_{z=\avg(\bs x)}$. 
{\hfill $\square$}
\end{remark}

%\medskip
%\red{Critical review of existing GNE seeking algorithms as Splitting methods to solve \eqref{eq:Minc}}.
%
%
%\begin{enumerate}[(i)]
%\item pFB/APA
%
%\item inertial pFB (Pavel)
%
%\item 2 layer Paccagnan
%
%\item FBF
%
%\end{enumerate}

%=======================================================

\subsection{Preconditioned forward-backward algorithm}
\label{subsec:pFB}
The main idea of the preconditioned forward-backward algorithm (pFB, Algorithm 1) is that the zeros of the mapping $T$ in \eqref{eq:T} correspond to the fixed points of a certain operator which depends on the chosen splitting \eqref{eq:T1}$-$\eqref{eq:T2} \cite[\S 26.5]{bauschke2017convex} and on an arbitrary symmetric, positive definite matrix $\Phi$, known as preconditioning matrix \cite{yi2017distributed}. The pFB method, proposed in \cite{yi2017distributed} for strongly monotone games, is applicable to aggregative games with semi-decentralized algorithmic structure \cite{belgioioso2018projected}, in which case it reduces to the APA \cite[\S 12.5.1]{facchinei:pang}, also proposed in \cite{paccagnan2018nash}. A critical assumption for the convergence of this method is the cocoercivity of the pseudo-gradient mapping $F$ in \eqref{eq:F}, as postulated next.

\smallskip
\begin{assumption}[Cocoercive pseudo-gradient] \label{ass:coco}
The mapping $F$ in \eqref{eq:F} is $\gamma-$cocoercive on $\bs \Omega$, for some $\gamma>0$.
{\hfill $\square$}
\end{assumption}

\smallskip
\begin{remark}[\blue{Sufficient conditions for cocoercivity of $F$}] \label{rem:SMONandLIP_coco}
If $F$ is $\mu-$strongly monotone and $\ell-$Lipschitz, $\ell\geq\mu > 0$, then $F$ is $(\mu/\ell^2)-$cocoercive. On the contrary, cocoercive mappings are not necessarily strongly monotone, e.g. the gradient of a non-strictly convex smooth function. \blue{Some sufficient conditions for cocoercivity of $F$ based on the local cost functions $(f_i)_{i \in \mc I}$ are provided in Appendix \ref{cond:suffCOCO}.}
{\hfill $\square$}
\end{remark}

%%%%%%%%%%%%%%%%%%%% ALG. 1
\begin{figure}[b]
%{
%\centering
%\begin{minipage}{\columnwidth}
\hrule
\smallskip
\textsc{Algorithm $1$}: Preconditioned forward-backward (pFB)
\smallskip
\hrule
\medskip

\textbf{Initialization}: $\delta > \frac{1}{2\gamma}$;
$\forall i \in \mc I$, $x_i^0 \in \R^{n}$, $0<\alpha_i \leq \textstyle (\norm{A_i} + \delta)^{-1}$; $\lambda^0 \in \R^m_{\geq 0}$, $0<\beta \leq ( \frac{1}{N} \sum_{i=1}^N \norm{A_i} +\frac{1}{N} \delta )^{-1}$.

\medskip
\noindent
\textbf{Iterate until convergence}:
\begin{enumerate}[1.]
\item Local: Strategy update, for all $i \in \mc I$:\\[.5em]
\hspace*{1em} $
\begin{array}{l}
y_i^{k} = x_i^k -  \alpha_i ( \nabla_{x_i}f_i(x_i^{k},\avg(\bs x^{k})) + A_i^\top \lambda^k)\\[.4em]
x_i^{k+1} = \prox_{\alpha_i g_i + \iota_{\Omega_i}}\,(y_i^{k})\\[.4em]
d_i^{k+1} = 2A_i x_i^{k+1} - A_i x_i^k - b_i
\end{array}
$\\[.5em]
%
%\begin{align*}
%%&\text{For }i=1:N\\
%%&\left|
%%\begin{array}{l}
%& y_i^{k} = x_i^k -  \alpha_i ( \nabla_{x_i}f_i(x_i^{k},\avg(\bs x^{k})) + A_i^\top \lambda^k),\\[.2em]
%& x_i^{k+1} = \prox_{\alpha_i g_i + \iota_{\Omega_i}}\,(y_i^{k}),\\[.2em]
%&d_i^{k+1} = 2A_i x_i^{k+1} - A_i x_i^k - b_i,
%%\end{array}
%%\right.\\
%%&\text{end}
%%
%\end{align*}

\item Central coordinator: dual variable update\\[.5em]
\hspace*{1em} $
\begin{array}{l}
\lambda^{k+1} = \proj_{\R^m_{\geq 0}} \big(\lambda^k + \beta \, \avg(\bs d^{k+1}) \big)
\end{array}
$\\
\end{enumerate}
\hrule
%\end{minipage}}
\end{figure}

\begin{remark}(i) The local auxiliary variables $y_i$'s and $d_i$'s are introduced to cast Algorithm 1 in a more compact form. The quantity $\avg( \bs d^{k+1}) := \frac{1}{N} \sum_{i=1}^N (2 A_i x_i^{k+1} - A_i x_i^k - b_i) $ measures the violation of the coupling constraints, technically, it is the ``reflected violation" of the constraints at iteration $k$.\\
(ii) The proximal operator in Algorithm $1$ reads as
\begin{equation*} \textstyle
\prox_{\alpha_i g_i + \iota_{\Omega_i}}(y) =
%\underset{\xi \in  \Omega_i}{\argmin} \,  g_i(\xi)+\frac{1}{2\alpha_i}\|\xi-y\|^2\\
\left\{
\begin{array}{r l}
\underset{z \in \R^n}{\argmin} \,&  g_i(z)+\frac{1}{2\alpha_i}\|z-y\|^2\\
\mathrm{s.t.} & z \in  \Omega_i
\end{array}
\right.
\end{equation*}
If $g_i = 0$, then the primal update in Algorithm $1$ becomes a projection, i.e., $\prox_{\alpha_i g_i + \iota_{\Omega_i}} = \proj_{\Omega_i}$.
{\hfill $\square$}
\end{remark}

\smallskip
If Assumption \ref{ass:coco} holds and the step sizes $\{\alpha_i \}_{i \in \mc I}$ and $\beta$ are small enough, then the sequence $( \col(\bs x^k, \lambda^k ))_{k \in \bb N}$ generated by Alg. $1$ converges to some $\col(\bs x^*, \lambda^*) \in \zer(T)$, where $\bs x^*$ is a v-GNE, see \cite[Th.\ 1]{belgioioso2018projected} for a formal proof of convergence.

\smallskip
Algorithm $1$ is semi-decentralized. In fact, at each iteration $k$, a central coordinator is needed to:
\begin{enumerate}[(i)]
\item gather and broadcast the average strategy $\avg(\bs x^k)$;
\item gather the reflected violation of the constraints $\avg(\bs d^k)$;
\item update and broadcast the dual variable $ \lambda^k$.
\end{enumerate}
Specifically, after each central and local update in Algorithm $1$, a communication stage follows. The central coordinator broadcasts to all the agents the current values of the aggregate function $\avg(\bs x^k)$ and the multiplier vector $\lambda^k$. In return, each agent $i \in \mc I$ updates its own strategy $x_i$, based on the received
signals, and forwards it to the central coordinator. Moreover, at each iteration only two vectors, in $\R^n$ and $\R^m$ respectively, are broadcast, independently on the population size $N$. Each decentralized computation consists of solving a finite-dimensional convex optimization problem, for which efficient algorithms are available.

\smallskip
\begin{remark}
The primal-dual iterations of Algorithm $1$ are sequential, namely, while the local primal updates $x_i^{k+1}$ can be performed in parallel, the dual update, $\lambda^{k+1}$, exploits
the most recent value of the agents' strategies, $d_i^{k+1}$. This feature is convenient since it follows the natural information flow in the considered semi-decentralized communication structure.
{\hfill $\square$}
\end{remark}

\smallskip
\subsubsection*{Algorithm $1$ as a fixed-point iteration}
The dynamics generated by Algorithm $1$ can be cast in a compact form as the fixed-point iteration
\begin{align} \label{eq:R}
\bs \omega^{k+1}=
R_{\text{FB}}( \bs \omega^k),
\end{align}
where $\bs \omega^k = \col(\bs x^k,  \lambda^k)$ is the vector of the primal-dual variables and $R_{\text{FB}}$ is the so-called FB operator \cite[Eq. (26.7)]{bauschke2017convex}:
\begin{align} \label{eq:Rmap}
R_{\text{FB}}:= (\Id+\Phi^{-1} T_2)^{-1}\circ(\Id-\Phi^{-1} T_1),
\end{align}
where $T_1$ and $T_2$ as in \eqref{eq:T1}$-$\eqref{eq:T2} and $\Phi$ is a preconditining matrix, here defined as
\begin{align} \label{eq:Phi}
\Phi := \begin{bmatrix}
\bar{\alpha}^{-1} \otimes I_n &  - A^\top \\
 -A &  N\beta^{-1} I_m
\end{bmatrix},
\end{align}
with $\bar{\alpha}=\diag(\alpha_1,\ldots,\alpha_N)$. When the mapping $T_1$ is cocoercive (Assumption \ref{ass:coco}), $T_2$ is maximally monotone (Lemma \ref{lem:U-Bmon}) and the step sizes in the main diagonal of $\Phi$ are set as in Algorithm $1$, then the preconditioned mappings $\Phi^{-1}T_1$ and $\Phi^{-1}T_2$ satisfy the following properties with respect to the $\Phi-$induced norm (\cite[Lemma 7]{yi2019operator}):
\begin{enumerate}[(i)]
\item $\Phi^{-1}T_1$ is $\gamma \delta-$cocoercive w.r.t. $\|\cdot \|_{\Phi}$,

\item $\Phi^{-1}T_2$ is maximally monotone w.r.t. $\|\cdot \|_{\Phi}$.
\end{enumerate}
It follows from \cite[Prop. 26.1(iv)-(d)]{bauschke2017convex} that the FB operator $R_{\text{FB}}$ in \eqref{eq:Rmap} is averaged with respect to the same norm, i.e., 
\begin{enumerate}
\item[(iii)]  $R_{\text{FB}}$ is $\left( \frac{2\delta \gamma}{4\delta \gamma-1}\right)-$averaged w.r.t. $\|\cdot \|_{\Phi}$.
\end{enumerate}
Hence, the Banach--Picard fixed-point iteration in \eqref{eq:R} converges to some $\bs \omega^*:= \col(\bs x^*,  \lambda^*) \in \fix(R_{\text{FB}})$ \cite[Prop. 5.16]{bauschke2017convex}, where $\fix(R_{\text{FB}})= \zer(T)$ \cite[Prop. 26.1(iv)-(a)]{bauschke2017convex}, $\zer(T) \neq \varnothing$ (Assumption \ref{ass:vGNEex}) and, therefore, $\bs x^*$ is a v-GNE by Prop. \ref{pr:UvGNE}. We refer to \cite{yi2019operator,belgioioso2018projected} for a complete convergence analysis.

\smallskip
\subsubsection*{Inertial pFB algorithm}
To conclude this section, we recall the inertial version of the pFB (Algorithm $1$), originally proposed for the more general context of generalized network games in \cite[Alg.~2]{yi2019operator} and summarized here in Algorithm 1B.

\begin{figure}[b]
\hrule
\smallskip
\textsc{Algorithm 1B}: Inertial pFB (I-pFB)
\smallskip
\hrule
\medskip

\textbf{Initialization}: $\theta \in [0,1/3)$ and $\delta > \frac{(1-\theta)^2}{2\gamma(1-3\theta)}$, with $\gamma$ as in Assumption \ref{ass:coco}; for all $ i \in \mc I$, $x_i^0 = \tilde x_i^0 \in \R^{n}$, $0 < \alpha_i \leq \textstyle (\norm{A_i} + \delta)^{-1}$; $\lambda^0 = \tilde \lambda^0 \in \R^m_{\geq 0}$, $0<\beta \leq ( \frac{1}{N} \sum_{i=1}^N \norm{A_i} + \frac{1}{N} \delta )^{-1}$.

\medskip
\noindent
\textbf{Iterate until convergence}:
\begin{enumerate}[1.]
\item Local: Strategy update, for all $i \in \mc I$:\\[.5em]
\hspace*{1em} $
\begin{array}{l}
 y_i^{k} = \tilde x_i^k -  \alpha_i ( \nabla_{x_i}f_i(\tilde x_i^{k},\avg(\bs{\tilde x}^{k})) + A_i^\top \tilde \lambda^k)\\[.4em]
 {x}_i^{k+1} = \prox_{\alpha_i g_i + \iota_{\Omega_i}}\,(y_i^{k})\\[.4em]
 \tilde{x}_i^{k+1} = x^{k+1}_i + \theta( x^{k+1}_i- x^{k}_i)\\[.4em]
d_i^{k+1} = 2A_i x_i^{k+1} - A_i \tilde x_i^k - b_i
\end{array}
$\\[.5em]

\item Central coordinator: Dual variable update:\\[.5em]
\hspace*{1em} $
\begin{array}{l}
\lambda^{k+1} = \proj_{\R^m_{\geq 0}} \big(\tilde \lambda^k + \beta \, \avg(\bs d^{k+1}) \big)\\[.4em]
\tilde{\lambda}^{k+1} = {\lambda}^{k+1} + \theta( {\lambda}^{k+1}- {\lambda}^{k})
\end{array}
$\\[.4em]

\end{enumerate}
\hrule
%\end{minipage}
%}
\end{figure}

\smallskip
We note that the inertial extrapolation phase, at the end of the local and central updates, improves the converge properties of the pFB algorithm. The convergence of Algorithm 1B can be studied via fixed-point theory \cite{mainge2008convergence}, or by relying on the inertial version of the FB splitting method \cite{lorenz2015inertial}. We refer to \cite[Th.~2]{yi2019operator} for a complete convergence proof of this algorithm.

\smallskip
\subsubsection*{Algorithm 1B as a fixed-point iteration}
The dynamics generated by Algorithm 1B can be cast in a compact form as the following inertial fixed-point iteration:
\begin{subequations} \label{eq:Inertial-R}
\begin{align} 
\tilde{\bs \omega}^k &= \bs \omega^k + \theta(\bs \omega^k- \bs \omega^{k-1}),\\
\bs \omega^{k+1}&=
R_{\text{FB}}( \tilde{\bs \omega}^k),
\end{align}
\end{subequations}
where $\bs \omega^k = \col(\bs x^k,  \lambda^k)$ and $\tilde{\bs \omega}^k = \col(\tilde{\bs x}^k,  \tilde \lambda^k)$ are the stacked vectors of the iterates and $R_{\text{FB}}$ is the FB operator defined in \eqref{eq:Rmap}. The convergence analysis of inertial schemes as in \eqref{eq:Inertial-R} are studied in \cite{mainge2008convergence}; while more precise conditions for the convergence of \eqref{eq:Inertial-R} are derived in \cite[Th.~1]{lorenz2015inertial}.

%=======================================================
\subsection{Algorithms for (non-strictly) monotone aggregative games} \label{sec:AMAG}

When the pseudo-gradient mapping $F$ is non-cocoercive, non-strictly monotone, then Algorithm $1$ may fail to converge, see \cite{grammatico:18} for an example of non-convergence.
Few algorithms are available in the literature for solving merely monotone (aggregative) games with coupling constraints, each with important technical or computational limitations. 

\smallskip
\subsubsection*{Iterative Tikhonov regularization (\textsc{Algorithm $2$})}
To be applicable to aggregative games with (non-cocoercive, non-strictly) monotone pseudo-gradient mapping, the forward-backward algorithm should be augmented with a vanishing regularization. This approach is known as iterative Tikhonov regularization (ITR) and generates a forward-backward algorithm with double-layer vanishing step sizes \cite[\S 1.3(a)]{kannan2012distributed}:
\begin{align*}
&  \forall i:
\left\{
\begin{array}{l}
y_i^{k} = x_i^k -  \gamma^k ( \nabla_{x_i}f_i(x_i^{k},\avg(\bs x^{k})) + A_i^\top \lambda^k + \epsilon^k x_i^k )\\[.2em]
 x_i^{k+1} = \proj_{{\Omega_i}}\,(y_i^{k}), \quad d_i^{k+1} = A_i x_i^{k+1} - b_i
\end{array}
\right.
\\
&\lambda^{k+1} = \proj_{\R^m_{\geq 0}} \big(\lambda^k + \gamma^k (N \avg(\bs d^{k}) - \epsilon^k \lambda^k) \big).
\end{align*}

The convergence proof is based on the fact that the actual step size $\gamma^k$ must vanish faster than the vanishing regularization parameter $\epsilon^k$, \cite[(A2.2), \S 2.1]{kannan2012distributed}. 
The extension of ITR schemes to non-smooth games can be possibly achieved by discretizing the algorithm proposed in \cite[\S~4]{boct2020inducing}.

\smallskip
\subsubsection*{Inexact preconditioned proximal-point (\textsc{Algorithm $3$})}
Recently, the inexact preconditioned proximal-point (PPP) method \cite{yi2018distributed} was proposed to solve monotone (aggregative) games, virtually with no additional technical assumption other than monotonicity of the PS mapping $P$. 
When applied to the game in \eqref{eq:Game}, the PPP \cite[Alg. 2]{yi2018distributed} generates a double-layer algorithm, in which at each (outer) iteration $k$, the inner loop consists of solving (inexactly) an aggregative game without coupling constraints and with cost functions $\bar J_i$'s defined as 
\begin{align*}
\bar J^k_i\left( x_{i}, { \bs{x}}_{-i} \right) = J_i\left( x_{i}, \bs{x}_{-i} \right) + {(A_i^\top \lambda^k)}^\top x_i + \alpha_i {\| x_i - x_i^k\|}^2,
\end{align*}
where $\lambda^k$, i.e., the dual variable, and $x_i^k$, i.e., the so-called centroid, stay fixed during the inner iterations. When the subgame is solved with the desired precision $\varepsilon^k$, namely, an $\varepsilon^k-$NE profile $\bar{ \bs x}^k$ is reached, the agents update their centroids
\begin{align*}
(\forall i \in \mc I): \quad x_i^{k+1} = \bar x_i^k, \quad d_i^{k+1} = 2 A_i x_i^{k+1} - A_i x_i^k - b_i.
\end{align*}
Finally, the central coordinator updates the dual variable as
\begin{align*}
\lambda^{k+1} = \textstyle
\proj_{\R^m_{\geq 0}}(
\lambda^k + \beta \, \avg(\bs d^{k+1})).
\end{align*}
The primal-dual dynamics generated by the PPP can be cast in compact form as the fixed-point iteration
\begin{align} \label{eq:J}
\bs \omega^{k+1}= \text J_{\Phi^{-1} T}
( \bs \omega^k) + e^k,
\end{align}
where $\bs \omega^k = \col(\bs x^k,  \lambda^k)$ is the vector of primal-dual iterates, $J_{\Phi^{-1} T}$ is the so-called resolvent operator of the mapping $\Phi^{-1} T$, defined as
$
J_{\Phi^{-1} T} := (\Id+\Phi^{-1} T)^{-1},
$
and $e^k$ is an error term that accounts for the inexact computations of $\text J_{\Phi^{-1} T}
( \bs \omega^k)$.

When the mapping $T$ is maximally monotone (Lemma \ref{lem:U-Bmon}) and the step sizes in the main diagonal of $\Phi$ are set such that $\Phi \succ 0$, then the resolvent $J_{\Phi^{-1} T}$ is firmly nonexpansive \cite[Prop. 23.8]{bauschke2017convex} ($1/2-$averaged) w.r.t. the $\Phi-$induced norm, i.e., $\|\cdot \|_{\Phi}$. Moreover, if the error sequence $(e^k)_{k \in \bb N}$ is summable (which is guaranteed by solving the regularized sub-games with increasing precision, namely $\sum_{k=0}^\infty \varepsilon_k < \infty$), then the inexact fixed-point iteration \eqref{eq:J} converges to some $\bs \omega^*:= \col(\bs x^*, \lambda^*) \in \fix(J_{\Phi^{-1} T}) = \zer(T) \neq \varnothing$ \cite[Prop. 5.34]{bauschke2017convex}, where $\bs x^*$ is a v-GNE. We refer to \cite{yi2018distributed} for a complete convergence analysis of Algorithm $3$.

\smallskip
\begin{remark}[Computational limitations of ITR and PPP]
The solution of each sub-game of the PPP (Algorithm 3), requires nested (inner) iterations, and, therefore, multiple communication stages between the agents and the central coordinator. Similarly to the (doubly) vanishing step sizes of the ITR schemes (Algorithm 2), that lead to slow speed of convergence in practice, we can regard double-layer or nested iterations as an important computational limitation.
{\hfill $\square$}
\end{remark}

\smallskip
\subsubsection*{Tseng's forward-backward-forward splitting}
To solve non-cocoercive, non-strictly monotone aggregative games via non-vanishing iterative steps or nested iterations, the forward-backward-forward (FBF) method \cite[\S 26.6]{bauschke2017convex} adds an additional forward step to the FB algorithm.
In Algorithm 4, we introduce a modified version of the FBF algorithm for aggregative games, originally proposed in \cite[Alg. 1]{belgioioso:grammatico:17cdc}.

\smallskip
Algorithm $4$ improves \cite[Alg. 1]{belgioioso:grammatico:17cdc} on two main aspects:
\begin{enumerate}[(i)]
\item (Partially uncoordinated step sizes) each agent $i \in \mc I$ and the central coordinator have decision authority on their own local step sizes;

\item (Additional projection) The local updates in step 3 and  the central update in step 4 are projected onto the local feasible sets, $ \Omega_i$'s and $\R^m_{\geq 0}$, respectively. These additional projections make sure that the iterates $x_i^k$'s live in the domain of correspondent functions $f_i$'s and in fact can improve the convergence speed of the algorithm.
\end{enumerate}

\begin{figure}[htbp]
\hrule
\smallskip
\textsc{Algorithm $4$}: Tseng's forward-backward-forward (FBF)
\smallskip
\hrule
\medskip

\textbf{Initialization}: For all $ i \in \mc I$, $x_i^0 \in \R^{n}$ and $0 <\alpha_i < (\ell+ \|A\|)^{-1}$; $\lambda^0 \in \R^m_{\geq 0}$ and $0< \beta < (\ell+ \|A\|)^{-1}$.

\bigskip
\noindent
\textbf{Iterate until convergence}:
\begin{enumerate}[1.]
\item Local: Strategy update, for all $i \in \mc I$:\\[.5em]
\hspace*{1em}
$
\begin{array}{l}
y_i^{k} = x_i^k -  \alpha_i ( \nabla_{x_i}f_i(x_i^{k},\avg(\bs x^{k})) + A_i^\top \lambda^k)\\[.4em]
 \tilde x_i^{k} = \prox_{\, \alpha_i g_i + \iota_{\Omega_i}}\,(y_i^{k})\\[.4em]
\tilde d_i^k = A_i x_i^k -  b_i
\end{array}
$\\

\item Central coordinator: dual variable update\\[.5em]
\hspace*{1em}
$
\begin{array}{l}
\tilde \lambda^{k} = \proj_{\R^m_{\geq 0}} \big(\lambda^k + \beta \, \avg(\tilde{\bs d}^{\,k})\big)
\end{array}
$\\

\item Local: Strategy update, for all $i \in \mc I$:\\[.5em]
\hspace*{1em}
$
\begin{array}{l}
r_i^{k+1} =  \tilde{x}_i^k - \alpha_i (\nabla_{x_i}f_i(\tilde x_i^{k},\avg( \tilde{\bs x}^{k})) + A_i^\top \tilde \lambda^k)\\[.4em]
 x_i^{k+1} = \proj_{\Omega_i }(x_i^k - y_i^k + r_i^{k+1})\\[.4em]
 d_i^{k+1} = A_i \tilde{x}_i^k-b_i
\end{array}
$\\

\item Central coordinator: dual variable update\\[.5em]
\hspace*{1em}
$
\begin{array}{l}
\lambda^{k+1} = \proj_{\R^m_{\geq 0}} 
\big(
\tilde{\lambda}^k + \beta (\avg(\bs d^{k+1}) - \avg(\tilde{\bs d}^{k})) 
\big)
\end{array}
$

\end{enumerate}
\hrule
\end{figure}

%%%%%%%%%%%%%%%%%%%%%%%%%%%%%%%%%%%%%%%%%%%%%%%
\renewcommand{\arraystretch}{1.5}
\begin{table*}[htbp]
\caption{Comparison among v-GNE algorithms. Legend: C stands for coordinated step sizes, P-UC for partially uncoodinated, F-UC for fully uncoordinated; MON for monotone, SMON for strongly monotone, COCO stands for (monotone and) cocoercive. \label{tab:1}}
\centering
\begin{tabular}{p{3.2cm} p{1.4cm} p{1.4cm} p{1.4cm} p{1.4cm} p{1.4cm} p{2.1cm} p{2cm}}
\hline
& APA \newline \cite[Alg.\ 2]{paccagnan2018nash}  & pFB \cite{belgioioso2018projected} \newline (Alg.\ 1)  & ITR \cite{kannan2012distributed} \newline (Alg. 2) & PPP \cite{yi2018distributed} \newline (Alg. 3) & FBF \cite{belgioioso:grammatico:17cdc} \newline (Alg. 4) & FoRB\newline (Alg. 5, $\theta=0$) & cPPP \newline (Alg. 6, $\theta^k =0$)\\ 
\hline
%Convergence to v-GNE & \checkmark & \checkmark  & \checkmark & \checkmark & \checkmark &  \checkmark & \checkmark  \\
Communications / iteration & $1$ & $1$ & $1$ &  $\infty$ & $2$ &  $1$ &  $1$ \\
\hline
Local step sizes & fixed, \newline C & fixed, \newline P-UC & vanishing, \newline P-UC & fixed,\newline P-UC & fixed, \newline P-UC & fixed, \newline P-UC & fixed, \newline F-UC\\
\hline
Pseudo-subdifferential & SMON & COCO  & MON  & MON  & MON & MON & MON\\
\hline
Local cost functions &  diff. & non-diff. \newline as in \eqref{eq:CFi}  & diff.  &  non-diff. \newline as in \eqref{eq:CFi}  &  non-diff. \newline as in  \eqref{eq:CFi}  &  non-diff. \newline as in \eqref{eq:CFi} &  linear coupling \newline as in \eqref{eq:CF-SS}  \\
\hline
Inertia  & & Alg. 1B &    & \checkmark & \cite[Th. 4]{boct2016inertial} & \checkmark & \checkmark \\
\hline
Alternating inertia & & Cor. 1   &  & \checkmark &  &  & Cor. 2\\
\hline
Over-relaxation & & \checkmark  &  & \checkmark &  &  & Alg. 6B\\
\hline
Acceleration parameters & - & C & - & F-UC & C & P-UC& F-UC\\
\hline
\end{tabular}
\end{table*}
%%%%%%%%%%%%%%%%%%%%%%%%%%%%%%%%%%%%%%%%%%%%%%%

The convergence analysis of Algorithm $4$ is (almost) identical to that of \cite[Alg. 1]{belgioioso:grammatico:17cdc}, thus we discuss it briefly next.

\smallskip
\textit{Algorithm $4$ as a fixed-point iteration:}
In compact form, the dynamics generated by Algorithm $4$ read as 
\begin{align} \label{eq:FBF-FPI}
\bs \omega^{k+1}=
R_{\text{FBF}}( \bs \omega^k),
\end{align}
where $\bs \omega^k = \col(\bs x^k,  \lambda^k)$ is the stacked vector of the primal-dual variables and $R_{\text{FBF}}$ is the so-called FBF operator, i.e.,
\begin{multline*}
R_\text{FBF}:= \proj_{\bs \Omega \times \R^m_{\geq 0}} \circ \left( (\Id-\Psi^{-1} U_1) \right. \\
\left.
\circ \, \textrm{J}_{\Psi^{-1} U_2} \circ(\Id-\Psi^{-1} U_1) + \Psi^{-1} U_1 \right),
\end{multline*}
where $U_1$ and $U_2$ characterize an alternative splitting of the mapping $T$ in \eqref{eq:T}, i.e.,  $T=U_1+U_2$, where
\begin{align} \label{eq:U1}
U_1: 
\bs \omega &\mapsto
\col(F(\bs x),b) 
 +S \bs \omega,
\\
\label{eq:U2}
U_2: 
\bs \omega  &\mapsto 
\big(\nc_{\bs \Omega}(\bs x) + G(\bs x)\big) \times
\nc_{\R^m_{\geq 0}}(\lambda) ,
\end{align}
and $\Psi$ is the preconditining matrix, here defined as
\begin{align} \label{eq:PhiFBF}
\Psi := \begin{bmatrix}
\bar{\alpha}^{-1} \otimes I_n &  \0 \\
 \0 &  N \beta^{-1}I_m
\end{bmatrix}.
\end{align}
When the mappings $U_1$ and $U_2$ are maximally monotone (which can be proven when Assumption \ref{ass:monot} holds true by following a similar technical reasoning of that in Lemma \ref{lem:U-Bmon}), $U_1$ is Lipschitz continuous (Assumption \ref{ass:monot}) and the step sizes in the main diagonal of $\Psi$ are set small enough, then
%the FBF operator $R_{\text{FBF}}$ is quasi-nonexpansive w.r.t. the $\Phi$-induced norm, i.e., $\|\cdot \|_{\Phi}$. Hence, 
the fixed-point iteration \eqref{eq:FBF-FPI} converges to some $\bs \omega^*:= \col(\bs x^*, \bs \lambda^*) \in \fix(R_{\text{FBF}}) = \zer(T) \cap (\bs \Omega \times \R^m_{\geq 0}) = \zer(T)$ \cite[Th.~26.17]{bauschke2017convex}, where $\bs x^*$ is a v-GNE. We refer to \cite[Th. 2]{belgioioso:grammatico:17cdc}, for a complete convergence analysis which is applicable to Algorithm $4$.

\smallskip
\begin{remark}[Double communication round]
\label{rem:2CR}
At each central and local update of Algorithm $4$ a communication takes place. Hence, each iteration of Algorithm $4$ requires two communication rounds between the agents and central operator.
{\hfill $\square$}
\end{remark}

Finally, we note that an inertial version of Algorithm 4, without the extra projections (in steps 3, 4) and with fully coordinated step sizes that match across the agents and the central coordinator can be derived based on \cite[Th. 4]{boct2016inertial}.

%===========================================================================================
\section{Generalized Nash equilibrium seeking: Advanced algorithms} \label{sec:A-GNE_alg}

In this section, we design two novel semi-decentralized GNE seeking algorithms obtained by solving the monotone inclusion in \eqref{eq:Minc} with different zero-finding methods: the forward-reflected-backward splitting \cite{malitsky2020forward} and, for a particular subclass of aggregative games with linear-coupling functions, the proximal-point method with (alternated) inertia. The main features of the proposed algorithms, e.g. convergence guarantees and communication requirements, are summarized and compared with those of the existing methods in Table \ref{tab:1}.

%===========================================================================================

\subsection{(Inertial) Forward-reflected-backward algorithm \label{sec:FoRB}}
In this section, we present a single-layer, single communication round algorithm for monotone generalized aggregative games that overcomes the technical and computational limitations of all the algorithms in Section \ref{sec:AMAG}. The design of the proposed method (i.e., Algorithm 5) is based on the \textit{forward-reflected-backward splitting} (FoRB) recently proposed in \cite{malitsky2020forward} to find a zero of the sum of two maximally monotone operators, one of which is single-valued and Lipschitz continuous.

\begin{figure}[h]
\hrule
\smallskip
\textsc{Algorithm $5$}: Inertial FoRB (I-FoRB)
\smallskip
\hrule
\medskip

\textbf{Initialization}: $\theta \in [0, 1/3)$ and $\delta > 2 \ell /(1-3\theta)$, with $\ell$ as in Assumption \ref{ass:monot}; $\forall i \in \mc I$, $x_i^0, x_i^{-1} \in \R^n$ and $0<\alpha_i \leq \textstyle (\norm{A_i} + \delta)^{-1}$; $\lambda^0, \lambda^{-1} \in \R^m_{\geq 0}$, $0<\beta\leq ( \frac{1}{N} \sum_{i=1}^N \norm{A_i} +\frac{1}{N}\delta )^{-1}$.

\medskip
\noindent
\textbf{Iterate until convergence}: 
\begin{enumerate}[1.]
\item Local: Strategy update, for all $i \in \mc I$:\\[.5em]
\hspace*{-1.2em}
$
\begin{array}{l}
r_i^{k } = 2\nabla_{x_i}f_i(x_i^{k},\avg(\bs x^{k})) - \nabla_{x_i}f_i(x_i^{k-1},\avg(\bs x^{k-1}))\\[.4em]
x_i^{k+1} \!= \prox_{\alpha_i g_i + \iota_{\Omega_i}} \! \big(  x_i^k -  \alpha_i ( r_i^k + A_i^\top \lambda^k) + \theta(x_i^{k} -  x_i^{k-1}) \big) \\[.4em]
%
%&\tilde{x}_i^{k+1} = x_i^{k+1} +\theta(x_i^{k+1} -  x_i^k),\\[.2em] 
%
d_i^{k+1} \!= 2A_i x_i^{k+1} - A_i x_i^k - b_i
\end{array}
$\\

\item Central coordinator: dual variable update\\[.5em]
\hspace*{-1.2em}
$ \begin{array}{l}
\lambda^{k+1} = \proj_{\R^m_{\geq 0}} \big(\lambda^k + \beta \, \avg(\bs d^{k+1}) + \theta (\lambda^{k} - \lambda^{k-1}) \big)
\end{array}
%
%\tilde \lambda^{k+1} & = \lambda^{k+1} + \theta (\lambda^{k+1} - \lambda^{k}).
$\\
\end{enumerate}
\hrule
\end{figure}

Also for Algorithm $5$, we describe the generated dynamics as a compact inertial iteration, step
\begin{multline}  \label{eq:FP-FoRB}
\bs \omega ^{k+1} = (\Id + \Phi^{-1} T_2)^{-1}
(\bs \omega^k 
- 2 \Phi^{-1}T_1(\bs \omega ^{k})\\
+ \Phi^{-1} T_1(\bs \omega ^{k-1})
+ \theta(\bs \omega^{k} - \bs \omega^{k-1})
),
\end{multline}
where $\bs \omega^k = \col(\bs x^k, \lambda^k)$ is the stacked vector of primal-dual variables, the components mappings $T_1$, $T_2$  as in \eqref{eq:T1}$-$\eqref{eq:T2} and the preconditioning $\Phi$ as in \eqref{eq:Phi}.
If the step sizes in the main diagonal of $\Phi$ are chosen small enough, then the iteration \eqref{eq:FP-FoRB}, namely, the inertial FoRB splitting \cite [Corollary~4.4]{malitsky2020forward} on the operators $\Phi^{-1} T_1$ and $\Phi^{-1} T_2$, converges to some $\bs \omega^*:= \col(\bs x^*, \bs \lambda^*) \in \zer(T_1+T_2)=\zer(T)$, where $\bs x^*$ is a v-GNE. 

\smallskip
Our first main result is to establish global convergence of Algorithm $5$ to a v-GNE when the mapping $F$ is maximally monotone and Lipschitz continuous (Assumption \ref{ass:monot}) and the step sizes are chosen small enough.

\smallskip
\begin{theorem} \label{th:FoRB}
(\textit{Convergence of FoRB (Algorithm $5$)})
Let Assumptions \blue{\ref{ass:CFn},} \ref{ass:CCF}, \ref{ass:monot}, \ref{ass:vGNEex} hold true. The sequence $( \col(\bs x^k, \lambda^k))_{k \in \bb N}$ generated by Algorithm $5$, globally converges to some $\col(\bs x^*,\lambda^*) \in \zer(T)\neq \varnothing$, where $\bs x^*$ is a v-GNE.
{ \hfill $\square$ }
\end{theorem}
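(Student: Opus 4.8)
The plan is to reduce the convergence of Algorithm~5 to the convergence of the inertial forward-reflected-backward iteration \cite[Corollary~4.4]{malitsky2020forward}, applied to the two preconditioned operators $\Phi^{-1}T_1$ and $\Phi^{-1}T_2$ with respect to the $\Phi$-induced inner product $\langle \cdot, \cdot \rangle_\Phi$. First I would verify that $\Phi$ in \eqref{eq:Phi} is positive definite when the step sizes $\alpha_i$ and $\beta$ are chosen as in the initialization: this is a standard Gershgorin/Schur-complement argument using $\alpha_i \le (\|A_i\| + \delta)^{-1}$ and $\beta \le (\frac1N\sum_i\|A_i\| + \frac1N\delta)^{-1}$, which makes $\Phi$ diagonally dominant hence $\Phi \succ 0$; consequently $\langle \cdot,\cdot\rangle_\Phi$ is a genuine inner product and $\|\cdot\|_\Phi$ a norm. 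Then I would record, as already noted after \eqref{eq:FP-FoRB}, that the iteration \eqref{eq:FP-FoRB} is \emph{exactly} the unrolled form of Algorithm~5: expanding the resolvent $(\Id + \Phi^{-1}T_2)^{-1}$ componentwise and using the block structure of $T_2$ in \eqref{eq:T2} recovers the $\prox_{\alpha_i g_i + \iota_{\Omega_i}}$ step and the projection $\proj_{\R^m_{\ge 0}}$, with $A_i^\top \lambda^k$ and the reflected constraint violation $\avg(\bs d^{k+1})$ appearing through the skew term $S$. This algebraic identification is routine but must be done carefully so that the coupling terms land in the right place.

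Next I would check the two structural hypotheses required by \cite[Corollary~4.4]{malitsky2020forward}: (a) $\Phi^{-1}T_2$ is maximally monotone in $\|\cdot\|_\Phi$, and (b) $\Phi^{-1}T_1$ is single-valued, maximally monotone, and Lipschitz in $\|\cdot\|_\Phi$ with a computable constant. Claim~(a) follows from Lemma~\ref{lem:U-Bmon} (which gives maximal monotonicity of $T_2$ in the standard metric) together with the standard fact that premultiplying a maximally monotone operator by $\Phi^{-1}$ preserves maximal monotonicity under the $\Phi$-metric \cite[Lemma~7]{yi2019operator}. For claim~(b), $T_1: \bs\omega \mapsto \col(F(\bs x), b)$ is single-valued, and monotone/Lipschitz by Assumption~\ref{ass:monot} (the constant $b$ and the zero cross-terms do not affect either property); the $\Phi$-Lipschitz constant of $\Phi^{-1}T_1$ is then bounded by $\ell_\Phi := \|\Phi^{-1/2} \mathrm{diag}(F\text{-part}) \Phi^{-1/2}\|$-type quantity, and the point is only that it is finite and that the step-size condition $\delta > 2\ell/(1-3\theta)$ together with $\theta \in [0,1/3)$ forces the product of the FoRB step size (here normalized to $1$ in the preconditioned coordinates) with $\ell_\Phi$ to lie in the range mandated by \cite[Corollary~4.4]{malitsky2020forward}. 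I would make this precise by showing that the choice of $\delta$ dominates the relevant block norms of $\Phi^{-1}T_1$.

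With (a) and (b) in hand, \cite[Corollary~4.4]{malitsky2020forward} yields that the sequence $(\bs\omega^k)_{k\in\bb N} = (\col(\bs x^k,\lambda^k))_{k\in\bb N}$ converges to some $\bs\omega^\star \in \zer(\Phi^{-1}T_1 + \Phi^{-1}T_2) = \zer(\Phi^{-1}T) = \zer(T)$, where the last equality uses $\Phi \succ 0$. Finally, $\zer(T) \ne \varnothing$ by Assumption~\ref{ass:vGNEex}, and by Proposition~\ref{pr:UvGNE}(iv)$\Leftrightarrow$(i), writing $\bs\omega^\star = \col(\bs x^\star, \lambda^\star)$, the primal component $\bs x^\star$ is a v-GNE of the game in \eqref{eq:Game}, which completes the proof. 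The main obstacle I anticipate is item~(b): carefully tracking how the $\Phi$-metric rescales the Lipschitz constant of $F$ and verifying that the stated $\delta$-threshold is exactly what \cite[Corollary~4.4]{malitsky2020forward} needs after this rescaling — everything else is either cited verbatim or a mechanical unrolling of the resolvent.
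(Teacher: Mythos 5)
Your proposal matches the paper's own proof in Appendix B essentially step for step: the same identification of Algorithm~5 with the preconditioned inertial FoRB iteration \eqref{eq:FP-FoRB}, the same Gershgorin-type argument giving $\Phi \succeq \delta I \succ 0$ and hence $\|\Phi^{-1}\| \le \delta^{-1}$, the same verification that $\Phi^{-1}T_1$, $\Phi^{-1}T_2$ are maximally monotone in the $\Phi$-norm with $\Phi^{-1}T_1$ being $\tau$-Lipschitz for $\tau = \delta^{-1}\ell < (1-3\theta)/2$, and the same appeal to \cite[Cor.~4.4]{malitsky2020forward} followed by Proposition~\ref{pr:UvGNE}. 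The only cosmetic difference is that the paper bounds the $\Phi$-metric Lipschitz constant simply via $\|\Phi^{-1}\|\le\delta^{-1}$ composed with the $\ell$-Lipschitz continuity of $T_1$, rather than through the $\Phi^{-1/2}(\cdot)\Phi^{-1/2}$ rescaling you sketch; both yield the same threshold.
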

\begin{proof}
See Appendix \ref{proof:th:FoRB}.
\end{proof}

\smallskip
\begin{remark}[Single communication round] Algorithms $4$ (FBF) and $5$ (FoRB) are the only single-layer, fixed-step algorithms for GNE seeking in (non-cocoercive, non-striclty) monotone generalized (aggregative) games. The main advantage of Algorithm $5$ is that it requires only one communication round (between the agents and the central coordinator) per iteration instead of the two required by the FBF, see also Remark \ref{rem:2CR}.
{\hfill $\square$}
\end{remark}

\subsection{Customized preconditioned proximal-point algorithm} \label{sec:cPPP}
In this subsection, we focus on a particular class of aggregative games, where the cost functions have the form
\begin{equation} \label{eq:CF-SS}
J_i(x_i,\bs x_{-i}) = g_i(x_i)+ ({C \,\avg(\bs x) )}^\top x_i,
\end{equation}
where $C = C^\top$ is a symmetric matrix.
We emphasize that this particular structure arises in several engineering applications, where $x_i$ denotes the usage level of a certain commodity, whose disutility is modeled by the cost function $g_i(x_i)$, while the term $C \avg(\bs x)$ represents a price function that linearly depends on the average usage level of the population, see
 \cite{chen:li:louie:vucetic:14,deori2018price,kristoffersen2011optimal, atzeni2013demand,estrella2019shrinking,belgioioso2020energy} for some application examples.

\smallskip
The next statement shows that aggregative games with such special structure are generalized potential games \cite[Def. 2.1]{facchinei2011decomposition}.

\smallskip
\begin{lemma} \label{lem:Pot}
Consider monotone aggregative games with agent cost functions as in \eqref{eq:CF-SS} and $C = C^\top$. There exists a continuous function $\phi:\R^{nN} \rightarrow {\R}$ such that $\partial \phi = P$, with $P = \prod_{i =1}^N \partial_{x_i} \, J_i \left( x_i, \,  \bs x_{-i} \right)$.
{\hfill $\square$}
\end{lemma}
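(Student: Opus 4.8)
The plan is to exhibit an explicit potential function $\phi$ and then verify $\partial\phi = P$ component-wise. First I would recall that for cost functions as in \eqref{eq:CF-SS}, the pseudo-subdifferential decomposes as $P = G + F$ with $G(\bs x) = \prod_{i=1}^N \partial g_i(x_i)$ and $F$ the pseudo-gradient of the smooth coupling part. Using \eqref{eq:grad_agg}, the $i$-th block of $F$ is $\nabla_{x_i}\big[(C\,\avg(\bs x))^\top x_i\big]$, which (treating $x_i$ both as the first argument and through the average) equals $C\,\avg(\bs x) + \tfrac{1}{N}C x_i$ by symmetry of $C$. So $F(\bs x) = \col\big(\{C\,\avg(\bs x) + \tfrac1N C x_i\}_{i=1}^N\big)$.

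Next I would guess the candidate potential. A natural choice is
\begin{align*}
\phi(\bs x) := \sum_{i=1}^N g_i(x_i) + \tfrac{N}{2}\,\avg(\bs x)^\top C\,\avg(\bs x).
\end{align*}
The separable part $\sum_i g_i(x_i)$ is continuous and convex, with subdifferential $\prod_i \partial g_i(x_i) = G(\bs x)$ by \cite[Prop.~16.9]{bauschke2017convex} (sum rule for separable functions). The quadratic part $q(\bs x) := \tfrac{N}{2}\avg(\bs x)^\top C\avg(\bs x) = \tfrac{1}{2N}\big(\sum_j x_j\big)^\top C\big(\sum_j x_j\big)$ is smooth, and I would compute its gradient block-wise: $\nabla_{x_i} q(\bs x) = \tfrac1N C\sum_j x_j = C\,\avg(\bs x)$. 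That does not quite match $F$ — it is missing the $\tfrac1N C x_i$ term. So I would adjust the candidate, adding $\tfrac{1}{2N}\sum_i x_i^\top C x_i$:
\begin{align*}
\phi(\bs x) := \sum_{i=1}^N g_i(x_i) + \tfrac{1}{2N}\Big(\sum_{j=1}^N x_j\Big)^\top C\Big(\sum_{j=1}^N x_j\Big) + \tfrac{1}{2N}\sum_{i=1}^N x_i^\top C x_i,
\end{align*}
which is still continuous (indeed continuously differentiable on its smooth part). Then $\nabla_{x_i}$ of the last two terms is $C\,\avg(\bs x) + \tfrac1N C x_i$, matching the $i$-th block of $F$ exactly (using $C=C^\top$). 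Finally I would invoke the subdifferential sum rule \cite[Cor.~16.48]{bauschke2017convex} (as already used in the excerpt to split $P=G+F$): since the quadratic part is everywhere differentiable, $\partial\phi(\bs x) = \prod_i\big(\partial g_i(x_i) + C\avg(\bs x) + \tfrac1N C x_i\big) = G(\bs x) + F(\bs x) = P(\bs x)$.

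The main obstacle — really the only subtle point — is getting the gradient of the coupling term right, because the aggregative structure means $x_i$ appears twice (directly and inside $\avg(\bs x)$), exactly as flagged in \eqref{eq:grad_agg}; a naive guess for $\phi$ will be off by the diagonal $\tfrac1N C x_i$ correction, so one must choose $\phi$ to absorb it. Note I would not need convexity of $\phi$ for the statement as written (it only asks for a continuous $\phi$ with $\partial\phi = P$, interpreting $\partial$ as the relation in \eqref{eq:PsGr} rather than requiring convex subdifferentiability); nonetheless, under the stated monotonicity of $P$ one gets that $\phi$ is in fact convex, so the claim is consistent. Everything else is a routine differentiation check.
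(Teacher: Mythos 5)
Your proposal is correct and takes essentially the same route as the paper: the paper defines $\phi(\bs x) = \sum_{i} g_i(x_i) + \tfrac{1}{2}\bs x^\top\big(\tfrac{1}{N}(I_N+\1_N\1_N^\top)\otimes C\big)\bs x$, which is algebraically identical to your final candidate (your two quadratic terms are exactly the $\1_N\1_N^\top\otimes C$ and $I_N\otimes C$ pieces), and verifies $\partial\phi = P$ by the same block-wise differentiation. Your explicit handling of the $\tfrac{1}{N}Cx_i$ correction coming from $x_i$ appearing inside $\avg(\bs x)$ is precisely the point the paper leaves as ``easy to verify.''
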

\begin{proof}
For aggregative games with linear coupling functions as in \eqref{eq:CF-SS}, the pseudo-subdifferential $P$ in \eqref{eq:PsGr} reads as
\begin{equation} \textstyle \label{eq:PS_cPP}
P = \prod_{i=1}^N \partial_{x_i} g_i +  \frac{1}{N} (I_N + \1_N \1_N^\top) \otimes C.
\end{equation}
Let $\phi(\bs x) := \sum_{i=1}^n g_i(x_i) + \frac{1}{2 } \bs x^\top \left( \frac{1}{ N}(I_N + \1_N \1_N^\top) \otimes C \right) \bs x$, then it is easy to verify that $\partial \phi = P$.
\end{proof}

\smallskip
It follows by Lemma \ref{lem:Pot} that a v-GNE corresponds to a solution to the optimization problem $\argmin \,  \phi(\bs x) \text{ s.t. } \bs x \in \bs{\mc X}$.
However, in many practical setups, a centralized solution to this problem is not viable since it would require a high degree of coordination among selfish agents and also an ``unbearable overload of information exchange'' \cite[\S 3.3]{facchinei2011decomposition}.
Moreover, distributed optimization algorithms, see e.g. \cite{bertsekas1989parallel}, can only deal with feasible sets $\bs{\mc X}$ in \eqref{eq:G} with Cartesian product structure (namely, the case of \textit{non}-generalized games) and cost  functions with a separable form. This motivates us to investigate a customized algorithm for aggregative games with cost functions as in \eqref{eq:CF-SS}, which we summarize in Algorithm~6 and denote as I-cPPP (or cPPP, when $\theta^k \equiv 0$).

\begin{figure}[t]
\hrule
\smallskip
\textit{Algorithm $6$}: Inertial customized PPP (I-cPPP)\smallskip
\hrule
\medskip
\noindent
\textbf{Initialization}: $0 \leq \theta^k \leq \theta^{k+1} \leq \bar \theta < 1/3$
for all $k \geq 0$;
for all $ i \in \mc I$, $x_i^0 \in \R^{n}$, $0<\alpha_i < \textstyle \norm{A_i}+\frac{N-1}{N}\norm{C}$; $\lambda^0 \in \R^m_{\geq 0}$, $0<\beta<( \frac{1}{N} \sum_{i=1}^N \norm{A_i})^{-1}$.

\medskip
\noindent
\textbf{Iterate until convergence}:\\
1. Local: Strategy update, for all $i \in \mc I$:
\begin{align}
\textstyle \nonumber
&y_i^k = \tilde x_i^k - \alpha_i \big(C\,  \avg(\tilde{\bs x}^k)+A_i^\top \tilde{\lambda}^k \big)\\
\nonumber
& x^{k+1}_i = \textstyle
 \underset{z \in \Omega_i}{\argmin} \;
 g_i(z) +\frac{1}{2\alpha_i} \norm{z - y_i^k}^2
 +\frac{1}{N}{\left( C (z  -  \tilde x_i^k) \right)}^\top z \\[.2em]
 \label{eq:inertialXcPPP}
&\tilde{x}_i^{k+1} = x_i^{k+1} + \theta^k(x_i^{k+1}-x_i^{k})\\[.2em]
&d_i^{k+1} = 2A_i x_i^{k+1} - A_i \tilde x_i^k - b_i
 \nonumber
\end{align}

\noindent
2. Central Coordinator: dual variable update
\begin{align} \nonumber
\lambda^{k+1} &= \textstyle
 \proj_{\R^m_{\geq 0}}\left(\tilde \lambda^k
 +\beta \, \avg(\bs d^{k+1}) \right) \hspace*{8.2em}\\
  \tilde{\lambda}^{k+1} &= \lambda^{k+1} + \theta^k(\lambda^{k+1}-\lambda^{k})
  \label{eq:inertialLcPPP}
\end{align}
\hrule
\end{figure}

\smallskip
The next theorem establishes global convergence of Algorithm $6$ to a v-GNE of aggregative games with linear coupling functions as in \eqref{eq:CF-SS}, when the associated PS $P$ is maximally monotone, as postulated next.

\smallskip
\begin{assumption} \label{ass:Pmonot}
The pseudo-subdifferential mapping $P$ in \eqref{eq:PS_cPP} is maximally monotone over $\bs \Omega$.
{\hfill $\square$}
\end{assumption}

\smallskip
We remark that this assumption is less strict than Assumption \ref{ass:monot}, since the monotonicity of the coupled part $F$ is not required. Necessary and sufficient conditions for the (strong) monotonicity of $P$ for this class of aggregative games are discussed in \cite[Cor.\ 1]{belgioioso2017convexity}. For instance, $C \succcurlyeq 0$ is sufficient to guarantee a maximally monotone PS mapping.

\smallskip
\begin{theorem} \label{th:CPPP}
(\textit{Convergence of I-cPPP (Algorithm $6$)})
 Consider the game in \eqref{eq:Game} with cost functions in \eqref{eq:CF-SS}. Let Assumptions \blue{\ref{ass:CFn},} \ref{ass:CCF}, \ref{ass:vGNEex}, \ref{ass:Pmonot} hold. Then,  the sequence $( \col(\bs x^k, \lambda^k))_{k \in \bb N}$ generated by Algorithm $6$, globally converges to some $\col(\bs x^*,\lambda^*) \in \zer(T)$, where $\bs x^*$ is a v-GNE.
{ \hfill $\square$ }
\end{theorem}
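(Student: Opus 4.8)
The plan is to recognize Algorithm~$6$ as an \emph{inertial preconditioned proximal-point} iteration for the operator $T$ in \eqref{eq:T} (with $P$ the pseudo-subdifferential \eqref{eq:PS_cPP}), and then to invoke the convergence theory of inertial iterations of firmly nonexpansive operators, as for Algorithm~$3$.

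First I would put Algorithm~$6$ in fixed-point form. Writing the first-order optimality condition of the local subproblem for $x_i^{k+1}$ and using $C=C^\top$, the customized term $\tfrac1N(C(z-\tilde x_i^k))^\top z$ contributes $\tfrac2N Cz-\tfrac1N C\tilde x_i^k=\tfrac1N Cz+\tfrac1N C(z-\tilde x_i^k)$ to the subgradient; substituting $y_i^k$ then shows that $x_i^{k+1}$ satisfies
\begin{multline*}
0\in\partial g_i(x_i^{k+1})+\nc_{\Omega_i}(x_i^{k+1})+\tfrac1N Cx_i^{k+1}\\
+C\,\avg(\tilde{\bs x}^k)+A_i^\top\tilde\lambda^k+\big(\tfrac1{\alpha_i}I_n+\tfrac1N C\big)(x_i^{k+1}-\tilde x_i^k).
\end{multline*}
Stacking over $i\in\mc I$, adjoining the optimality condition of the dual projection (recall $N\avg(\bs d^{k+1})=2A\bs x^{k+1}-A\tilde{\bs x}^k-b$), and using $\avg(\bs x^{k+1})-\avg(\tilde{\bs x}^k)=\tfrac1N\sum_j(x_j^{k+1}-\tilde x_j^k)$, I would arrive at $0\in T(\bs\omega^{k+1})+\Phi(\bs\omega^{k+1}-\tilde{\bs\omega}^k)$, i.e.\ $\bs\omega^{k+1}=\mathrm{J}_{\Phi^{-1}T}(\tilde{\bs\omega}^k)$, with $\bs\omega^k=\col(\bs x^k,\lambda^k)$, $\tilde{\bs\omega}^k=\bs\omega^k+\theta^k(\bs\omega^k-\bs\omega^{k-1})$ and
\[
\Phi=\begin{bmatrix}\bar\alpha^{-1}\otimes I_n+\tfrac1N(I_N-\1_N\1_N^\top)\otimes C & -A^\top\\[.2em] -A & N\beta^{-1}I_m\end{bmatrix}.
\]
The bookkeeping subtlety is that the primal block of $\Phi$ is \emph{not} block diagonal: the $-\tfrac1N\1_N\1_N^\top\otimes C$ term appears because $x_i$ enters $\avg(\bs x)$ in \eqref{eq:CF-SS}, and it is precisely what the customized quadratic term in the subproblem produces; verifying this cancellation is the bulk of the computation.

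Second, I would verify $\Phi\succ0$ under the step sizes of Algorithm~$6$. Viewed as an $(N+1)\times(N+1)$ block matrix, $\Phi$ has primal diagonal blocks $\alpha_i^{-1}I_n$, primal off-diagonal blocks $-\tfrac1N C$, primal--dual blocks $-A_i^\top$ and dual block $N\beta^{-1}I_m$; a block diagonal-dominance argument then gives $\Phi\succ0$ precisely when $\alpha_i^{-1}>\|A_i\|+\tfrac{N-1}{N}\|C\|$ for every $i$ (the term $\tfrac{N-1}{N}\|C\|$ being the total norm of the $N-1$ off-diagonal blocks) and $N\beta^{-1}>\sum_i\|A_i\|$, which is exactly what Algorithm~$6$ assumes. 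I expect this to be the most delicate step, since $C$ is only symmetric and may be indefinite---only $P$ is postulated monotone (Assumption~\ref{ass:Pmonot})---so positive definiteness of $\Phi$ has to be read off the block structure rather than termwise.

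Finally, I would conclude by standard operator-splitting arguments. The mapping $T$ is maximally monotone: $\nc_{\bs\Omega}+P$ is so as the sum of $\nc_{\bs\Omega}$ and $P$ (Assumption~\ref{ass:Pmonot}), since $\dom P=\R^{nN}$ supplies the qualification of \cite[Cor.~25.5]{bauschke2017convex}, and concatenating with $\nc_{\R^m_{\geq 0}}$ and adding the skew-symmetric matrix $S$ of \eqref{eq:SandPhi} preserves maximal monotonicity, exactly as in the proof of Lemma~\ref{lem:U-Bmon}. Hence $\Phi^{-1}T$ is maximally monotone in $\langle\cdot,\cdot\rangle_\Phi$, its resolvent $\mathrm{J}_{\Phi^{-1}T}$ is firmly nonexpansive ($\tfrac12$-averaged) in $\|\cdot\|_\Phi$, and $\fix(\mathrm{J}_{\Phi^{-1}T})=\zer(\Phi^{-1}T)=\zer(T)\neq\varnothing$ (Assumption~\ref{ass:vGNEex}, Proposition~\ref{pr:UvGNE}). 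As \eqref{eq:inertialXcPPP}--\eqref{eq:inertialLcPPP} carry out the inertial extrapolation in this fixed, $\Phi$-weighted geometry, the convergence theory of inertial KM-type (equivalently, inertial proximal-point) iterations of firmly nonexpansive operators \cite{mainge2008convergence} applies: under $0\le\theta^k\le\theta^{k+1}\le\bar\theta<1/3$ the sequence $(\bs\omega^k)_{k\in\bb N}$ converges to some $\bs\omega^*=\col(\bs x^*,\lambda^*)\in\fix(\mathrm{J}_{\Phi^{-1}T})=\zer(T)$ (for $\theta^k\equiv0$ this is Banach--Picard iteration of the averaged $\mathrm{J}_{\Phi^{-1}T}$, \cite[Prop.~5.16]{bauschke2017convex}). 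By Proposition~\ref{pr:UvGNE}, $\bs x^*$ is a v-GNE, which is the claim.
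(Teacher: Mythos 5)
Your proposal is correct and follows essentially the same route as the paper's proof: recasting Algorithm~6 as the inertial proximal-point iteration $\bs\omega^{k+1}=\mathrm{J}_{\Phi_{\text{C}}^{-1}T}(\tilde{\bs\omega}^k)$ with the customized preconditioner $\Phi_{\text{C}}$ in \eqref{eq:Phi_C} (your $\Phi$ is exactly this matrix), establishing $\Phi_{\text{C}}\succ 0$ by a block Gershgorin/diagonal-dominance argument, and concluding via maximal monotonicity of $T$, firm nonexpansiveness of the preconditioned resolvent, and the convergence theory of inertial proximal-point iterations with nondecreasing $\theta^k<1/3$. The only differences are cosmetic (the paper cites \cite{alvarez2001inertial} where you cite \cite{mainge2008convergence}, and it writes out the stacked inclusions explicitly), so no further comment is needed.
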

\begin{proof}
See Appendix \ref{proof:Th.CPPP}.
\end{proof}

\smallskip
\begin{remark}[v-GAE seeking via I-cPPP]
As for gradient-based methods, to compute a v-GNE via Algorithm~$6$, the agents must know the population size, $N$. However, if an approximate solution, i.e., a v-GAE, is equally desirable, this requirement can be relaxed by removing the correction term $\frac{1}{N}{\left( C (z  -  \tilde x_i^k) \right)}^\top z$ in the local update of each agent $i \in \mc I$:
$x^{k+1}_i \!\!=\! \textstyle
\prox_{\alpha_i g_i + \iota_{\Omega_i}}
 \big(\tilde x_i^k-\alpha_i (C \avg(\tilde{\bs x}^k) \!+\! A_i^\top \tilde \lambda^k)\big)$.
{\hfill $\square$}
\end{remark}

\medskip
\textit{Algorithm $6$ as a fixed-point iteration}: 
In compact form, the dynamics generated by Algorithm $6$ read as the inertial fixed-point iteration
\begin{subequations} \label{eq:CPPP}
\begin{align} 
\tilde{\bs \omega}^k &= \bs \omega^k + \theta^k(\bs \omega^k-\bs \omega^{k-1})\\
\bs \omega^{k+1} &= \mathrm{J}_{\Phi_{\text{C}}^{-1} T}
(\tilde{\bs \omega}^k ) , \label{eq:CPPPb}
\end{align}
\end{subequations}
where $\bs \omega^k = \col(\bs x^k,  \lambda^k)$ is the stacked vector of primal-dual iterates, $\mathrm{J}_{\Phi_{\text{C}}^{-1} T}=(\Id+\Phi_{\text{C}}^{-1} T)^{-1}$ is the generalized resolvent operator of the mapping $T$ in \eqref{eq:T} with preconditioning matrix
\begin{equation} \textstyle \label{eq:Phi_C}
\Phi_{\text{C}} :=
\Phi + \left[
\begin{smallmatrix}
\frac{1}{N} (I_N - \1_N \1_N^\top) \otimes C & \0\\
\0 & \0
\end{smallmatrix} \right],
\end{equation}
with $\Phi$ as in \eqref{eq:Phi}. The iteration in \eqref{eq:CPPP} corresponds to the inertial proximal-point method in \cite{alvarez2001inertial} applied to the mapping $T$ preconditioned with $\Phi_{\text{C}}$.
When $T$ is maximally monotone (which follows by Assumption \ref{ass:Pmonot}) and the step sizes in the main diagonal of $\Phi$ are set such that $\Phi_{\text{C}}\succ 0$, then $J_{\Phi_{\text{C}}^{-1} T}$ is firmly nonexpansive ($\frac{1}{2}-$averaged) w.r.t. the $\Phi_{\text{C}}-$induced norm, i.e., $\|\cdot \|_{\Phi_{\text{C}}}$. Moreover, if the inertial parameter $\theta^k$ is non-decreasing and small enough, then the inertial fixed-point iteration \eqref{eq:CPPP} converges to some $\bs \omega^*:= \col(\bs x^*, \bs \lambda^*) \in \fix(J_{\Phi_{\text{C}}^{-1} T}) = \zer(T)$ \cite[Th.~2.1, Prop.~2.1]{alvarez2001inertial}, where $\bs x^*$ is a v-GNE. We provide the full convergence analysis in Appendix \ref{proof:Th.CPPP}.

\smallskip
\begin{remark}[cPPP is a single-layer algorithm] 
Both the PPP (Algorithm $3$) and our cPPP (Algorithm $6$) rely on the same fixed-point iteration, which is generated by the proximal-point method. However, while the PPP is double-layer, namely, it requires the solution of a sub-game at each iteration, cPPP is single-layer. The idea behind the cPPP is in fact to exploit the special structure of the pseudo-subdifferential $P$ in \eqref{eq:PS_cPP} to customize the preconditioning matrix, $\Phi_\text{C}$, and in turn solve the inner loop of the PPP with a single implicit iteration, namely, the parallel solution of $N$ local, decoupled, strongly convex optimization problems. 
Our cPPP is devised specifically for the subclass of aggregative games presented in this section, thus its applicability is mainly limited to this class of games.
{\hfill $\square$}
\end{remark}

\smallskip
\begin{remark}[Fully-uncoordinated step sizes] 
Unlike all the previously presented gradient-based algorithms, the choice of the local step sizes and inertial parameters in Algorithm 6 is based on local information only\footnote{except for the population size $N$, which is implicitly necessary for computing an exact v-GNE.}. To the best of our knowledge, this is the first and only inertial, fixed-step v-GNE seeking algorithm that enjoys this important property.
{\hfill $\square$}
\end{remark}

\smallskip
\subsubsection*{Over-relaxed cPPP (Algorithm 6B)}
To conclude this section, we present the over-relaxed variant of cPPP, i.e., or-cPPP. This new method is obtained by substituting the inertial steps of primal and dual variables in Algorithm 6, i.e., \eqref{eq:inertialXcPPP} and \eqref{eq:inertialLcPPP}, respectively, with the relaxation steps
\begin{align}
\tilde{x}_i^{k+1} &= \tilde x_i^{k} + \theta^k (x_i^{k+1}-\tilde x_i^{k}),\\
\tilde{\lambda}_i^{k+1} &= \tilde \lambda^{k} + \theta^k(\lambda^{k+1}- \tilde \lambda^{k}),
\end{align}
where the relaxation sequence $\left(\theta^k\right)_{k \in \bb N}$ must be chosen s.t.
\begin{align} \textstyle \label{eq:orParam}
\theta^k \in [0,2] \; \forall k \in \bb N, \quad \sum_{k \in \bb N} \theta^k(2- \theta^k)=\infty.
\end{align}

Similarly to Algorithm 6, or-cPPP can be compactly cast as the following Krasnosel'skii-Mann fixed-point iteration:
\begin{align} 
\tilde{\bs \omega}^{k+1} &= \tilde{\bs \omega}^{k} + \theta^k (\text J_{\Phi_{\text{C}}^{-1} T}(\tilde{\bs \omega}^k ) - \tilde{\bs \omega}^k). \label{eq:CPPP-or}
\end{align}
Thus, its convergence readily follows by \cite[Prop. 5.16]{bauschke2017convex}, since the generalized resolvent $ J_{\Phi_{\text{C}}^{-1}T} $ is $\frac{1}{2}-$averaged w.r.t. the $\Phi_{\text{C}}-$induced norm \cite[Prop. 23.8]{bauschke2017convex}. While there is no interest in doing under-relaxation with $\theta^k$ less than 1, over-relaxation with $\theta^k$ larger than 1 (close to 2) may be beneficial for the convergence speed, as often observed in practice. Interestingly, the choice of the over-relaxation steps $\theta^k$ in \eqref{eq:orParam} is independent from the properties of the mapping $T$.

%===========================================================================================
\subsection{Alternating inertial steps for averaged operators}\label{sec:AI}

In this subsection, we propose an alternating inertial scheme which is applicable to the algorithms in Sections \ref{sec:CREA} and \ref{sec:A-GNE_alg}, and whose updates can be described as a special fixed-point iteration of an averaged operator. An advantage of this scheme is that the generated even subsequence is contractive (Fej\'er monotone) towards a v-GNE. Furthermore, the inertial extrapolation step sizes, $\theta_k$, can freely vary in $[0, 1)$, namely, they do not need to be monotonically non-decreasing. These requirements are less restrictive than those in \cite{mainge2008convergence}, \cite{alvarez2001inertial}.
%,
% i.e., $0 \leq \theta_k \leq \theta_{k+1} \leq \theta < 1/3$ for all $k \geq 0$,
%thus resulting in more efficient algorithms.

Next, we first introduce the idea of alternated inertia in operator-theoretic terms, and then apply it to two v-GNE seeking algorithms, the I-pFB (Algorithm 1B) and the I-cPPP (Algorithm $6$).
Let $R$ be an averaged mapping. The alternating inertial Banach--Picard iteration then reads as follows: 
%\begin{align} \label{eq:AltInertia}
%\bs \omega^{k+1} = 
%\begin{cases}
%R\left( (1+\theta)\bs \omega^k  -\theta \bs \omega^{k-1} \right), &
%\text{ if $k$ odd,}\\
%%
%R(\bs \omega^k), & \text{ if $k$ even.}
%\end{cases}
%\end{align}
\begin{subequations}
\label{eq:AltInertia}
\begin{align} 
\tilde{\bs \omega}^k &:= 
\begin{cases}
(1+\theta)\bs \omega^k  -\theta \bs \omega^{k-1} , &
\text{ if $k$ odd,}\\
\bs \omega^k,  &\text{ if $k$ even,}
\end{cases}\\[.2em]
\label{eq:AltInertiab}
 \bs \omega^{k+1} &= R(\tilde{\bs \omega}^k).
\end{align}
\end{subequations}
where $\bs\omega^{-1} = \bs \omega^0$ is the initialization.

\smallskip
\begin{lemma} \label{prop:aIs}
Let $R$ be $\eta-$averaged, with $\fix(R) \neq \varnothing$. Then, the even subsequence $(\bs \omega^{2k+2})_{k \in \bb N}$ generated by \eqref{eq:AltInertia}, with $\theta \in \left( 0,\tfrac{1-\eta}{\eta} \right)$, converges to some $ \overline{\bs \omega} \in \fix(R)$.
{\hfill $\square$}
\end{lemma}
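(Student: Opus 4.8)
The idea is to show that the even subsequence $(\bs\omega^{2k})_{k\in\bb N}$ behaves like the iterates of a \emph{single} averaged operator applied to $\bs\omega^{2k}$, so that standard Krasnosel'ski\u\i--Mann theory (\cite[Prop.~5.16]{bauschke2017convex}) applies. Concretely, I would unroll two consecutive steps of \eqref{eq:AltInertia}: for $k$ even, $\bs\omega^{k+1}=R(\bs\omega^k)$; then for the odd index $k+1$, $\tilde{\bs\omega}^{k+1}=(1+\theta)\bs\omega^{k+1}-\theta\bs\omega^{k}=(1+\theta)R(\bs\omega^k)-\theta\bs\omega^k$ and $\bs\omega^{k+2}=R\bigl((1+\theta)R(\bs\omega^k)-\theta\bs\omega^k\bigr)$. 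Hence $\bs\omega^{2k+2}=Q(\bs\omega^{2k})$ where
\begin{equation*}
Q:=R\circ\bigl((1+\theta)R-\theta\,\Id\bigr).
\end{equation*}
The whole proof reduces to: (a) $\fix(Q)=\fix(R)$, and (b) $Q$ is averaged whenever $R$ is $\eta$-averaged and $\theta\in(0,\tfrac{1-\eta}{\eta})$. Given (a)--(b), Lemma~\ref{prop:aIs} follows from \cite[Prop.~5.16]{bauschke2017convex}.

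\textbf{Step (a): fixed points.} If $R\bs\omega=\bs\omega$ then $(1+\theta)R\bs\omega-\theta\bs\omega=\bs\omega$, so $Q\bs\omega=R\bs\omega=\bs\omega$. Conversely, if $Q\bs\omega=\bs\omega$, write $\bs v:=(1+\theta)R\bs\omega-\theta\bs\omega$, so $R\bs v=\bs\omega$. Using nonexpansiveness of $R$ and the identity $\bs v-\bs\omega=(1+\theta)(R\bs\omega-\bs\omega)$, I would estimate $\norm{R\bs v-R\bs\omega}\le\norm{\bs v-\bs\omega}=(1+\theta)\norm{R\bs\omega-\bs\omega}$; combined with $R\bs v=\bs\omega$ this gives $\norm{\bs\omega-R\bs\omega}\le(1+\theta)\norm{R\bs\omega-\bs\omega}$, which is not yet a contradiction. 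A cleaner route: express $Q$ in terms of the nonexpansive operator $S$ with $R=(1-\eta)\Id+\eta S$, or use that an averaged operator $R$ satisfies the standard inequality $\norm{R\bs x-R\bs y}^2\le\norm{\bs x-\bs y}^2-\tfrac{1-\eta}{\eta}\norm{(\Id-R)\bs x-(\Id-R)\bs y}^2$; applying this with $\bs x=\bs v$, $\bs y=\bs\omega$ and $\bs\omega\in\fix(Q)$ forces $(\Id-R)\bs v=(\Id-R)\bs\omega$, and unwinding the definition of $\bs v$ then yields $R\bs\omega=\bs\omega$. This same quantitative inequality is exactly what drives Step (b).

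\textbf{Step (b): averagedness of $Q$, the main obstacle.} This is the crux. The operator $(1+\theta)R-\theta\Id = \Id - (1+\theta)(\Id-R)$ is an over-relaxation of $R$; when $R$ is $\eta$-averaged, $\Id-R$ is $\tfrac{1}{2\eta}$-cocoercive (\cite[Prop.~4.39]{bauschke2017convex} or its equivalent here), and the over-relaxed operator $\Id-\tau(\Id-R)$ is averaged precisely for $\tau\in(0,\tfrac1\eta)$, i.e. $\theta\in(0,\tfrac{1-\eta}{\eta})$ since $\tau=1+\theta$. Thus $(1+\theta)R-\theta\Id$ is $\eta'$-averaged for a suitable $\eta'=\eta'(\eta,\theta)<1$, and then $Q=R\circ\bigl((1+\theta)R-\theta\Id\bigr)$ is a composition of two averaged operators, hence averaged by \cite[Prop.~4.44]{bauschke2017convex}. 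I would present this via the descent inequality: set $\bs y^k:=(1+\theta)\bs\omega^k-\theta\bs\omega^{k-1}$ on odd $k$, use the $\eta$-averaged inequality for $R$ twice (once on the even step, once on the odd step) and a telescoping/completing-the-square argument to obtain, for any $\bs\omega^*\in\fix(R)$,
\begin{equation*}
\norm{\bs\omega^{2k+2}-\bs\omega^*}^2 \le \norm{\bs\omega^{2k}-\bs\omega^*}^2 - c\bigl(\norm{\bs\omega^{2k+1}-\bs\omega^{2k}}^2+\norm{\bs\omega^{2k+2}-\bs\omega^{2k+1}}^2\bigr)
\end{equation*}
with $c>0$ depending on $\eta$ and $\theta$ — this is where the bound $\theta<\tfrac{1-\eta}{\eta}$ is used to keep $c$ positive. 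Fej\'er monotonicity of $(\bs\omega^{2k})$ towards $\fix(R)$ plus summability of the increments gives $\bs\omega^{2k+1}-\bs\omega^{2k}\to\bs 0$, hence every cluster point of the bounded sequence $(\bs\omega^{2k})$ is a fixed point of $R$ (using demiclosedness of $\Id-R$), and \cite[Thm.~5.5]{bauschke2017convex} upgrades this to convergence of the full even subsequence to a single $\overline{\bs\omega}\in\fix(R)$. The delicate point throughout is getting the constant $c$ right: the odd-step extrapolation \emph{expands} distances by a factor $(1+\theta)$, and one must verify that the contraction gained from the two $R$-applications (governed by $\tfrac{1-\eta}{\eta}$) dominates this expansion — which is exactly the content of the range restriction on $\theta$.
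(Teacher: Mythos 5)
Your proposal is correct and follows essentially the same route as the paper: unroll two steps to obtain the composed operator $R\circ\bigl((1+\theta)R-\theta\,\Id\bigr)$, show the over-relaxation $(1+\theta)R-\theta\,\Id$ is $\eta(1+\theta)$-averaged precisely because $\theta<\tfrac{1-\eta}{\eta}$, compose averaged operators, match the fixed-point sets, and invoke Krasnosel'ski\u{\i}--Mann convergence. The only cosmetic difference is that the paper settles $\fix\bigl(R\circ((1+\theta)R-\theta\,\Id)\bigr)=\fix(R)$ cleanly via \cite[Cor.~4.51]{bauschke2017convex} instead of your quantitative averagedness inequality, which works but requires the extra computation you left implicit.
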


\smallskip
\begin{proof}
The odd and even subsequences in \eqref{eq:AltInertiab} read as
\begin{align} \label{eq:AltInertia_In}
\forall k \in \bb N:
\begin{cases}
\bs \omega^{2k+1} = R(\bs \omega^{2k}),\\
\bs \omega^{2k+2} =
R((1+\theta) \bs \omega^{2k+1} - \theta \bs \omega^{2k} )\\
\hspace*{3em}=
 R \circ \left(
(1+\theta)R - \theta\Id
\right) (\bs \omega^{2k}).
\end{cases}
\end{align}
Let us define the mapping $R_\theta := R \circ \left((1+\theta)R - \theta \,\Id \right)
$. The next lemma shows that, for $\theta$ small enough,  $R_\theta $ is averaged and has the same fixed points of $R$.

\smallskip
\begin{lemma} \label{lem:AI}
Let $R$ be $\eta-$averaged, with $\eta \in (0,1)$, and set $\theta \in (0,(1-\eta)/\eta)$.
The following statements hold:
\begin{enumerate}[(i)]
\item $(1+\theta)R - \theta \,\Id$ is $\mu-$averaged, with $\mu=\eta (1+\theta)$,
\item $\fix((1+\theta)R - \theta \,\Id) = \fix(R)$,
\item $R_\theta$ is $\nu-$averaged, with $\nu = \frac{\eta+\mu -2\eta \mu}{1-\eta \mu} \in (0,1)$,
\item $\fix(R_{\theta}) = \fix(R)$. 
\end{enumerate}
\end{lemma}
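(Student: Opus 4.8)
The plan is to prove the four claims of Lemma~\ref{lem:AI} in the stated order, since each builds on the previous one, and to rely throughout on the elementary calculus of averaged operators (\cite[Prop.~4.35, Prop.~4.44]{bauschke2017convex}).

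\textbf{Step (i).} Recall that $R$ being $\eta$-averaged means $R = (1-\eta)\Id + \eta\,Q$ for some nonexpansive $Q$. Then a direct computation gives
\[
(1+\theta)R - \theta\,\Id = \big((1+\theta)(1-\eta) - \theta\big)\Id + (1+\theta)\eta\, Q = (1-\mu)\Id + \mu\,Q,
\]
with $\mu = (1+\theta)\eta$. The constraint $\theta \in (0,(1-\eta)/\eta)$ is exactly what guarantees $\mu \in (\eta,1) \subset (0,1)$, so $(1+\theta)R - \theta\,\Id$ is a genuine $\mu$-averaged operator (same underlying nonexpansive $Q$).

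\textbf{Steps (ii) and (iv).} These are immediate and essentially the same argument: from the expressions $(1+\theta)R - \theta\,\Id = (1-\mu)\Id + \mu Q$ and $R = (1-\eta)\Id + \eta Q$, the fixed point conditions $x = (1+\theta)R(x) - \theta x$, $x = R(x)$, and $x = Q(x)$ are all mutually equivalent, which yields (ii). For (iv), note $\fix(R_\theta) = \fix\big(R\circ((1+\theta)R - \theta\Id)\big)$; using (ii), if $x \in \fix(R)$ then $x = ((1+\theta)R - \theta\Id)(x)$ and hence $R_\theta(x) = R(x) = x$, so $\fix(R) \subseteq \fix(R_\theta)$. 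Conversely, since both $R$ and $(1+\theta)R - \theta\Id$ are averaged (hence nonexpansive) with the \emph{common} fixed point set $\fix(R) \neq \varnothing$ (nonempty by hypothesis), the composition has fixed point set equal to the intersection of the two factors' fixed point sets — by the standard result on fixed points of compositions of averaged (quasinonexpansive) operators with a common fixed point \cite[Prop.~4.49]{bauschke2017convex} — which is again $\fix(R)$. This gives (iv).

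\textbf{Step (iii).} Apply the composition rule for averaged operators: if $T_1$ is $\alpha_1$-averaged and $T_2$ is $\alpha_2$-averaged, then $T_1\circ T_2$ is $\nu$-averaged with $\nu = \frac{\alpha_1 + \alpha_2 - 2\alpha_1\alpha_2}{1-\alpha_1\alpha_2}$ \cite[Prop.~4.44]{bauschke2017convex}. Taking $T_1 = R$ (so $\alpha_1 = \eta$) and $T_2 = (1+\theta)R - \theta\Id$ (so $\alpha_2 = \mu$ by Step (i)) gives exactly $\nu = \frac{\eta + \mu - 2\eta\mu}{1-\eta\mu}$; one then checks $\nu \in (0,1)$, which holds whenever $\eta,\mu \in (0,1)$ (the map $(\alpha_1,\alpha_2)\mapsto\nu$ is easily seen to land in $(0,1)$). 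This completes the proof.

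The only mildly delicate point — the ``main obstacle'' — is ensuring the averagedness constants stay strictly below $1$ at each stage: $\mu < 1$ requires precisely the upper bound $\theta < (1-\eta)/\eta$, and then $\nu < 1$ follows automatically. Everything else is a routine application of the averaged-operator calculus from \cite{bauschke2017convex}, and the nonemptiness of $\fix(R)$ is needed only to invoke the composition-of-fixed-points result cleanly in (iv).
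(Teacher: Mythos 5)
Your proof is correct and follows essentially the same route as the paper's: (i) via the relaxation calculus for averaged operators, (ii) by direct manipulation of the fixed-point equation, (iii) via the composition rule for averaged operators, and (iv) via the result that the fixed-point set of a composition of averaged operators with a common fixed point equals the intersection of their fixed-point sets. The only cosmetic differences are that you unfold the definition of $\eta$-averagedness explicitly in (i) where the paper simply cites \cite[Prop.~4.40]{bauschke2017convex}, and that you establish the inclusion $\fix(R) \subseteq \fix(R_\theta)$ by hand before invoking the composition result for the converse.
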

\begin{proof}
(i) It directly follows from \cite[Prop. 4.40]{bauschke2017convex}.
(ii) $\omega \in \fix((1+\theta)R - \theta \,\Id) \Leftrightarrow (1+\theta)R(\omega) - \theta (\omega) = \omega \Leftrightarrow (1+\theta)R(\omega) = (1+\theta) \omega \Leftrightarrow \omega \in \fix(R)$. (iii) It follows by \cite[Prop. 4.44]{bauschke2017convex}, since $R_\theta$ is the composition of $R$ and $(1+\theta)R - \theta \,\Id$, that are $\eta-$ and $\mu-$ averaged, respectively. (iv) It follows by \cite[Cor.\ 4.51]{bauschke2017convex} that $\fix(R_{\theta}) =  \fix( R\circ ((1+\theta)R - \theta \,\Id )) = \fix(R) \cap \fix((1+\theta)R - \theta \,\Id )) = \fix(R)$.
\end{proof}

In view of \eqref{eq:AltInertia_In} and Lemma \ref{lem:AI}, the even subsequence in \eqref{eq:AltInertiab} can be recast as
\begin{align}
\bs \omega^{2k+2} = R_{\theta} (\bs \omega^{2k}), \quad \forall k \in \bb N,
\end{align}
where $R_{\theta}$ is $\nu-$averaged, with $\nu \in (0,1)$ given by Lemma \ref{lem:AI} (iii). Thus, the convergence of the sequence $(\bs \omega^{k+2})_{k \in \bb N}$ to some $\bar{\bs \omega} \in \fix(R_{\theta}) = \fix(R)$ follows by \cite[Prop. 5.16]{bauschke2017convex}.
\end{proof}

\smallskip
Finally, we propose some explicit rules to choose the alternating inertial extrapolation step sizes for the pFB (Algorithm 1B) and for the cPPP (Algorithm $6$). In fact, in Section \ref{sec:NS}, we observe via numerical simulations that in some cases these alternating-inertial variants outperform the standard-inertial algorithms in terms of convergence speed. Let us then conclude the section with the associated convergence results.

\smallskip
\begin{corollary}
(\textit{Convergence of alternating inertial pFB (aI-pFB)})
Let Assumptions \blue{\ref{ass:CFn},} \ref{ass:CCF}, \ref{ass:vGNEex} and \ref{ass:coco} hold true. Then, the sequence $( \col(\bs x^k, \lambda^k))_{k \in \bb N}$ generated by Algorithm 1B with extrapolation steps set as 
\begin{align} \label{eq:aIssFB}
\theta^k = \begin{cases}
\theta \in \left[0,\frac{2\delta\gamma-1}{2\delta \gamma} \right), & \text{ if $k$, odd}\\
0  & \text{ if $k$ even}\\
\end{cases}
\end{align}
globally converges to some $\col(\bs x^*,\lambda^*) \in \zer(T)\neq \varnothing$, where $\bs x^*$ is a v-GNE.
{\hfill $\square$}
\end{corollary}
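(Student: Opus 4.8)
The strategy is to recognize the alternating inertial pFB (aI-pFB) as an instance of the abstract alternating inertial iteration \eqref{eq:AltInertia} applied to the FB operator $R_{\text{FB}}$ defined in \eqref{eq:Rmap}, and then invoke Lemma \ref{prop:aIs} together with the convergence machinery already developed for Algorithm $1$. First I would observe that, with the extrapolation schedule \eqref{eq:aIssFB} (inertia $\theta$ on odd iterations, $0$ on even iterations), the primal-dual iterates generated by Algorithm 1B coincide exactly with the sequence $(\bs\omega^k)_{k\in\bb N}$ produced by \eqref{eq:AltInertia} with $R = R_{\text{FB}}$. This is a routine matching: one writes the compact form \eqref{eq:Inertial-R} of Algorithm 1B, substitutes the piecewise extrapolation parameter, and checks that the odd step is $\bs\omega^{2k+1} = R_{\text{FB}}(\bs\omega^{2k})$ while the even step is $\bs\omega^{2k+2} = R_{\text{FB}}\big((1+\theta)\bs\omega^{2k+1} - \theta\,\bs\omega^{2k}\big)$, with the initialization $\bs\omega^{-1}=\bs\omega^0$ matching the requirement $x_i^0=\tilde x_i^0$, $\lambda^0=\tilde\lambda^0$ in Algorithm 1B.

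Next I would recall the averagedness of $R_{\text{FB}}$. Under Assumptions \ref{ass:CFn}, \ref{ass:CCF}, \ref{ass:coco}, and with the step sizes $\{\alpha_i\}_{i\in\mc I}$, $\beta$ and the parameter $\delta>\tfrac{1}{2\gamma}$ chosen as in Algorithm $1$, the analysis recalled in Section \ref{subsec:pFB} (items (i)–(iii) there, citing \cite[Lemma 7]{yi2019operator} and \cite[Prop. 26.1]{bauschke2017convex}) gives that $R_{\text{FB}}$ is $\eta$-averaged with respect to the $\Phi$-induced norm $\|\cdot\|_\Phi$, where $\eta = \tfrac{2\delta\gamma}{4\delta\gamma-1}\in(0,1)$; moreover $\fix(R_{\text{FB}}) = \zer(T) \neq\varnothing$ by Assumption \ref{ass:vGNEex} and Proposition \ref{pr:UvGNE}. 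I would then compute $\tfrac{1-\eta}{\eta} = \tfrac{(4\delta\gamma-1) - 2\delta\gamma}{2\delta\gamma} = \tfrac{2\delta\gamma-1}{2\delta\gamma}$, so that the admissible inertia range $\theta\in\big[0,\tfrac{2\delta\gamma-1}{2\delta\gamma}\big)$ in \eqref{eq:aIssFB} is precisely the interval $\big[0,\tfrac{1-\eta}{\eta}\big)$ required by Lemma \ref{prop:aIs}.

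Applying Lemma \ref{prop:aIs} with $R = R_{\text{FB}}$ (and the norm $\|\cdot\|_\Phi$, relative to which everything is averaged), I conclude that the even subsequence $(\bs\omega^{2k+2})_{k\in\bb N}$ converges to some $\overline{\bs\omega}\in\fix(R_{\text{FB}}) = \zer(T)$. It remains to upgrade convergence of the even subsequence to convergence of the whole sequence: since $\bs\omega^{2k+1} = R_{\text{FB}}(\bs\omega^{2k})$ and $R_{\text{FB}}$ is (nonexpansive, being averaged, hence) continuous, the odd subsequence $(\bs\omega^{2k+1})_{k\in\bb N}$ converges to $R_{\text{FB}}(\overline{\bs\omega}) = \overline{\bs\omega}$, the last equality because $\overline{\bs\omega}$ is a fixed point. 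Hence $(\bs\omega^k)_{k\in\bb N}$ converges to $\overline{\bs\omega} =: \col(\bs x^*,\lambda^*)\in\zer(T)$, and $\bs x^*$ is a v-GNE by Proposition \ref{pr:UvGNE}. The only subtlety — and the point I would be most careful about — is the bookkeeping that the alternating-inertia iteration \eqref{eq:AltInertia} is the one actually realized by Algorithm 1B with schedule \eqref{eq:aIssFB}, i.e. verifying that the auxiliary variables $d_i^{k+1}$ and the internal structure of $R_{\text{FB}}$ are consistent with the extrapolated argument $\tilde{\bs\omega}^k$ being fed in on odd steps only; this is not deep but is where an off-by-one error could creep in, so it deserves an explicit line in the write-up.
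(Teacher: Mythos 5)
Your proposal is correct and follows essentially the same route as the paper: identify Algorithm 1B with schedule \eqref{eq:aIssFB} as the alternating inertial iteration \eqref{eq:AltInertia} applied to the $\eta$-averaged operator $R_{\text{FB}}$ with $\eta = \tfrac{2\delta\gamma}{4\delta\gamma-1}$, compute $\tfrac{1-\eta}{\eta}=\tfrac{2\delta\gamma-1}{2\delta\gamma}$, and invoke Lemma \ref{prop:aIs}. Your added step upgrading convergence of the even subsequence to the whole sequence via continuity of $R_{\text{FB}}$ is a welcome detail that the paper's one-line proof leaves implicit.
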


\begin{proof}
The pFB algorithm (Algorithm $1$) reads as the fixed-point iteration in \eqref{eq:R}, where the mapping $R_{\text{FB}}$ is $\eta := (\frac{2\delta \gamma}{4\delta \gamma-1})-$averaged w.r.t. the $\Phi-$induced norm. Therefore, the iteration with alternated inertia and extrapolation step sizes 
$0 \leq \theta < \frac{1-\eta}{\eta} = \frac{2\delta \gamma - 1}{2\delta \gamma}$ converges by Lemma \ref{prop:aIs}.
\end{proof}

\smallskip
\begin{corollary}
(\textit{Convergence of alternating inertial cPPP (aI-cPPP)})
Consider the game in \eqref{eq:Game} with cost functions as in  \eqref{eq:CF-SS}. Let Assumptions \blue{\ref{ass:CFn},} \ref{ass:CCF}, \ref{ass:vGNEex} and \ref{ass:Pmonot} hold true. Then, the sequence $( \col(\bs x^k, \lambda^k))_{k \in \bb N}$ generated by Alg. $6$ with extrapolation steps
\begin{align} \label{eq:aIssPPP}
\theta_k = \begin{cases}
\theta \in [0,1), & \text{ if $k$, odd}\\
0,  & \text{ if $k$ even}\\
\end{cases}
\end{align}
globally converges to some $\col(\bs x^*,\lambda^*) \in \zer(T)\neq \varnothing$, where $\bs x^*$ is a v-GNE.
{\hfill $\square$}
\end{corollary}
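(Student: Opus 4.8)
The plan is to reuse the machinery already established for Algorithm~6 (I-cPPP) together with the alternating-inertia result, Lemma~\ref{prop:aIs}. First I would recall from the discussion following Theorem~\ref{th:CPPP} that Algorithm~$6$ with the inertial steps removed is exactly the Banach--Picard iteration $\bs\omega^{k+1} = \mathrm{J}_{\Phi_{\text{C}}^{-1}T}(\bs\omega^k)$, and that under Assumptions~\ref{ass:CFn}, \ref{ass:CCF}, \ref{ass:vGNEex}, \ref{ass:Pmonot} with the step-size conditions in the initialization of Algorithm~$6$, the preconditioning matrix $\Phi_{\text{C}}$ in \eqref{eq:Phi_C} is positive definite, so the generalized resolvent $R := \mathrm{J}_{\Phi_{\text{C}}^{-1}T}$ is firmly nonexpansive, hence $\tfrac12$-averaged, with respect to $\|\cdot\|_{\Phi_{\text{C}}}$, and $\fix(R) = \zer(T) \neq \varnothing$.

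Next I would observe that feeding this $R$ into the alternating inertial Banach--Picard iteration \eqref{eq:AltInertia} produces precisely the updates of Algorithm~$6$ with the extrapolation schedule \eqref{eq:aIssPPP}: on odd iterations the inertial correction $\tilde{\bs\omega}^k = (1+\theta)\bs\omega^k - \theta\bs\omega^{k-1}$ is applied before the resolvent, while on even iterations $\tilde{\bs\omega}^k = \bs\omega^k$, exactly matching \eqref{eq:inertialXcPPP}--\eqref{eq:inertialLcPPP} with $\theta^k$ alternating between $\theta$ and $0$. I would state this correspondence explicitly, mirroring the proof of the previous corollary for aI-pFB.

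Then I would invoke Lemma~\ref{prop:aIs} with $R$ as above, which is $\eta$-averaged for $\eta = \tfrac12$. The admissible range for the extrapolation parameter is $\theta \in \left(0, \tfrac{1-\eta}{\eta}\right) = (0,1)$, which is exactly the interval in \eqref{eq:aIssPPP} (the endpoint $\theta = 0$ giving the plain cPPP, already covered by Theorem~\ref{th:CPPP}). Lemma~\ref{prop:aIs} then yields that the even subsequence $(\bs\omega^{2k+2})_{k\in\bb N}$ converges to some $\bar{\bs\omega} \in \fix(R) = \zer(T)$; the full sequence converges because the odd iterates are obtained from the even ones by one application of the continuous map $R \circ ((1+\theta)R - \theta\,\Id)$ and a telescoping/asymptotic-regularity argument (the successive differences vanish since the whole orbit is Fej\'er monotone in $\|\cdot\|_{\Phi_{\text{C}}}$). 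Writing $\bar{\bs\omega} = \col(\bs x^*, \lambda^*)$, Proposition~\ref{pr:UvGNE} then identifies $\bs x^*$ as a v-GNE of the game in \eqref{eq:Game}.

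I do not expect a serious obstacle: the result is essentially a corollary of Lemma~\ref{prop:aIs} combined with the already-established averagedness of $\mathrm{J}_{\Phi_{\text{C}}^{-1}T}$. The only mild subtlety is verifying that the averagedness constant is genuinely $\tfrac12$ (firm nonexpansiveness of the resolvent of a maximally monotone operator in the $\Phi_{\text{C}}$-metric, via \cite[Prop.~23.8]{bauschke2017convex}), so that the parameter range in \eqref{eq:aIssPPP} is correct; and, if one wants convergence of the entire sequence rather than just the even subsequence, one must note that the odd iterates share the same limit, which follows from the nonexpansiveness estimates already contained in the proof of Lemma~\ref{prop:aIs}.
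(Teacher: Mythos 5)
Your proposal is correct and follows essentially the same route as the paper: cast Algorithm~6 as the fixed-point iteration of the firmly nonexpansive ($\tfrac12$-averaged) resolvent $\mathrm{J}_{\Phi_{\text{C}}^{-1}T}$ in the $\Phi_{\text{C}}$-metric, then apply Lemma~\ref{prop:aIs} with $\eta=\tfrac12$ to get the admissible range $\theta\in[0,1)$. Your added remark on upgrading convergence of the even subsequence to convergence of the full sequence is a useful refinement that the paper's one-line proof leaves implicit.
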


\begin{proof}
cPPP reads as the fixed-point iteration in \eqref{eq:CPPP} with $\theta^k=0$ for all $k >0$, where the resolvent mapping $J_{\Phi_{\text{C}}^{-1} T}$ is firmly-nonexpansive, i.e.,  $\eta := \tfrac{1}{2}-$averaged, w.r.t. the $\Phi_{\text{C}}-$induced norm. Therefore, the iteration with alternating inertia and extrapolation step sizes $0 \leq \theta < \frac{1-\eta}{\eta} = 1$ converges by Lemma \ref{prop:aIs}.
\end{proof}

\smallskip
\begin{remark}[Convergence rate]
We recall that pFB, cPPP and their alternating-inertial variants can be compactly cast as fixed-point iterations of some averaged operators. It follows by \cite[Th.~1]{davis2016convergence}
that their sequences of fixed-point residuals, i.e., $\| \bs \omega^{k+1} - \bs \omega^{k} \|^2 $, converge with rate $o(1/(k+1))$.
{\hfill $\square$}
\end{remark}

%===========================================================================================
\section{Illustrative application: Charging control of plug-in electric vehicles}
\label{sec:CCPEV}

To study the performance of the proposed algorithms, we formulate a charging coordination problem for a large population of nooncooperative plug-in electric vehicles (PEV) as a generalized aggregative game, as in \cite[\S 6]{paccagnan2018nash}. In subsection \ref{sec:GF}, we introduce the model for the PEV agents, formalize the charging control game and verify that the necessary technical assumptions are satisfied. In subsection \ref{sec:NS} we compare the performance of our algorithm against some standard methods.

\subsection{Game formulation}
\label{sec:GF}
We adopt the same model in \cite[\S 6]{paccagnan2018nash}. Consider the charging coordination problem for a large population of $N \gg 1$ noncooperative PEV over a time horizon made of multiple charging intervals $\{1,2,\ldots,n \}$. The state of vehicle $i$ at time $t$ is denoted by the variable $s_i(t)$. The time evolution of $s_i(t)$ is described by the discrete-time system 
$$s_i(t+1) = s_i(t) + b_i x_i(t), \quad t = 1, \ldots, n,$$ where $x_i(t)$ denotes the charging control input and $b_i$ the charging efficiency. 

\subsubsection*{Constraints} At each time instant $t$, the charging input $x_i(t)$ must be nonnegative and cannot exceed an upper bound $\bar{x}_i(t) \geq 0$. Moreover, the final state of charge must satisfy $s_i(n+1) \geq \eta_i$, where $\eta_i \geq 0$ is the desired state of charge of vehicle $i$. We assume that each PEV agent $i$ decides on its charging strategy $x_i = \col(x_i(1), \ldots,x_i(n)) \in \Omega_i \subset \R^n$, where the set $\Omega_i$ can be expressed as
\begin{multline} \label{eq:PevLocCon}
\Omega_i :=
\left\{
x_i \in \R^n \left| \quad
0 \leq x_i(t) \leq \bar{x}_i(t), \; \forall t = 1,\ldots,n; \right. \right.
\\
\textstyle
\left.
\text{ and }
\sum_{t=1}^n x_i(t) \geq l_i 
\right\},
\end{multline}
where $l_i  = b_i^{-1} (\eta_i - s_i(1))$ and $s_i(1)$ is the state of charge
at the beginning of the time horizon.

Furthermore, for each time instant $t$, the overall power that the grid can deliver to the PEV is denoted by $N K(t)$, thus introducing the following coupling constraints:
\begin{align}\textstyle \label{eq:PevCC}
\frac{1}{N} \sum_{i=1}^N x_i(t) \leq K(t), \quad \text{for all } t =1,\ldots,n,
\end{align}
which in compact form can be cast as $(\1_N^{\top} \otimes I_n) \bs x \leq N K$, with $K = [K(1), \ldots K(n)]^\top$.

\smallskip
\subsubsection*{Cost functions}
The cost function of each PEV represents its electricity bill over the horizon of length $n$ plus a local penalty term $g_i$ (e.g., the battery degradation cost \cite{ma2016efficient}, \cite{ma2015distributed}), i.e, 
\begin{align} \nonumber
J_i(x_i, \bs x_{-i}) &= \sum_{t=1}^n g_{i,t}( x_i(t)) + p_t \left(\frac{
d(t) + \avg(\bs x(t))}{\kappa(t)}
\right)x_i(t) \\
\label{eq:PevCF}
&=: g_i( x_i) + p(\avg(\bs x))^\top x_i,
\end{align}
where $g_i$ is convex and the energy price for each time interval $p_t:\R_{\geq 0} \rightarrow \R_{>0}$ is monotonically increasing, continuously differentiable and depends on the ratio between the total consumption and the total capacity, i.e., $(d(t) + \avg(\bs x(t)))/\kappa(t)$, where $d(t)$ and $\avg(\bs x(t)):= \frac{1}{N}\sum_{i=1}^N x_i(t)$ represent the non-PEV and PEV demand at time $t$ divided by $N$ and $\kappa(t)$ is the total production capacity divided by $N$ as in \cite[eq. (6)]{ma:callaway:hiskens:13}.

\smallskip
\subsubsection*{Aggregative game} Overall, each PEV $i$, given the charging inputs of the other PEV, aims at solving the following optimization problem:
\begin{align}\label{eq:PEVgame}
(\forall i \in \mc I): \; \left\{
\begin{array}{c l}
\underset{x_i \in \, \R^n}{\argmin}&
g_i(x_i)+ p(\avg(\bs x))^\top x_i
\\
\text{ s.t. }   &
 x_i \in \Omega_i,\\[.2em]
&
(\1_N^{\top} \otimes I_n) \bs x \leq N K,
%\avg(\bs x) \leq K
\end{array} 
\right. 
\end{align}

\smallskip
Next, we show that the proposed charging control game in \eqref{eq:PEVgame} does satisfy our technical setup. The local cost functions $J_i$'s in \eqref{eq:PevCF} are convex w.r.t. the local variable $x_i$, the local constraint sets $\Omega_i$'s in \eqref{eq:PevLocCon} are non-empty (for an appropriate choice of the parameters), convex and compact, the coupling constraints in  \eqref{eq:PevCC} are affine and their intersection with the local constraints non-empty (for an appropriate choice of the parameters), namely, the Slater's condition holds true. Hence, Assumption\blue{s \ref{ass:CFn}} and \ref{ass:CCF} are satisfied. In particular, there exist at least one GNE of the game in \eqref{eq:PEVgame}, see Remark \ref{rem:ExGNE}.

\smallskip
The correspondent PG in \eqref{eq:F} and approximate PG in \eqref{eq:aF} read more explicitly as follows:
\begin{align} \textstyle \label{eq:PevWar}
\tilde{F}(\bs x) &= \col \left(\{ p(\avg(\bs x)) \}_{i \in \mc I}\right),\\
F(\bs x) &= \textstyle
\tilde{F}(\bs x) + \frac{1}{N} \col\left( \nabla_{z}p(z)_{|z = \avg(\bs x) }x_i \}_{i \in \mc I}\right).
\label{eq:PevF}
\end{align}
The following lemma shows the properties of these mappings depending on the choice of the price function $p$ in \eqref{eq:PevCF}.

\smallskip
\begin{lemma}[{\cite[Lemma 3]{paccagnan2018nash}}]
\label{lem:NandW}
The following hold:
\begin{enumerate}[(i)] 
\item For all $i \in \mc I$, let $g_i$ be convex and the price function $p$ be monotone, then $\tilde{F}$ in \eqref{eq:PevWar} is maximally monotone;

\item For all $i \in \mc I$, let $g_i$ be convex and the price function $p$ be affine, i.e., $p(\avg(\bs x)) = C \avg(\bs x)  + c $, with $C \in \R^{n\times n}$, and strongly monotone, i.e., $(C+C^\top)/2 \succ 0$, then $F$ in \eqref{eq:PevF} is strongly monotone.
{\hfill $\square$}
\end{enumerate}
\end{lemma}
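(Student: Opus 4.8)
The plan is to reduce the statement to routine linear algebra on Kronecker products. First I would make the two mappings explicit: writing $\avg(\bs x) = \tfrac1N(\1_N^\top\otimes I_n)\bs x$, the mappings in \eqref{eq:PevWar} and \eqref{eq:PevF} read $\tilde F(\bs x) = \1_N\otimes p(\avg(\bs x))$ and, using \eqref{eq:grad_agg}, $F(\bs x) = \tilde F(\bs x) + \tfrac1N\col\big(\{[\nabla_z p(\avg(\bs x))]\,x_i\}_{i\in\mc I}\big)$.

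For part (i), I would check monotonicity of $\tilde F$ by a one-line computation: for all $\bs x,\bs y$,
\begin{equation*}
(\tilde F(\bs x)-\tilde F(\bs y))^\top(\bs x-\bs y) = N\big(p(\avg(\bs x))-p(\avg(\bs y))\big)^\top\big(\avg(\bs x)-\avg(\bs y)\big)\geq 0,
\end{equation*}
the last inequality being monotonicity of $p$. Since $p$ is continuously differentiable, $\tilde F$ is single-valued and continuous over $\bs\Omega$, hence maximally monotone \cite[Cor.~20.28]{bauschke2017convex}. (Equivalently, $\tilde F = N\,\mc{A}^\top\!\circ p\circ\mc{A}$ with $\mc{A}\colon\bs x\mapsto\avg(\bs x)$ linear, and composing a maximally monotone map with a linear map and its adjoint preserves maximal monotonicity.)

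For part (ii), I would substitute the affine price $p(z)=Cz+c$ into the two formulas above. This gives $\tilde F(\bs x)=\tfrac1N(\1_N\1_N^\top\otimes C)\bs x+\1_N\otimes c$, and since $\nabla_z p\equiv C$ the correction term equals $\tfrac1N(I_N\otimes C)\bs x$, so
\begin{equation*}
F(\bs x) = \tfrac1N\big((I_N+\1_N\1_N^\top)\otimes C\big)\bs x + \1_N\otimes c ,
\end{equation*}
an affine map. Strong monotonicity of an affine map is equivalent to positive definiteness of the symmetric part of its linear term, here $M := \tfrac1N(I_N+\1_N\1_N^\top)\otimes\tfrac{C+C^\top}{2}$. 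I would finish by noting that $I_N+\1_N\1_N^\top\succ0$ — its eigenvalues are $N+1$ (simple) and $1$ (with multiplicity $N-1$) — and $\tfrac{C+C^\top}{2}\succ0$ by hypothesis, so, the Kronecker product of positive definite matrices being positive definite, $M\succ0$; hence $F$ is $\tfrac1N\,\text{eig}_{\min}\!\big(\tfrac{C+C^\top}{2}\big)$-strongly monotone.

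There is no real obstacle here: the argument is elementary, and the only step requiring care is the bookkeeping of the term in which $x_i$ also feeds the aggregate, i.e.\ deriving the correction $\tfrac1N(I_N\otimes C)\bs x$ from \eqref{eq:grad_agg}. Notably, that correction is exactly what turns the merely positive-semidefinite pattern $\1_N\1_N^\top\otimes C$ of $\tilde F$ into the positive-definite $M$ of $F$, which explains why $\tilde F$ is in general only monotone while $F$ becomes strongly monotone. Since the statement is quoted verbatim from \cite[Lemma~3]{paccagnan2018nash}, one could alternatively just invoke that reference.
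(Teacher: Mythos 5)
Your proof is correct, but it takes a different route from the paper: the paper disposes of both claims with a one-line citation to \cite[Lemma~3]{paccagnan2018nash}, whereas you give a self-contained elementary argument. Your part (i) computation $(\tilde F(\bs x)-\tilde F(\bs y))^\top(\bs x-\bs y)=N(p(\avg(\bs x))-p(\avg(\bs y)))^\top(\avg(\bs x)-\avg(\bs y))$ is exactly the right reduction, and continuity of the single-valued monotone map then gives maximality. Your part (ii) correctly identifies the linear part of $F$ and reduces strong monotonicity to $\tfrac1N(I_N+\1_N\1_N^\top)\otimes\tfrac{C+C^\top}{2}\succ0$, with the explicit modulus $\tfrac1N\,\text{eig}_{\min}(\tfrac{C+C^\top}{2})$ as a bonus the paper does not provide. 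Two minor remarks: first, depending on the gradient convention in \eqref{eq:grad_agg} the correction term is $\tfrac1N C^\top x_i$ rather than $\tfrac1N C x_i$ (since $\nabla_z(p(z)^\top x_i)=C^\top x_i$ for $p(z)=Cz+c$); this is immaterial because only the symmetric part enters the monotonicity estimate, but it is worth flagging. Second, the hypothesis that the $g_i$ are convex plays no role in either part, since $g_i$ enters $G$ rather than $\tilde F$ or $F$ --- your proof silently (and correctly) never uses it. The value of your approach is transparency and the quantitative constant; the value of the paper's is brevity and traceability to the original source.
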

\begin{proof}
(i) and (ii) follow from \cite[Lemma 3 (i)]{paccagnan2018nash}.
\end{proof}

\smallskip
\subsection{Numerical analysis}
\label{sec:NS}
In our numerical study we consider an heterogeneous population of PEV playing over a time horizon of $n=24$ charging intervals. All the parameters of the game are drawn from uniform distributions and fixed over the course of a simulation. Specifically, for all $i \in \mc I$, we set: the desired final state of charge $l_i$ in \eqref{eq:PevLocCon} according to $l_i \sim (0.5, \, 1.5)$, where $\sim (\tau_1, \tau_2)$ denotes the uniform
distribution over an interval $(\tau_1, \tau_2)$ with $\tau_1 < \tau_2$; for all $t\in \{ 1,\ldots ,n \}$, the upper charging input bound as $\bar x_i(t) \sim (1,\, 5)$, with probability $0.8$, $\bar x_i(t) = 0$ otherwise. For all $t$, the non-PEV demand $d(t)$ is taken as the typical base demand over a summer day in the United States \cite[Figure 1]{ma:callaway:hiskens:13}; $\kappa(t) = 12$ kW, and the upper bound $K(t) = 0.55$ kW is chosen such that the coupling constraints in \eqref{eq:PevCC} are active in the middle of the night.

\smallskip
In the remainder of this section, we study the convergence properties of the proposed algorithms on two different scenarios characterized by a different choice of the price function $p$ and local cost functions $g_i$, in \eqref{eq:PevCF}.

\smallskip
\subsubsection{Monotone price function} Consider the price function 
\begin{align}\textstyle
p_t\left(\avg(\bs x(t))\right) := 0.15 \left(\frac{d(t) +\avg(\bs x(t)))}{\kappa(t)}\right)^{1.5}, \quad \forall t.
\end{align}
as in \cite[\S VII.B]{ma:callaway:hiskens:13} and a local cost function $g_i$ defined as
\begin{align} \textstyle
g_i(x_i) = \pi_i \left( \sum_{t=1}^n x_i(t) \right)^2 + a_i^\top x_i,\quad \forall i \mc I,
\end{align}
where $\pi_i \sim (0.1,0.8)$ and $a_i(t) \sim (0.1,0.4)$, for all $t \in \{1,\ldots,n\}$. Under these choices, it follows from Lemma \ref{lem:NandW} (i) that the approximate PG in \eqref{eq:PevWar} is maximally monotone. Therefore, a v-GAE of the game in \eqref{eq:PEVgame} can be found with the algorithms in Section \ref{sec:AMAG} and the FoRB (Algorithm $5$).
\begin{figure}
\centering
\includegraphics[width=\columnwidth]{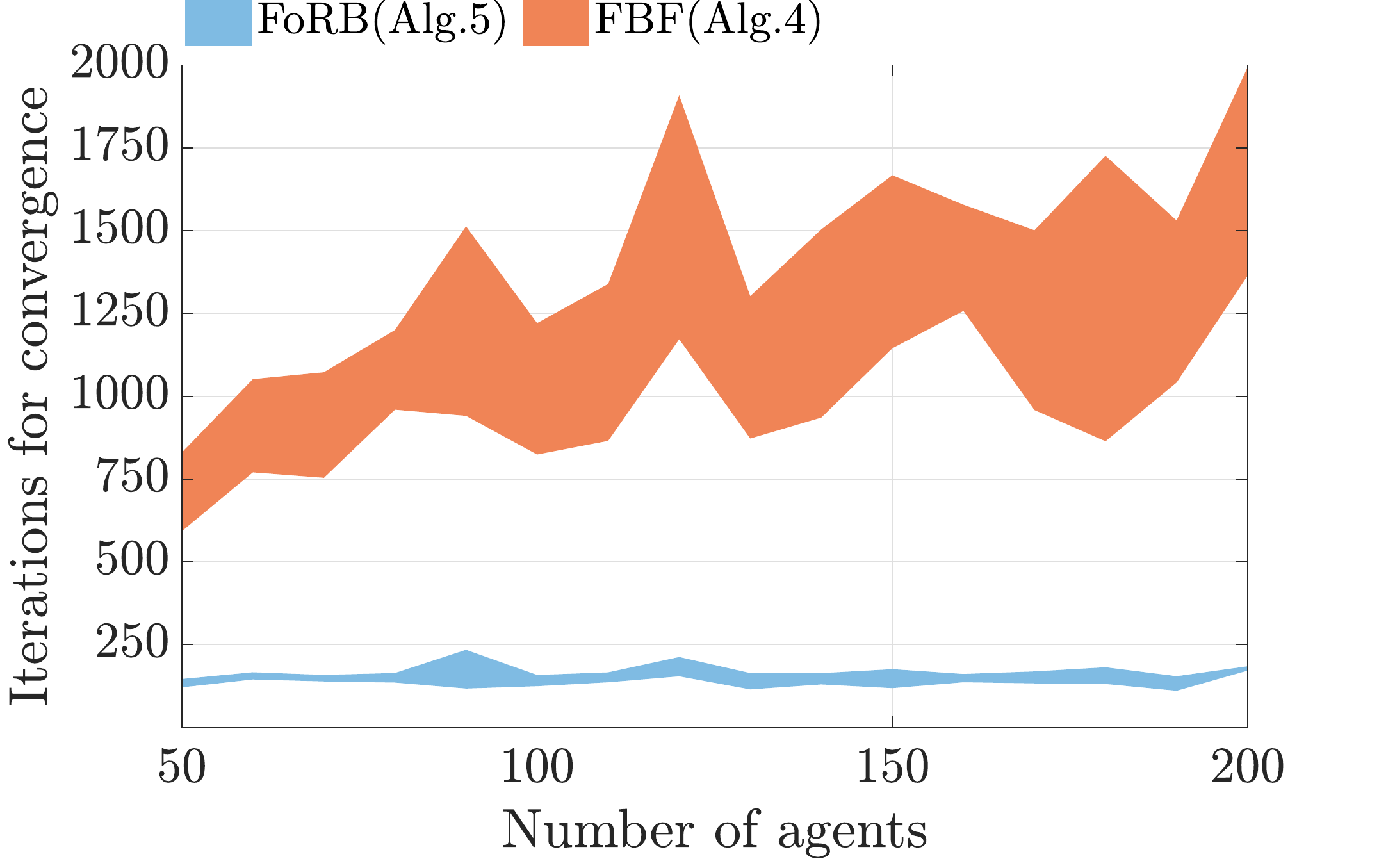}
\caption{
Number of iterations to achieve convergence for FoRB (Alg. 5, $\theta=0$) and FBF (Alg. 4) vs populations size $N$. The areas contain the outcome of 10 random simulations for each $N \in \{ 50, 60, \ldots, 200\}$. Convergence is considered achieved when $\| \bs x^k- \bs x^\star\|/\|\bs x^\star \| \leq 10^{-4}$, where $\bs x^\star$ is a v-GAE.
\label{fig:ALL_MM}
}
\end{figure}

In Fig. \ref{fig:ALL_MM}, we compare the total number of iterations required by FBF (Alg. 4) and FoRB (Alg. 5) to achieve convergence to a v-GAE (i.e., $\|\bs x^k - \bs x^\star\|/ \| \bs x^\star\| \leq 10^{-4}$), over different population sizes $N$ varying from $50$ to $200$ agents. For each $N$, we run 10 simulations with random parameters. On average, FoRB converges at least $5$ times faster than FBF in terms of number of iterations, and, thus, 10 times faster in terms of communication rounds between PEVs and central coordinator. Moreover, unlike FBF, the convergence speed of FoRB seems not affected by increasing the number of agents and randomly varying the parameters of the problem.

\smallskip
\subsubsection{Linear price function}
Consider the price function 
\begin{align}\textstyle
p(\avg(\bs x)) := C \, \avg(\bs x)  + c,
\end{align}
where $C = I_n$, $c = \col(d(1),\ldots,d(n))$, and the local convex cost function $g_i$, for all $i \in \mc I$, as
\begin{align} \textstyle \label{eq:BDC}
g_i(x_i) = \frac{1}{2} x_i^\top Q_i x_i+ p_i^\top x_i.
\end{align}
%where $Q_i = 0.1 I_n$, $C = I_n$, $c_i = d$, for all $i \in \mc I$.
For instance, the local penalty term $g_i$ in \eqref{eq:BDC} can model a convex quadratic battery degradation cost as in \cite[Eq. (5)]{ma2016efficient}, \cite[Eq. (8)]{ma2015distributed}, possibly plus a quadratic penalty $\|x_i - x_i^{\text{ref}} \|^2$ on the deviation from a preferred charging strategy $x_i^{\text{ref}} \in \Omega_i$.

Under these choices, the pseudo-gradient mapping $F$ in \eqref{eq:PevF} is strongly monotone, by Lemma \ref{lem:NandW} (ii), and Lipschitz continuous, since affine. Thus, it follows by Remark \ref{rem:SMONandLIP_coco} that $F$ is cocoercive.
The unique v-GNE of the game in \eqref{eq:PEVgame} can be found with the algorithms in Section \ref{subsec:pFB} and, since the cost functions have the same structure in \eqref{eq:CF-SS}, with the cPPP.

First, we consider an heterogeneous population of PEV's, by setting the parameters of the local penalty terms $g_i$ in \eqref{eq:BDC} as follows: $Q_i = \diag(q(1),\ldots,q(n))$, $p_i = \col(p_i(1),\ldots,p_i(n))$, with $q_i(t) \sim (0.1,4)$ and $p_i(t) \sim (0.2,2)$, for all $t$.
In Fig. \ref{fig:ALL_het}, we compare the average number of iterations required to achieve convergence (i.e., $\|\bs x^k - \bs x^*\|/ \| \bs x^*\| \leq 10^{-6}$) for pFB (Alg. 1), cPPP (Alg. 6) and their inertial variants, for different population sizes $N$. For each $N$, we run 10 simulations with random parameters and considered the average number of iterations for convergence. The step sizes of all the algorithms are set $1\%$ smaller than their theoretical upper bounds. On average, cPPP outperforms pFB. For both pFB and cPPP, their inertial variants show better performances with respect to the vanilla algorithms. Overall, the over-relaxed cPPP is the fastest among all the considered methodologies. We note that, the convergence speed of all the algorithms seems only mildly affected by the population size.

In Fig. \ref{fig:ALL_homo}, we repeat the same analysis for an homogeneous population of PEV's. Specifically, we set the parameters of the local penalty term in \eqref{eq:BDC} as $Q_i = 0.1$ and $p_i = 0.2$, for all $i \in \mc N$. The performances of all the algorithms improve with respect to the case with heterogeneous agents. On average, cPPP requires less then half the iterations/communication rounds of pFB. For both pFB and CPPP, their inertial variants show better performances with respect to the standard algorithms. Overall, the alternated inertial cPPP (aI-cPPP) and the over-relaxed cPPP (or-cPPP) are the fastest among all the considered methodologies (less than 50 communication rounds with the central coordinator to achieve a precision of $10^{-6}$, independently on the total number of PEVs).

%%%
\begin{figure}
\centering
\begin{subfigure}{\linewidth}
\centering\large 
\includegraphics[width=\columnwidth]{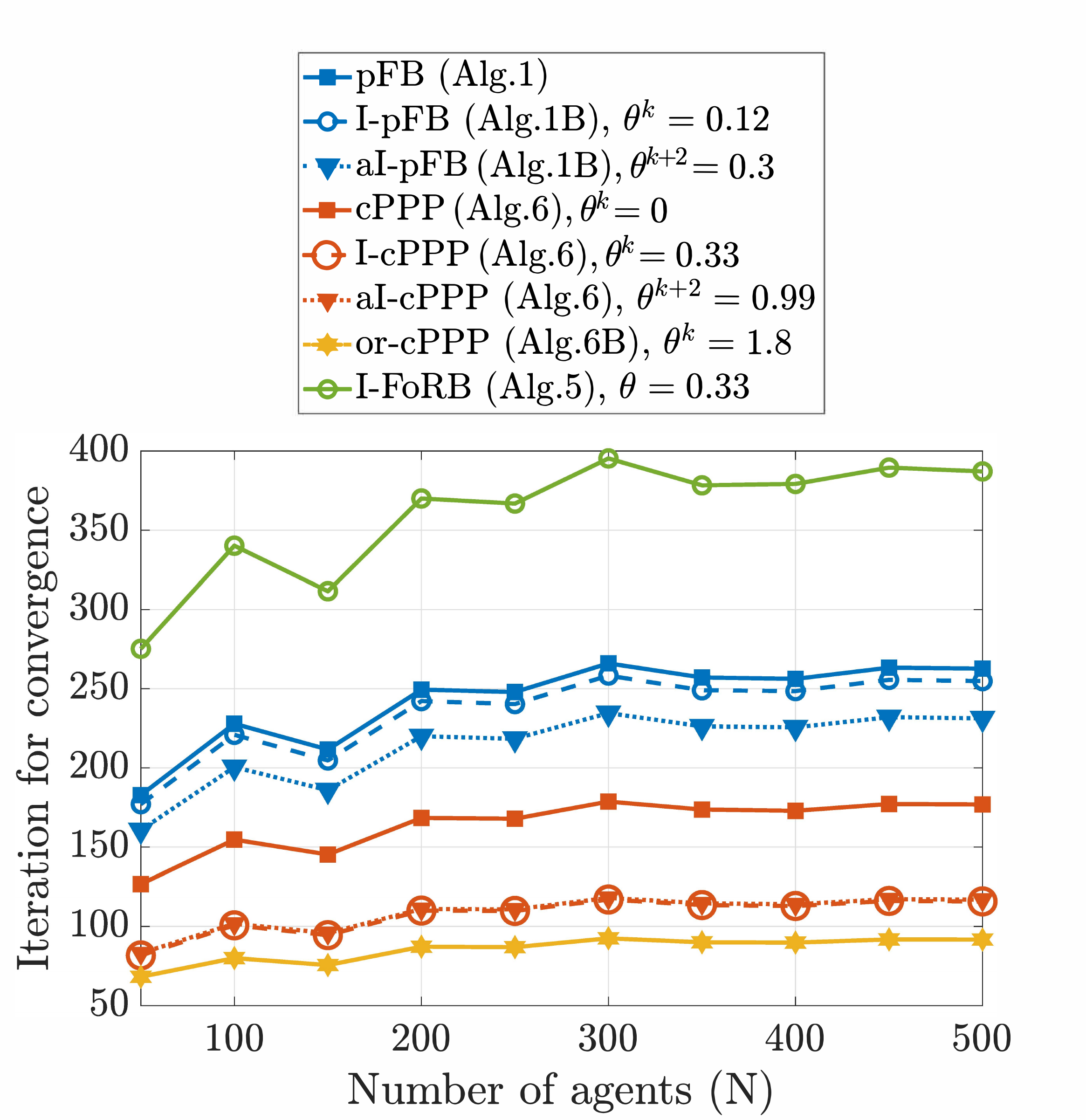}
\caption{
Heterogeneous population of PEVs. The step sizes of pFB and cPPP have been set $1\%$ smaller than their theoretical upper bounds.
\label{fig:ALL_het}
}
\end{subfigure}%
\vspace*{1em}

\begin{subfigure}{\linewidth}
\centering\large
\includegraphics[width=\columnwidth]{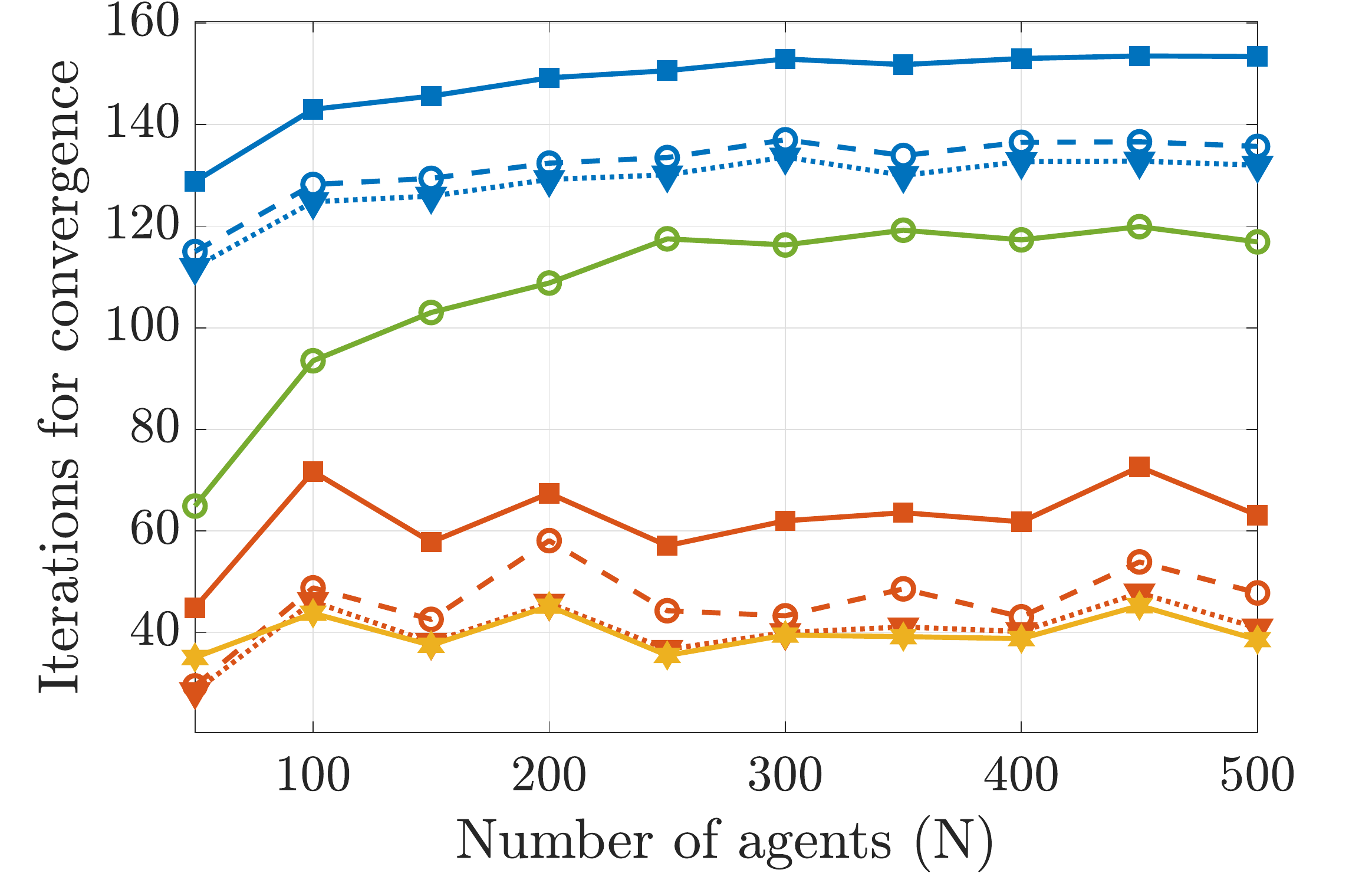}
\caption{
Homogeneous population of PEVs. The step sizes of pFB and cPPP have been set $1\%$ smaller than their theoretical upper bounds.
\label{fig:ALL_homo}
}
\end{subfigure}
\caption{ \small Iterations to achieve convergence for different population sizes. Each polygon is the average over 120 random simulations. Convergence is considered achieved when $\| \bs x^k- \bs x^*\| / \| \bs x^*\| \leq 10^{-6}$.}
\end{figure}

%===========================================================================================
\section{Conclusion and Outlook} \label{sec:Concl}

Generalized Nash equilibrium problems in monotone aggregative games can be efficiently solved via accelerated, semi-decentralized, single-layer, single-communication-per-iteration, fixed-step algorithms. For this class of equilibrium problems, the over-relaxation seems the most effective provably-convergent decentralized way to speed up convergence. The study of adaptive step sizes is left for future work.

%============================================================
% APPENDEIX
\appendix

\subsection{Sufficient conditions for cocoercivity of the pseudo-gradient}
\label{cond:suffCOCO}
In this appendix, we provide some sufficient conditions -- inspired by \cite[\S~III.B]{scutari2014real} -- for cocoercivity of the pseudo-gradient $F$ in \eqref{eq:F} based on properties of the functions $(f_i)_{i \in \mc I}$.

\smallskip
\begin{lemma}
\label{lem:suffCOCO}
For all agent $i\in \mc I$, assume that the function $h_i(\bs x): \bs x \mapsto f_i(x_i,\avg(\bs x))$ is twice continuously differentiable, and let the following conditions hold:
\begin{enumerate}[(i)]
\item $\nabla_{x_i} h_i(\bs x)$ is $\ell_i-$Lipschitz continuous on $\bs \Omega$;

\smallskip
\item there exists a positive constant $\mu_i$ such that
\end{enumerate}
\begin{align*}
\displaystyle \inf_{\bs x \in \bs \Omega} \text{eig}_{\min}(\nabla_{x_i}^2 h_i(\bs x)) \geq \sum_{j \in \mc I \setminus \{i\}} \sup_{\bs x \in \bs \Omega} \|\nabla^2_{x_j, x_i} h_i(\bs x) \| + \mu_i.
\end{align*}
Then, the pseudo-gradient $F(\bs x)=\col\big((\nabla_{x_i} h_i(\bs x))_{i \in \mc I} \big)$ in \eqref{eq:F} is $\gamma-$cocoercive, with $\gamma = \min_{i \in \mc I} \{\mu_i\} / (\max_{i \in \mc I} \{\ell_i\})^2$.
{\hfill $\square$}
\end{lemma}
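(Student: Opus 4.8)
The plan is to derive $\gamma$-cocoercivity of $F$ from two classical facts: a map that is $\mu$-strongly monotone and $\ell$-Lipschitz continuous on a convex set is $(\mu/\ell^2)$-cocoercive there (Remark~\ref{rem:SMONandLIP_coco}); and, since each $h_i$ is $C^2$ and $\bs\Omega=\prod_{i\in\mc I}\Omega_i$ is convex (Assumption~\ref{ass:CCF}), both Lipschitz continuity and strong monotonicity of the $C^1$ map $F=\col\big((\nabla_{x_i}h_i)_{i\in\mc I}\big)$ can be certified pointwise through its Jacobian $\mathrm{D}F$, whose $(i,j)$ block is $\nabla^2_{x_j,x_i}h_i$ and whose $(i,i)$ block is the Hessian $\nabla_{x_i}^2 h_i$. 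Accordingly, I would first show that $F$ is Lipschitz on $\bs\Omega$ with a constant controlled by the $\ell_i$'s, then that $F$ is $\mu$-strongly monotone on $\bs\Omega$ with $\mu=\min_{i\in\mc I}\mu_i$, and finally combine the two through Remark~\ref{rem:SMONandLIP_coco}.

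The Lipschitz bound is immediate from hypothesis (i): for $\bs x,\bs y\in\bs\Omega$,
\begin{equation*}
\|F(\bs x)-F(\bs y)\|^2=\sum_{i\in\mc I}\|\nabla_{x_i}h_i(\bs x)-\nabla_{x_i}h_i(\bs y)\|^2\le\Big(\sum_{i\in\mc I}\ell_i^2\Big)\|\bs x-\bs y\|^2,
\end{equation*}
which yields the Lipschitz constant entering $\gamma$ after straightforward estimates. For strong monotonicity I would use the mean-value identity along segments: with $\bs v:=\bs x-\bs y$ and $\bs z_t:=\bs y+t\bs v\in\bs\Omega$, $\langle F(\bs x)-F(\bs y),\bs v\rangle=\int_0^1\langle \mathrm{D}F(\bs z_t)\bs v,\bs v\rangle\,dt$, so it suffices to bound the quadratic form of $\mathrm{D}F(\bs z)$ from below, uniformly in $\bs z\in\bs\Omega$. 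Splitting it into diagonal and off-diagonal blocks,
\begin{align*}
\langle \mathrm{D}F(\bs z)\bs v,\bs v\rangle
&=\sum_{i\in\mc I}v_i^\top\nabla_{x_i}^2 h_i(\bs z)\,v_i+\sum_{i\in\mc I}\sum_{j\neq i}v_i^\top\nabla^2_{x_j,x_i}h_i(\bs z)\,v_j\\
&\ge\sum_{i\in\mc I}\text{eig}_{\min}\!\big(\nabla_{x_i}^2 h_i(\bs z)\big)\|v_i\|^2-\sum_{i\in\mc I}\sum_{j\neq i}\big\|\nabla^2_{x_j,x_i}h_i(\bs z)\big\|\,\|v_i\|\,\|v_j\|,
\end{align*}
one inserts hypothesis (ii) into the first sum and applies the Young inequality $\|v_i\|\|v_j\|\le\tfrac12(\|v_i\|^2+\|v_j\|^2)$ to the cross terms (using, crucially, the aggregative structure, by which the blocks $x_j$, $j\ne i$, enter $h_i$ only through $\avg(\bs x)$, so that $\nabla^2_{x_j,x_i}h_i$ is the same for all $j\ne i$), so that the sums $\sum_{j\ne i}\sup_{\bs x\in\bs\Omega}\|\nabla^2_{x_j,x_i}h_i(\bs x)\|$ in (ii) absorb the cross-term contributions; what remains is $\langle \mathrm{D}F(\bs z)\bs v,\bs v\rangle\ge\sum_{i\in\mc I}\mu_i\|v_i\|^2\ge(\min_{i\in\mc I}\mu_i)\|\bs v\|^2$, and integrating over $t$ gives $\langle F(\bs x)-F(\bs y),\bs x-\bs y\rangle\ge(\min_{i\in\mc I}\mu_i)\|\bs x-\bs y\|^2$.

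Combining the two estimates through Remark~\ref{rem:SMONandLIP_coco} then gives that $F$ is $\gamma$-cocoercive on $\bs\Omega$ with $\gamma=\min_{i\in\mc I}\mu_i/(\max_{i\in\mc I}\ell_i)^2$. I expect the delicate step to be the lower bound on $\langle \mathrm{D}F(\bs z)\bs v,\bs v\rangle$: hypothesis (ii) is a Gershgorin-type block diagonal dominance condition (a positive-definite comparison matrix with diagonal entries $\text{eig}_{\min}(\nabla_{x_i}^2 h_i)$ and off-diagonal entries $-\sup\|\nabla^2_{x_j,x_i}h_i\|$), and turning it into $\tfrac12(\mathrm{D}F(\bs z)+\mathrm{D}F(\bs z)^\top)\succeq(\min_i\mu_i)I$ for every $\bs z\in\bs\Omega$ requires care in how the row- and column-cross-terms of the (non-symmetric) Jacobian are paired — this is precisely where the special aggregative form of the $h_i$'s has to be exploited. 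The Lipschitz estimate and the final invocation of Remark~\ref{rem:SMONandLIP_coco} are routine, as is the passage from the Jacobian bound to monotonicity via the segment integral (legitimate since $\bs\Omega$ is convex).
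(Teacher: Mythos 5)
Your high-level route is the same as the paper's: establish that $F$ is Lipschitz and strongly monotone on $\bs\Omega$, then invoke Remark \ref{rem:SMONandLIP_coco}. The difference is in how strong monotonicity is extracted from condition (ii), and this is where your argument has a genuine gap. The paper does not work with the $nN\times nN$ Jacobian directly: it forms the $N\times N$ comparison matrix $\Upsilon_F$ in \eqref{eq:Ups} (diagonal entries $\inf_{\bs x}\text{eig}_{\min}(\nabla^2_{x_i}h_i)$, off-diagonal entries $-\sup_{\bs x}\|\nabla^2_{x_j,x_i}h_i\|$), notes that condition (ii) makes it positive definite by Gershgorin, and then cites \cite[Prop.~5.(c)]{scutari2014real}, which is precisely the result converting positive definiteness of this comparison matrix into $\mu$-strong monotonicity of $F$ with $\mu=\min_i\mu_i$. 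Your direct Jacobian/Young's-inequality argument does not close: after $\|v_i\|\|v_j\|\le\tfrac12(\|v_i\|^2+\|v_j\|^2)$, the coefficient of $\|v_i\|^2$ contains \emph{both} the row sum $\sum_{j\neq i}\|\nabla^2_{x_j,x_i}h_i\|$ (which (ii) controls) \emph{and} the column sum $\sum_{j\neq i}\|\nabla^2_{x_i,x_j}h_j\|$ (which (ii) does not control). Your appeal to the aggregative structure does not repair this: even if $\nabla^2_{x_j,x_i}h_i$ is the same block $B_i$ for all $j\neq i$, the column-$i$ blocks are $\nabla^2_{x_i,x_j}h_j$, i.e.\ they belong to the \emph{different} functions $h_j$, so the row sum $(N-1)\|B_i\|$ and the column sum $\sum_{j\neq i}\|B_j\|$ need not coincide. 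You correctly flag this as ``the delicate step,'' but flagging it is not resolving it; the missing idea is exactly the reduction to the block-norm comparison matrix and the cited result of Scutari et al.

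A second, smaller issue: your Lipschitz computation gives $\|F(\bs x)-F(\bs y)\|\le\big(\sum_{i\in\mc I}\ell_i^2\big)^{1/2}\|\bs x-\bs y\|$, and $\big(\sum_i\ell_i^2\big)^{1/2}\ge\max_i\ell_i$, so ``straightforward estimates'' from your bound yield $\gamma=\min_i\mu_i/\sum_i\ell_i^2$, which is weaker than the claimed $\gamma=\min_i\mu_i/(\max_i\ell_i)^2$. The paper simply asserts $\ell=\max_i\ell_i$ from condition (i); if you want the stated constant you must either justify that sharper Lipschitz bound or accept the weaker cocoercivity modulus, but as written your own estimate does not deliver the constant in the lemma.
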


\smallskip
\begin{proof}
Define the matrix $\Upsilon_F \in \R^{N \times N}$, with entries
\begin{align}
\label{eq:Ups}
[\Upsilon_F]_{i,j} := \begin{cases}
\inf_{\bs x \in \bs \Omega} \text{eig}_{\min}(\nabla_{x_i}^2 h_i(\bs x)), & \text{if } i =j,\\
-\sup_{\bs x \in \bs \Omega} \|\nabla^2_{x_j, x_i} h_i(\bs x) \|, & \text{otherwise}.
\end{cases}
\end{align}
Under the conditions in Lemma~\ref{lem:suffCOCO}~(ii), it follows by the Gershgorin's circle theorem, e.g.  \cite[Th. 2]{feingold1962block}, that $\Upsilon_F $ is positive definite with $\text{eig}_{\min}(\Upsilon_F) = \min_{i \in \mc I} \{\mu_i\}$.
In turn, it follows from \cite[Prop.~5.(c)]{scutari2014real} that the pseudo-gradient $F$ is $\mu-$strongly monotone, with $\mu=\min_{i \in \mc I} \{\mu_i\}$. Moreover, under the conditions in Lemma \ref{lem:suffCOCO}~(i) it follows that $F(\bs x)=\col\big((\nabla_{x_i} h_i(\bs x))_{i \in \mc I} \big)$ is $\ell-$Lipschitz continuous with $\ell= \max_{i \in \mc I} \{\ell_i\}$. Finally, from Remark \ref{rem:SMONandLIP_coco} it follows that $F$ is $\gamma-$cocoercive with $\gamma=\min_{i \in \mc I} \{\mu_i\}/(\max_{i \in \mc I} \{\ell_i\})^2$.
%since $F$ is $(\min_{i \in \mc I} \{\mu_i\})-$strongly monotone and $(\max_{i \in \mc I} \{\ell_i\})-$Lipschitz continuous.
\end{proof}

\subsection{Proof of Theorem \ref{th:FoRB}}
\label{proof:th:FoRB}

To establish global convergence, we show that
\begin{enumerate}[(i)]
\item Algorithm $5$ corresponds to the (preconditioned) inertial FoRB splitting method \cite[Eq. (4.12)]{malitsky2020forward} in \eqref{eq:FP-FoRB};

\item If the step sizes $\{\alpha_i\}_{i \in \mc I}$, $\beta$ and the extrapolation parameter $\theta$ are chosen as in Algorithm $5$, then the assumptions of \cite[Corollary 4.4]{malitsky2020forward} are satisfied, hence $( \col(\bs x^k, \lambda^k))_{k \in \bb N}$ globally converges to some $\col(\bs x^*, \lambda^*) \in \zer(T) \neq \varnothing$, where $\bs x^*$ is a v-GNE.
\end{enumerate}
(i): Let us recast Algorithm $5$ in a more compact form as
\begin{align} 
\nonumber
\bs x^{k+1} &= \diag(\prox_{\alpha_1 g_1 + \iota_{ \Omega_1}}, \ldots, \prox_{\alpha_N g_N + \iota_{ \Omega_N}})\\
\label{eq:xFoRB1}
& \quad  \circ
\big( \tilde{\bs x}^k - \bar \alpha ( 2F(\bs x^{k}) - F(\bs x^{k-1})+ A^\top \lambda^k) \big ),  \\
\label{eq:lamFoRB1}
\lambda^{k+1} &= \proj_{\R^m_{\geq 0}} \big(\tilde \lambda^k + \beta (2A \bs x^{k+1}- A\bs x^k -b) \big),
\end{align}
where $\tilde{\bs x}^k := {\bs x}^k + \theta ({\bs x}^k- {\bs x}^{k-1})$ and $\tilde{ \lambda}^k :=  { \lambda}^k + \theta ({ \lambda}^k- { \lambda}^{k-1})$. 
Since $\diag(\prox_{\alpha_1 g_1 + \iota_{ \Omega_1}}, \ldots, \prox_{\alpha_N g_N + \iota_{ \Omega_N}})=(\Id + \nc_{\bs \Omega} + \bar \alpha G)^{-1}$, it follows by \eqref{eq:xFoRB1} that $(\Id + \nc_{\bs \Omega} + \bar \alpha G)  (\bs x^{k+1}) \in \tilde{\bs x}^k - \bar \alpha ( 2F(\bs x^{k}) - F(\bs x^{k-1})+ A^\top \lambda^k)
$, which leads to
\begin{multline} \label{eq:Alg2-X}
-(2F(\bs x^{k}) - F(\bs x^{k-1})) \in
(\nc_{\bs \Omega} + G)(\bs x^{k+1}) 
+ A^\top \lambda^{k+1} \\
+ \bar \alpha^{-1}(\bs x^{k+1}-\tilde{\bs x}^k) - A^\top(\lambda^{k+1} - \lambda^{k})
\end{multline}
where we used $\bar \alpha^{-1}  \nc_{\bs \Omega}(\bs x^{k+1}) =  \nc_{\bs \Omega} (\bs x^{k+1})$. Equivalently,
it follows from \eqref{eq:lamFoRB1} that $(\Id+\nc_{\R^{m}_{\geq 0}})({ \lambda}^{k+1}) \in \tilde \lambda^k +  \beta \frac{1}{N}(2  A \bs x^{k+1} -  A \bs x^k - b )$, which leads to
\begin{multline} \label{eq:Alg2-L}
\textstyle
-b \in \nc_{\R^{mN}_{\geq 0}}({ \lambda}^{k+1}) - A {\bs x}^{k+1} \\
\textstyle
- A({\bs x}^{k+1}-\bs x^{k}) +  N \beta^{-1}({ \lambda}^{k+1}- \tilde \lambda^{k} ).
\end{multline}
Let $\bs \omega^k := \col(\bs x^k, \lambda^k)$ be the stacked vector of the iterates and $\tilde{\bs \omega}^k = \bs \omega^{k} + \theta(\bs \omega^{k}-\bs \omega^{k-1})$. The inclusions in \eqref{eq:Alg2-X}-\eqref{eq:Alg2-L} can be cast in a more compact form as 
\begin{align} \nonumber
%\label{eq:Alg2-incl-comp}
\textstyle
-(2T_1(\bs \omega^k)-T_1(\bs \omega^{k-1})) \in T_2({\bs \omega}^{k+1}) +\Phi ({\bs \omega}^{k+1}- \tilde{\bs \omega}^k),
\end{align}
where $T_1$, $T_2$ and $\Phi$ as in \eqref{eq:T1}, \eqref{eq:T2} and \eqref{eq:SandPhi}, respectively.
By making ${\bs \omega}^{k+1}$ explicit in the last inclusion, we obtain
\begin{multline}  \label{eq:fixPointFoRBproof}
\bs \omega ^{k+1} = (\Id + \Phi^{-1} T_2)^{-1}\\
\circ (\tilde{\bs \omega}^k 
- 2 \Phi^{-1}T_1(\bs \omega ^{k})+ \Phi^{-1} T_1(\bs \omega ^{k-1})),
\end{multline}
which corresponds to \eqref{eq:FP-FoRB}, thus concluding the proof.

\smallskip
(ii): 
Before studying the convergence of iteration \eqref{eq:fixPointFoRBproof}, we show that, if the step sizes are chosen as in Algorithm $5$, then the preconditioning matrix $\Phi$ is positive definite.

\begin{lemma} \label{lem:PMphiLip}
Let $\{\alpha_i\}_{i \in \mc I}$ and $\beta$ be set as in Algorithm $5$. Then,  the following statements hold:
\begin{enumerate}[(i)]
\item $\Phi - \delta I \succeq 0$;

\item $\| \Phi^{-1} \| \leq \delta^{-1}$.
{\hfill $\square$}
\end{enumerate}
\end{lemma}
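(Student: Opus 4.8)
The plan is to prove both statements by a direct lower bound on the quadratic form associated with $\Phi$, using only the step-size bounds prescribed in Algorithm~$5$. First I would note that $\Phi$ in \eqref{eq:Phi} is symmetric (the block $\bar\alpha^{-1}\otimes I_n$ is symmetric since $\bar\alpha$ is diagonal, and the off-diagonal blocks are $-A^\top$ and $-A=(-A^\top)^\top$), so for (i) it is enough to show $\bs\omega^\top(\Phi-\delta I)\bs\omega\ge 0$ for every $\bs\omega=\col(\bs x,\lambda)\in\R^{nN}\times\R^m$.

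Expanding this quadratic form blockwise gives
\begin{equation*}
\bs\omega^\top(\Phi-\delta I)\bs\omega=\sum_{i=1}^N(\alpha_i^{-1}-\delta)\norm{x_i}^2-2\lambda^\top A\bs x+(N\beta^{-1}-\delta)\norm{\lambda}^2 .
\end{equation*}
The step-size bound $0<\alpha_i\le(\norm{A_i}+\delta)^{-1}$ yields $\alpha_i^{-1}-\delta\ge\norm{A_i}$, and $0<\beta\le(\tfrac1N\sum_i\norm{A_i}+\tfrac1N\delta)^{-1}$ yields $N\beta^{-1}-\delta\ge\sum_i\norm{A_i}$. For the cross term I would write $A\bs x=\sum_{i}A_i x_i$ and combine Cauchy--Schwarz with Young's inequality, $2ab\le a^2+b^2$ applied to $a=\sqrt{\norm{A_i}}\norm{x_i}$ and $b=\sqrt{\norm{A_i}}\norm{\lambda}$, to obtain
\begin{equation*}
2\lambda^\top A\bs x\le 2\sum_{i}\norm{A_i}\norm{\lambda}\norm{x_i}\le\sum_{i}\norm{A_i}\norm{x_i}^2+\Big(\sum_{i}\norm{A_i}\Big)\norm{\lambda}^2 .
\end{equation*}
Substituting the three estimates, the nonnegative terms cancel exactly the upper bound on the cross term, so $\bs\omega^\top(\Phi-\delta I)\bs\omega\ge 0$, i.e., $\Phi\succeq\delta I$, which is (i).

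For (ii), since $\delta>0$, part (i) gives $\Phi\succeq\delta I\succ 0$, so $\Phi$ is invertible and $\Phi^{-1}$ is symmetric positive definite. Monotonicity of the matrix inverse on positive definite matrices then gives $0\prec\Phi^{-1}\preceq\delta^{-1}I$, hence $\norm{\Phi^{-1}}=\text{eig}_{\max}(\Phi^{-1})\le\delta^{-1}$, which is (ii).

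I do not expect a genuine obstacle here: the whole argument is a short quadratic-form estimate. The only point that needs care is the weighting in Young's inequality: the product $\norm{A_i}\norm{\lambda}\norm{x_i}$ must be split with weight $\norm{A_i}$ precisely so that the resulting quadratic terms are exactly those the diagonal blocks can absorb after invoking the step-size bounds; with this weighting the cancellation is exact and the remaining steps are immediate.
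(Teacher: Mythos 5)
Your proof is correct, but it takes a different route from the paper's. The paper establishes $\Phi \succeq \delta I$ by invoking the generalized (block) Gershgorin circle theorem \cite[Th.~2]{feingold1962block}: every eigenvalue $\mu$ of $\Phi$ satisfies $\mu \geq \alpha_i^{-1} - \norm{A_i^\top}$ for some $i$ or $\mu \geq N\beta^{-1} - \sum_{j}\norm{A_j^\top}$, and the step-size bounds make every such lower bound at least $\delta$. You instead bound the quadratic form $\bs\omega^\top(\Phi-\delta I)\bs\omega$ directly, splitting the cross term $2\lambda^\top A\bs x$ via Cauchy--Schwarz and a Young inequality weighted by $\norm{A_i}$ so that the diagonal surpluses $\alpha_i^{-1}-\delta\ge\norm{A_i}$ and $N\beta^{-1}-\delta\ge\sum_i\norm{A_i}$ absorb it exactly; this is a correct and complete argument (the weighting is indeed the only delicate point, and you handle it properly). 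Your version is more elementary and self-contained --- it avoids citing the block Gershgorin result, which is essentially proved by the same kind of estimate anyway --- while the paper's version is shorter on the page and reuses a tool it also needs in Appendix~A and in Lemma~\ref{lem:posDefCIPPP}. For part (ii) the two arguments are equivalent: you pass through operator monotonicity of the inverse to get $\Phi^{-1}\preceq\delta^{-1}I$, whereas the paper writes $\norm{\Phi^{-1}} = 1/\text{eig}_{\min}(\Phi) \leq \delta^{-1}$ directly; both are immediate consequences of (i).
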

\begin{proof}
(i): By the generalized Gershgorin circle theorem \cite[Th. 2]{feingold1962block}, each eigenvalue $\mu$ of the matrix $\Phi$ in \eqref{eq:SandPhi} satisfies at least one of the following inequalities:
\begin{align} \label{eq:GGCT1}
\mu & \geq \alpha_i^{-1} - \| A_{i}^\top\| , \quad  &\forall i \in \mc I,\\
\mu & \geq \textstyle N\beta^{-1} -  \sum_{j = 1}^N \| A_{j}^\top\|. \quad  &
\label{eq:GGCT3}
\end{align}
Hence, if we set the step sizes $\{ \alpha_i \}_{i \in \mc I}, \beta$ as in Algorithm $5$, the inequalities \eqref{eq:GGCT1}-\eqref{eq:GGCT3} yield to $\mu \geq \delta $, where $\delta>0 $ by design choice. It follows that the smallest eigenvalue of $\Phi$, i.e., $\text{eig}_{\min}(\Phi)$, satisfies $\text{eig}_{\min}(\Phi) \geq \delta >0$. Hence, $\Phi-\delta I \succeq 0$.\\
(ii): Let $\text{eig}_{\max}(\Phi)$ be the largest eigenvalue of $\Phi$. We have that $\text{eig}_{\max}(\Phi)\ge \text{eig}_{\min}(\Phi) \geq \delta $. Moreover, $\| \Phi \|=\text{eig}_{\max}(\Phi) \ge \text{eig}_{\min}(\Phi) = \frac{1}{\| \Phi^{-1} \|} \geq \delta$. Hence $\| \Phi^{-1} \| \leq \delta^{-1}$. 
\end{proof}

\smallskip
Since $\Phi^{-1}$ is $\delta^{-1}-$Lipschitz, by Lemma \ref{lem:PMphiLip} (ii), and $T_1$ is $\ell-$Lipschitz, by Assumption \ref{ass:monot}, then their composition, i.e., $\Phi^{-1} \circ T_1$, is $\tau-$Lipschitz continuous, with $\tau:=\delta^{-1} \ell < (1-3\theta)/2$, since $\delta > 2 \ell/(1-3\theta)$, by design choice.

The fixed-point iteration \eqref{eq:fixPointFoRBproof}, that corresponds to Algorithm $5$ by the first part of this proof, is the inertial FoRB splitting algorithm on the mappings $\Phi^{-1}T_1$ and $\Phi^{-1}T_2$.
The convergence of \eqref{eq:fixPointFoRBproof} to some $\bs \omega^*:=\col(\bs x^*,\bs \lambda^*) \in \zer(T_1+T_2)$ follows by \cite[Corollary 4.4, Remark 2.7]{malitsky2020forward}, since
$\Phi^{-1}T_1$ and $\Phi^{-1}T_2$ are maximally monotone in the $\Phi-$induced norm and $\Phi^{-1}T_1$ is $\tau-$Lipschitz continuous, with $\tau< (1-3\theta)/2$. To conclude, we note that $\bs \omega^* \in \zer(\Phi^{-1}T_1+ \Phi^{-1} T_2)= \zer(T)$, since $\Phi\succ 0$, by Lemma \ref{lem:PMphiLip} (i), and $T_1+T_2=T$. Since the limit point $\bs \omega^* \in \zer(T) \neq \varnothing$, then $\bs x^*$ is a v-GNE of the game in \eqref{eq:Game}, by Proposition \ref{pr:UvGNE}, thus concluding the proof.
{\hfill $\blacksquare$}

\subsection{Proof of Theorem \ref{th:CPPP}}
\label{proof:Th.CPPP}
To establish global convergence, we show that
\begin{enumerate}[(i)]
\item Algorithm $6$ corresponds to the inertial proximal-point method \cite[Th.~2.1]{alvarez2001inertial} in \eqref{eq:CPPP};

\item If the step sizes $\{\alpha_i\}_{i \in \mc I}$, $\beta$ and the inertial parameters $\theta^k$ are chosen as in Algorithm $6$, then the assumptions of \cite[Th.~2.1, Prop.~2.1]{alvarez2001inertial} are satisfied, hence $( \col(\bs x^k, \lambda^k))_{k \in \bb N}$ globally convergences to some $\col(\bs x^*, \lambda^*) \in \zer(T)$, where $\bs x^*$ is a v-GNE.
\end{enumerate}

\smallskip
(i): With some cosmetic manipulations, we can rewrite the local primal update of agent $i$ as the solution to
\begin{align*} 
x_i^{k+1} = \textstyle \underset{z \in {\Omega}_i}{\argmin} \, 
J_i \left( z, \tilde x_{-i}^k \right)
+{(A_i^\top \tilde \lambda_i^k)}^\top z +  \frac{1}{2\alpha_i} \norm{z-\tilde x_i^k}^2,
\end{align*}
with $J_i$ as in \eqref{eq:CF-SS}.
Equivalently, $x_i^{k+1}$ must satisfy 
\begin{multline*} \textstyle
\0_n \in \partial_{x_i}\big(
J_i \left( x^{k+1}_i, \tilde x_{-i}^k \right)
+{(A_i^\top \tilde \lambda_i^k)}^\top x^{k+1}_i\\
\textstyle
 +  \frac{1}{2\alpha_i} \norm{x^{k+1}_i -\tilde x_i^k}^2
\big).
\end{multline*}
Since $\partial_{x_i}
J_i ( x^{k+1}_i, \tilde x_{-i}^k ) = \partial_{x_i} g_i(x_i^{k+1}) + \frac{1}{N} \sum_{j \neq i}^N  C\tilde  x_j^k+ \frac{2}{N} C x_i^{k+1}$, then the previous inclusion can be rewritten as
\begin{multline} \textstyle \label{eq:inclCIPPP}
\0_n \in \partial_{x_i} g_i(x_i^{k+1}) + \frac{2}{N} C x_i^{k+1}
 +\frac{1}{N} [(\1_N^\top \otimes C){\bs x}^{k+1} -C x_i^{k+1}] \\
\textstyle
+ A_i^\top  \lambda_i^{k+1}
 +  \frac{1}{\alpha_i} (x^{k+1}_i -\tilde x_i^k) - \frac{1}{N} [(\1_N^\top \otimes C)({\bs x}^{k+1}-\tilde{\bs x}^k) \\
 \textstyle
 +  C ( x_i^{k+1}-\tilde x_i^k)]  - A^\top_i(\lambda^{k+1}- \lambda^k).
\end{multline}
By stacking-up the inclusions \eqref{eq:inclCIPPP}, for all $i \in \mc I$, we obtain
\begin{multline}\label{eq:bla}
\textstyle
\0_{nN} \in G(\bs x^{k+1}) + \frac{2}{N}(I_N \otimes C) \bs x^{k+1}  -\\
\textstyle
 \frac{1}{N} ((I_N - \1 \1^\top )\otimes C) \bs x^{k+1} + A^\top \bs \lambda^{k+1} 
+ \bar \alpha^{-1}(\bs x^{k+1} - \tilde{\bs x}^k)\\
\textstyle
 + \frac{1}{N} ((I_N - \1 \1^\top )\otimes C) (\bs x^{k+1}- \tilde{\bs x}^{k}) - A^\top (\lambda^{k+1}-\tilde  \lambda^k),
\end{multline}
where the first 3 terms on the right-hand side correspond to the pseudo-subdifferential mapping $P(\bs x^{k+1})$, i.e., $G(\bs x^{k+1}) + \frac{2}{N}(I_N \otimes C) (\bs x^{k+1})  - \frac{1}{N} ((I_N - \1 \1^\top )\otimes C) (\bs x^{k+1})= P(\bs x^{k+1})$. 

It follows by the dual update in Algorithm $6$ that $(\Id+\nc_{\R^{m}_{\geq 0}})({ \lambda}^{k+1}) \in \lambda^k + \beta \frac{1}{N}(2  A \bs x^{k+1} -  A \tilde{\bs x}^k - b )$, which yields
\begin{multline} \label{eq:Alg7_dual}
\textstyle
\0_m \in \nc_{\R^{mN}_{\geq 0}}({\lambda}^{k+1}) - (A {\bs x}^{k+1}-b) \\
\textstyle
- A({\bs x}^{k+1}-\tilde{\bs x}^{k}) + N \beta^{-1}({ \lambda}^{k+1}-\tilde{ \lambda}^{k} ).
\end{multline}

Let $\bs \omega^k := \col(\bs x^k, \lambda^k)$ be the stacked vector of the iterates. The inclusions in \eqref{eq:bla}-\eqref{eq:Alg7_dual} can be cast in a compact form as 
\begin{align} \nonumber
%\label{eq:Alg2-incl-comp}
\textstyle
\0 \in T({\bs \omega}^{k+1}) +\Phi_{\text C} ({\bs \omega}^{k+1}- \tilde{\bs \omega}^k),
\end{align}
where $T$ and $\Phi_\text{C}$ as in \eqref{eq:T} and \eqref{eq:Phi_C}, respectively. 
By making ${\bs \omega}^{k+1}$ explicit in the last inclusion, we obtain
\begin{equation}  \label{eq:fixPointCIPPPproof}
\bs \omega ^{k+1} = (\Id + \Phi_{\text C}^{-1} T)^{-1}(\tilde{\bs \omega}^k),
\end{equation}
where the auxiliary updates can be cast in a compact form as
\begin{equation} \label{eq:fixPointCIPPPproof_2}
\tilde{\bs \omega}^k = \bs \omega^{k} + \theta^k (\bs \omega^{k}-\bs \omega^{k-1}).
\end{equation}
By combining \eqref{eq:fixPointCIPPPproof} and \eqref{eq:fixPointCIPPPproof_2}, we obtain the fixed-point iteration in \eqref{eq:CPPP},
that corresponds to the iteration in \cite[Th.~2.1]{alvarez2001inertial} applied on $\Phi_{\text C}^{-1}T$ and, thus, concludes the proof.

\smallskip
(ii): The following Lemma shows that, if the step sizes are chosen as in Algorithm $6$, then the preconditioning matrix $\Phi_{\text C}$ is positive definite.

\begin{lemma} \label{lem:posDefCIPPP}
Let $\{\alpha_i\}_{i \in \mc I}$ and $\beta$ be set as in Algorithm $6$, then, $\Phi_{\text{C}}  \succ 0$.
{\hfill $\square$}
%\end{enumerate}
\end{lemma}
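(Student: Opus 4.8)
The plan is to follow the same route as in the proof of Lemma~\ref{lem:PMphiLip}: apply the generalized (block) Gershgorin circle theorem \cite[Th.~2]{feingold1962block} directly to the \emph{symmetric} matrix $\Phi_{\text{C}}$ in \eqref{eq:Phi_C}, rather than trying to estimate $\Phi$ and the correction term separately. First I would rewrite $\Phi_{\text{C}}$ with respect to the natural block partition into $N{+}1$ diagonal blocks of sizes $n,\dots,n,m$, associated with the variables $(x_1,\dots,x_N,\lambda)$. Since $[\,I_N-\1_N\1_N^\top\,]_{ii}=0$ and $[\,I_N-\1_N\1_N^\top\,]_{ij}=-1$ for $i\neq j$, the additive term $\tfrac1N(I_N-\1_N\1_N^\top)\otimes C$ contributes nothing to the diagonal blocks and contributes exactly $-\tfrac1N C$ to each off-diagonal block $(i,j)$ with $i,j\in\mc I$, $i\neq j$. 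Hence the diagonal blocks of $\Phi_{\text{C}}$ are $\alpha_i^{-1}I_n$ (for $i\in\mc I$) and $N\beta^{-1}I_m$, while the off-diagonal blocks are $-\tfrac1N C$ (between $x_i$ and $x_j$) and $-A_i^\top$ (between $x_i$ and $\lambda$); using $C=C^\top$ from \eqref{eq:CF-SS}, $\Phi_{\text{C}}$ is symmetric, so all its eigenvalues are real.

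Since each diagonal block is a positive multiple of an identity matrix, the block Gershgorin estimate then gives that every eigenvalue $\mu$ of $\Phi_{\text{C}}$ satisfies at least one of
\begin{align*}
\mu &\geq \alpha_i^{-1} - \textstyle\frac{N-1}{N}\norm{C} - \norm{A_i}, \quad \forall i \in \mc I, \\
\mu &\geq N\beta^{-1} - \textstyle\sum_{j=1}^{N} \norm{A_j},
\end{align*}
where the radius $\tfrac{N-1}{N}\norm{C}$ comes from summing $\norm{-\tfrac1N C}=\tfrac1N\norm{C}$ over the $N-1$ indices $j\neq i$, and $\norm{A_i^\top}=\norm{A_i}$. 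By the design choices in the Initialization of Algorithm~6, namely $\alpha_i^{-1}>\norm{A_i}+\tfrac{N-1}{N}\norm{C}$ for all $i\in\mc I$ and $N\beta^{-1}>\sum_{j=1}^N\norm{A_j}$, all the right-hand sides are strictly positive, whence $\text{eig}_{\min}(\Phi_{\text{C}})>0$, i.e., $\Phi_{\text{C}}\succ 0$.

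The point that needs care — and what makes the direct application to $\Phi_{\text{C}}$ necessary — is that the correction $\tfrac1N(I_N-\1_N\1_N^\top)\otimes C$ is \emph{indefinite} in general (it has a negative eigenvalue in the $\1_N$ direction whenever $C$ has a positive eigenvalue), so $\Phi_{\text{C}}$ cannot be handled as a sum of positive semidefinite matrices; it is precisely the vanishing of this term on the diagonal blocks, together with its $\tfrac1N\norm{C}$-bounded off-diagonal blocks, that the Gershgorin argument exploits. This is also the reason why the bound on $\alpha_i$ in Algorithm~6 carries the extra $\tfrac{N-1}{N}\norm{C}$ summand with respect to the one in Algorithm~5. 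The remaining steps (verifying symmetry and reading off the block structure) are routine.
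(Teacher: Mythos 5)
Your proof is correct and follows exactly the route the paper intends: the paper omits the argument, stating only that it ``follows the same technical reasoning of the proof of Lemma~\ref{lem:PMphiLip}~(i)'', i.e., the block Gershgorin estimate, and you have carried that out correctly for $\Phi_{\text{C}}$, noting that the correction term vanishes on the diagonal blocks and contributes $-\tfrac1N C$ off-diagonally. The only caveat is that you (reasonably) read the step-size bound in Algorithm~6 as $\alpha_i < \bigl(\norm{A_i}+\tfrac{N-1}{N}\norm{C}\bigr)^{-1}$, correcting an apparent missing inverse in the paper's initialization line.
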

\begin{proof}
This proof follows the same technical reasoning of the proof of Lemma \ref{lem:PMphiLip} (i) and, thus, is omitted.
\end{proof}

The fixed-point iteration \eqref{eq:CPPP}, that corresponds to Algorithm $6$ by the first part of this proof, is the inertial proximal-point algorithm applied on the operator $\Phi_{\text C}^{-1}T$.
The convergence of \eqref{eq:CPPP} to some $\bs \omega^*=\col(\bs x^*,\bs \lambda^*) \in \zer(\Phi_{\text C}^{-1} T)$ follows by \cite[Th.~2.1, Prop.~2.1]{alvarez2001inertial}, since
$\Phi_{\text C}^{-1}T$ is maximally monotone in the $\Phi_{\text C}-$induced norm and $0 \leq \theta^k \leq \theta^{k+1} \leq \bar \theta < 1/3$, for all $k>0$. To conclude, we note that $\bs \omega^* \in \zer(\Phi_{\text C}^{-1}T)= \zer(T)$, since $\Phi_{\text C} \succ 0$, by Lemma \ref{lem:posDefCIPPP}. Since the limit point $\bs \omega^* \in \zer(T) \neq \varnothing$, then $\bs x^*$ is a v-GNE of the game in \eqref{eq:Game}, by Proposition \ref{pr:UvGNE}, concluding the proof.
{\hfill $\blacksquare$}

%==============================================

\balance
\bibliographystyle{IEEEtran}
\bibliography{IEEEfull,biblio,library}

%===========================================================================================

\end{document}